\theoremstyle{plain}
\newtheorem{thm}{Theorem}[section]
\newtheorem{lem}[thm]{Lemma}
\newtheorem{cor}[thm]{Corollary}
\newtheorem*{thm*}{Theorem}
\theoremstyle{definition}
\newtheorem{ex}[thm]{Example}
\newtheorem{rem}[thm]{Remark}
\newcommand{\Z}{\mathbb{Z}}
\newcommand{\R}{\mathbb{R}}
\renewcommand{\emptyset}{\varnothing}
\DeclareMathOperator{\conv}{\mathrm{conv}}
\begin{document}
% -------------------------------------------------------------------%

\title{Covering and labeling generalizations of the Borsuk--Ulam theorem}
% -------------------------------------------------------------------%

% -------------------------------------------------------------------%

\author[Frick]{Florian Frick}
\address[FF]{Dept.\ Math.\ Sciences, Carnegie Mellon University, Pittsburgh, PA 15213, USA}
\email{frick@cmu.edu} 

\author[Wellner]{Zoe Wellner}
\address[ZW]{School of Math. and Stat.\ Sciences, Arizona State University, Tempe, AZ 85287, USA}
\email{zwellner@asu.edu}

\thanks{FF and ZW were supported by NSF CAREER Grant DMS 2042428.}

\date{\today}

\begin{abstract}
\small 
We prove multiple generalizations of Fan's combinatorial labeling result for sphere triangulations. This can be seen as a comprehensive extension of the Borsuk--Ulam theorem. 
In typical applications, the Borsuk--Ulam theorem gives complexity bounds in a suitable sense, whereas our extension additionally provides insight into the structure of objects satisfying the complexity bound.
This structure is governed by order types of finite point sets in Euclidean space and more generally by the intersection combinatorics of faces under continuous maps from the simplex.
We develop some of those applications for sphere coverings, Kneser-type colorings, Hall-type results for hypergraphs, and hyperplane mass partitions, among other consequences. We provide a new proof of the topological Hall theorem and extend it into a result that simultaneously generalizes hypergraph Hall theorems and topological lower bounds for chromatic numbers.
\end{abstract}

\maketitle

% -------------------------------------------------------------------%

\section{Introduction}

The Borsuk--Ulam theorem~\cite{Borsuk1933} has found numerous applications across mathematical disciplines from geometric partitioning results in convex geometry~\cite{roldansoberon2022survey}, chromatic numbers in combinatorics~\cite{matousek2004topological}, consensus-halving and fair division~\cite{simmons2003consensus}, fixed-point theorems in topology and non-linear analysis~\cite{su1997borsuk, volovikov2008borsuk}, as well as the minimax variational inequalities frequently used in game theory derived from these fixed point theorems~\cite{ bartsch2006topological,fan1972minimax}, existence results for solutions of nonlinear PDEs~\cite{rabinowitz1982multiple}, dissimilarity~\cite{GH-BU-VR, lim2021gromov} and distortion~\cite{badoiu2005approximation} in metric geometry, and inscribability~\cite{aslam2020splitting} and incidence~\cite{Bajmoczy1979} problems in geometric topology, to problems concerning algorithmic complexity of approximate graph coloring~\cite{austrin2020improved} and PAC learning~\cite{chase2024local, hatami2023borsuk} in theoretical computer science. See Matou\v sek's book for some applications~\cite{Matousek2003book}. The Borsuk--Ulam theorem states that any continuous map $f\colon S^d \to \R^d$ defined on the $d$-sphere~$S^d$ has a zero, provided that $f$ is \emph{odd}, that is, $f(-x) = -f(x)$ for all $x\in S^d$. In typical applications of this result, the zero of~$f$ corresponds to a desired solution of the problem. Fan proved a far-reaching generalization of the Borsuk--Ulam theorem that can be phrased as follows:

\begin{thm}[Fan~\cite{Fan1952}]
\label{thm:fan}
    Let $m$ be a positive integer, and let $A_1, \dots, A_m \subset S^d$ be closed sets such that $A_i \cap (-A_i) =\emptyset$ for all $i \in [m]$ and $\bigcup_i A_i \cup \bigcup_i (-A_i) = S^d$. Then there are indices $i_1 < i_2 < \dots < i_{d+1}$ such that $\bigcap_{j = 1}^{d+1} (-1)^j A_{i_j} \ne \emptyset$.
\end{thm}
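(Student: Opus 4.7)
The plan is to reduce Theorem~\ref{thm:fan} to Ky Fan's purely combinatorial antipodal labeling lemma for triangulated spheres, via a mesh-refinement and compactness argument. The hypothesis $A_i \cap (-A_i) = \emptyset$ combined with compactness of $S^d$ gives $\delta := \min_{i \in [m]} \dist(A_i, -A_i) > 0$. I would pick an antipodally symmetric simplicial triangulation $T$ of $S^d$ in which every simplex has diameter less than $\delta$, and define a labeling $\lambda \colon V(T) \to \{\pm 1, \dots, \pm m\}$ as follows. On a fundamental domain for the antipodal action on $V(T)$, pick for each vertex $v$ some $i(v) \in [m]$ with $v \in A_{i(v)} \cup (-A_{i(v)})$ (possible by the covering hypothesis), and set $\lambda(v) = +i(v)$ if $v \in A_{i(v)}$ and $\lambda(v) = -i(v)$ otherwise; extend by $\lambda(-v) = -\lambda(v)$. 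The hypothesis $A_i \cap (-A_i) = \emptyset$ makes this unambiguous, and by construction $v \in \sign(\lambda(v))\cdot A_{|\lambda(v)|}$ for every vertex $v$. No edge of $T$ has complementary labels: if $\lambda(u) = k$ and $\lambda(v) = -k$, then $u \in A_k$ and $v \in -A_k$, so $\|u-v\| \ge \dist(A_k, -A_k) \ge \delta$, contradicting that $\{u,v\}$ spans a simplex of $T$.

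Next I would invoke Fan's combinatorial labeling lemma: every antipodal labeling of an antipodally symmetric triangulation of $S^d$ by $\{\pm 1, \dots, \pm m\}$ without complementary edges produces a $d$-simplex $\sigma$ whose labels, listed in order of increasing absolute value, form an alternating sign pattern $\epsilon k_0, -\epsilon k_1, \epsilon k_2, \dots, (-1)^d\epsilon k_d$ for some sign $\epsilon \in \{\pm1\}$ and some $k_0 < k_1 < \dots < k_d$ in $[m]$. Replacing $\sigma$ by its antipode $-\sigma$ (another simplex of $T$) if necessary, and using $\lambda(-v) = -\lambda(v)$, we may arrange $\epsilon = -1$. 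Then the vertex of $\sigma$ whose label has absolute value $k_{j-1}$ has sign $(-1)^j$ and therefore lies in $(-1)^j A_{k_{j-1}}$. Writing $i_j := k_{j-1}$, the $d+1$ vertices of $\sigma$ lie one in each $(-1)^j A_{i_j}$ with $i_1 < \dots < i_{d+1}$.

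Finally I would pass to a limit over a sequence of antipodally symmetric triangulations $T_n$ whose mesh tends to zero. Each produces a simplex $\sigma_n$ witnessing some $(d{+}1)$-subset $I_n = \{i_1^{(n)} < \dots < i_{d+1}^{(n)}\} \subseteq [m]$; since there are only finitely many such subsets, one subset $I = \{i_1 < \dots < i_{d+1}\}$ recurs infinitely often. By compactness of $S^d$ some vertex of the corresponding $\sigma_n$ has a convergent subsequence with limit $x$, and since $\diam(\sigma_n) \to 0$ every vertex converges to the same $x$. Closedness of each $A_i$ then gives $x \in (-1)^j A_{i_j}$ for all $j = 1, \dots, d+1$, and hence $x \in \bigcap_{j=1}^{d+1} (-1)^j A_{i_j}$, as required. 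I expect the main obstacle to be orchestrating the sign conventions cleanly: Fan's combinatorial lemma produces an alternating simplex only up to an overall antipodal flip, and one must check that after flipping the sign pattern on the vertices matches the parity pattern $(-1)^j$ demanded by the theorem — this is precisely where antipodal symmetry of $T$ and $\lambda$ is used. Everything else (existence of fine symmetric triangulations, the compactness extraction, and closedness of the $A_i$) is standard.
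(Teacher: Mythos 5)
Your proof is correct, but it follows a genuinely different route from the paper. You reduce the covering statement to Fan's combinatorial (antipodal) labeling lemma via a fine antipodally symmetric triangulation, careful sign bookkeeping, and a compactness/mesh-refinement limit; the sign accounting works out (with $\epsilon=-1$ the vertex of absolute label $k_{j-1}$ carries sign $(-1)^j$, as you verify), and the finite-pigeonhole plus diameter-to-zero extraction is sound. This is essentially the classical derivation, and it treats Fan's combinatorial lemma as a black box. The paper instead deliberately avoids that lemma: it proves the more general Theorem~\ref{thm:fan-gen} by building a PL map from a fine symmetric triangulation of $S^d$ to $\R^d$ whose zeros correspond, via the Radon correspondence of Lemma~\ref{lem:radon}, to Radon pairs of a point configuration $X\subset\R^{d-1}$, and then applies the continuous Borsuk--Ulam theorem directly. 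Fan's Theorem~\ref{thm:fan} is then extracted as the single special case where $X$ lies on the moment curve, with Gale's evenness criterion translating ``minimal Radon pair'' into the alternating index-and-sign pattern. What the paper's route buys: a self-contained derivation from Borsuk--Ulam alone (no appeal to Fan's labeling lemma), and the full family of variants of Theorem~\ref{thm:fan-gen} indexed by order types of point sets, of which Fan's theorem is one instance. What your route buys: a direct proof of Theorem~\ref{thm:fan} if one is willing to assume the combinatorial lemma, but no generalization. One small presentational caveat: as you note, Fan's lemma already guarantees an alternating facet starting with $+$; passing to the antipode to force $\epsilon=-1$ is fine, but you could equally relabel $A_i\leftrightarrow -A_i$ at the end, since the theorem's conclusion is invariant under a global antipodal flip.
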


For $f\colon S^d \to \R^m$ an odd map without zeros, let $A_i = \{x \in S^d\, |\, f_i(x) \ge \varepsilon\}$. By compactness of~$S^d$, the $A_i$ and their negatives~$-A_i$ cover the sphere for sufficiently small~$\varepsilon$. Fan's result thus provides structural insight into odd maps without zeros: They do not exist for $m\le d$, and for $m>d$ Fan derives a result about the simultaneous maximization of the absolute value of some $d+1$ coordinates of~$f$.

We explain the utility of this extension of the Borsuk--Ulam theorem in an example: The Borsuk--Ulam theorem yields a lower bound for the chromatic number~$\chi(G)$ of a graph~$G$; Fan's theorem gives structural results for proper colorings of $G$ with (necessarily) at least~$\chi(G)$ colors; see~\cite{fan1982evenly, hajiabolhassan2011generalization, SimonyiTardos2006, simonyi2007colorful} for various such examples. It is thus of interest to study the general space of results that extend Fan's theorem, since those results govern the structure of a variety of mathematical problems---those that can be solved through an application of the Borsuk--Ulam theorem. As $m-d$ grows the combinatorial patterns of indices and signs that yield non-empty intersections as in Fan's theorem become surprisingly rich and are determined by Radon-type (convex hull) intersection results of $m$ points in~$\R^{d-1}$. Concretely we prove:

\begin{thm}
\label{thm:fan-gen}
    Let $m$ be a positive integer, and let $X = \{x_1, \dots, x_m\} \subset \R^{d-1}$ be a set of $m$ points. Let $A_1, \dots, A_m \subset S^d$ be closed sets such that $A_i \cap (-A_i) =\emptyset$ for all $i \in [m]$ and $\bigcup_i A_i \cup \bigcup_i (-A_i) = S^d$. Then there are disjoint subsets $S, T \subset [m]$ such that \[\conv\{x_i\, |\, i \in S\} \cap \conv\{x_i\, |\, i \in T\} \ne \emptyset \ \text{and} \ \bigcap_{i \in S} A_i \cap \bigcap_{i \in T} (-A_i) \ne \emptyset.\]
\end{thm}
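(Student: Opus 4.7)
The plan is to reduce Theorem~\ref{thm:fan-gen} to the Borsuk--Ulam theorem by packaging the Radon-type condition on the $x_i$ and the combinatorial condition on the $A_i$ into a single odd map $S^d \to \R^d$. The key observation is that $\conv\{x_i : i \in S\} \cap \conv\{x_i : i \in T\} \ne \emptyset$ for disjoint $S, T$ is equivalent to a signed affine dependence among the homogenized vectors $\hat{x}_i = (x_i, 1) \in \R^{d-1} \times \R = \R^d$: there exists a nonzero $\gamma \in \R^m$ with $\sum_i \gamma_i \hat{x}_i = 0$, $\gamma_i \ge 0$ for $i \in S$, $\gamma_i \le 0$ for $i \in T$, and $\gamma_i = 0$ otherwise. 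Thus any map of the form $F(x) = \sum_i \lambda_i(x) \hat{x}_i$ with continuous odd coefficients $\lambda_i$ will, at a zero, produce candidate sets $S, T$ with a non-empty convex hull intersection, \emph{provided} we can further arrange that $\lambda_i(x) > 0$ forces $x \in A_i$ and $\lambda_i(x) < 0$ forces $x \in -A_i$.

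To construct such $\lambda_i$, I would not try to capture membership in the closed sets $A_i$ exactly; instead I work with a small parameter $\varepsilon > 0$ (smaller than half the minimum positive distance $\min_i \dist(A_i, -A_i)$) and set
\[
\phi_i^\varepsilon(x) \;=\; \max\bigl(\varepsilon - \dist(x, A_i),\, 0\bigr) \;-\; \max\bigl(\varepsilon - \dist(x, -A_i),\, 0\bigr).
\]
These functions are continuous, odd, and by the choice of $\varepsilon$ they satisfy $\phi_i^\varepsilon(x) > 0 \iff x$ lies in the open $\varepsilon$-neighborhood of $A_i$, and symmetrically for $-A_i$. Defining $F_\varepsilon(x) = \sum_{i=1}^m \phi_i^\varepsilon(x)\, \hat{x}_i$ yields an odd continuous map $S^d \to \R^d$, to which Borsuk--Ulam produces a zero $p_\varepsilon$.

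From such a zero, the first $d-1$ coordinates give $\sum_i \phi_i^\varepsilon(p_\varepsilon) x_i = 0$ and the last coordinate gives $\sum_i \phi_i^\varepsilon(p_\varepsilon) = 0$. Setting $S_\varepsilon = \{i : \phi_i^\varepsilon(p_\varepsilon) > 0\}$ and $T_\varepsilon = \{i : \phi_i^\varepsilon(p_\varepsilon) < 0\}$ then gives disjoint sets whose $x_i$-convex hulls meet; the covering hypothesis $\bigcup_i A_i \cup \bigcup_i (-A_i) = S^d$ guarantees some $\phi_i^\varepsilon(p_\varepsilon) \ne 0$, and the vanishing-sum condition forces both $S_\varepsilon$ and $T_\varepsilon$ to be non-empty simultaneously. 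Finally, letting $\varepsilon_k \to 0$ and passing to a subsequence along which $p_{\varepsilon_k} \to p$ and $(S_{\varepsilon_k}, T_{\varepsilon_k})$ stabilize (possible since $2^{[m]} \times 2^{[m]}$ is finite) to some $(S, T)$, the inequality $\dist(p_{\varepsilon_k}, A_i) < \varepsilon_k$ for $i \in S$ and closedness of $A_i$ yield $p \in \bigcap_{i \in S} A_i$; symmetrically $p \in \bigcap_{i \in T} (-A_i)$. The convex-hull intersection condition is preserved under limits, so $(S, T, p)$ is the desired triple.

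The main obstacle I anticipate is precisely the gap between "$\phi_i^\varepsilon(p_\varepsilon) > 0$" (a neighborhood condition) and the stronger conclusion $p \in A_i$ demanded by the theorem. One cannot substitute $\phi_i^\varepsilon$ by a truly supported-on-$A_i$ function and retain continuity when $A_i$ has empty relative interior, so the approximation-and-limit device is essential; the finiteness of the combinatorial data $(S_\varepsilon, T_\varepsilon)$ is what makes this limit clean. Everything else is bookkeeping around Borsuk--Ulam.
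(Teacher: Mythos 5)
Your proof is correct, and at a high level it follows the same strategy as the paper's: build an odd map $S^d \to \R^d$ from the covering data, apply Borsuk--Ulam, and pass to the limit $\varepsilon \to 0$ using compactness of $S^d$ together with finiteness of the combinatorial data $(S_\varepsilon,T_\varepsilon)$. The difference is technical but worth noting. The paper takes a fine antipodally symmetric triangulation of $S^d$, labels each vertex with $\pm(x_i,1)$ according to which $A_i$ or $-A_i$ contains it, and uses the piecewise-linear extension; the zero then lives in a facet whose vertex labels encode a Radon pair via Lemma~\ref{lem:radon}. You instead build a genuinely continuous odd map directly from the distance-function bump functions $\phi_i^\varepsilon$, which is in effect a hand-made partition of unity; this is closer in spirit to how the paper proves the continuous generalization, Theorem~\ref{thm:fan-gen-cont}, via a partition of unity on $\R P^d$. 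One mild advantage of your variant is that it does not rely on the existence of arbitrarily fine antipodally symmetric triangulations of $S^d$; the paper's PL version, on the other hand, is what enables the parity count of facets in Theorem~\ref{thm:fan-gen-label}, where linearity is essential. The ``signed affine dependence'' in your first paragraph is precisely Lemma~\ref{lem:radon} after homogenization, and your choice of $\varepsilon$ strictly less than half the minimal separation $\dist(A_i,-A_i)$ is exactly what is needed so that at most one of the two maxima in $\phi_i^\varepsilon$ is positive at any point.
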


Fan's result follows from placing $X$ in cyclic position~\cite{gale1963neighborly}. Thus Fan's theorem corresponds to a single order type of $m$ points as in Theorem~\ref{thm:fan-gen}, but every such order type, of which there are many~\cite{goodman1986upper}, yields another variant of Fan's theorem.
In fact, Fan proves an equivalent result for labelings of sphere triangulations, and shows moreover that the number of facets that are alternatingly labeled is odd. We also prove a strengthening of the corresponding labeling result; see Theorem~\ref{thm:fan-gen-label}. We then prove a colorful generalization of Theorem~\ref{thm:fan-gen}, see Theorem~\ref{thm:fan-gen-col} extending our earlier work~\cite{FrickWellner2023}. While we phrase Theorem~\ref{thm:fan-gen} as intersection combinatorics of convex hulls constraining intersections of closed sets in sphere coverings (or in the contrapositive, intersection combinatorics in sphere coverings constraining intersections of convex hulls), our result admits a ``continuous generalization,'' where convex hulls are replaced with continuous images under a map from the simplex on~$X$; see Theorem~\ref{thm:fan-gen-cont}.

Our proof of Theorem~\ref{thm:fan-gen} is a short and simple reduction to the Borsuk--Ulam theorem. Even for the special case of Fan's theorem our proof might be the simplest in the literature; compare~\cite{LoeraGoaocMeunierMustafa2017, musin2016} for a simple proof of Fan's theorem in the case $m=d+1$. In particular, our proof of finding an approximate point of intersection in the full generality of Theorems~\ref{thm:fan} and~\ref{thm:fan-gen} is constructive and exhibits the corresponding search problem as lying in PPA~\cite{aisenberg20202}; see~\cite{prescott2005constructive} for a constructive proof of Fan's theorem and~\cite{freund1981constructive} for a constructive proof of Tucker's lemma. 

In Section~\ref{sec:proof} we present the proofs of the main results. In Sections~\ref{sec:consequences-top}, \ref{sec:consequences-comb}, and~\ref{sec:consequences-geom} we collect consequences of these results, that is, for various applications of the Borsuk--Ulam theorem, we develop results about the structure of solutions that go beyond mere non-existence. Section~\ref{sec:consequences-top} collects topological consequences about sphere coverings, embeddings, odd maps without common zeros, and connections to related fixed point theorems; Section~\ref{sec:consequences-comb} gives combinatorial consequences, and Section~\ref{sec:consequences-geom} generalizations of the ham sandwich theorem in discrete geometry. Concretely,
\begin{compactitem}
    \item we deduce results about the intersection combinatorics of sphere coverings such as a colorful generalization of Fan's theorem, see Corollary~\ref{cor:fan-col}, a colorful Lusternik--Schnirelman theorem, see Corollary~\ref{cor:localLS-signs}, and a colorful generalization of the local Lusternik--Schnirelman theorem of Chase, Chornomaz, Moran, and Yehudayoff~\cite{chase2024local}, see Corollary~\ref{cor:localLS}.
    \item We exhibit various non-embeddability results for simplicial complexes as special cases of Theorem~\ref{thm:fan-gen-cont}, the continuous extension of Theorem~\ref{thm:fan-gen}, see Subsection~\ref{subsec:radon}.
    \item We prove a generalization of Fan's theorem about the existence of points, where a set of odd real-valued maps is minimized or maximized; see Corollary~\ref{cor:odd}.
    \item Subsection~\ref{subsec:fixedpt} briefly surveys fixed-point theorems, such as Komiya's theorem~\cite{Komiya1994}, that our main result generalizes to the equivariant setting. 
    \item In Subsection~\ref{subsec:kneser}, we give general results on the existence of rainbow faces (see Corollary~\ref{cor:rainbow}) that generalize topological lower bounds for chromatic numbers of graphs, the topological Hall theorem, and results about the existence of colorful complete bipartite subgraphs. In particular, we give a new equivariant-topological proof of the topological Hall theorem.
    \item We derive a Hall-type theorem for hypergraphs in terms of systems of disjoint representatives; see Corollary~\ref{cor:hall-zigzag}.
    \item In Section~\ref{sec:consequences-geom}, we present generalizations of the Ham Sandwich theorem to more measures than dimensions, see Corollary~\ref{cor:fan-ham}, and a colorful extension; see Corollary~\ref{cor:fan-ham-col}.
\end{compactitem}

In Section~\ref{sec:two-spheres}, we extend our results to a product of two spheres and $(\Z/2)^2$-equivariance. We give an application to mass partitions by two hyperplanes. In Section~\ref{sec:final}, we discuss the generality and optimality of our results.

There is a large variety of variants and generalizations of the Borsuk--Ulam theorem; see Steinlein's survey~\cite{steinlein1985borsuk} of some of these results and Blagojevi\'c and Ziegler~\cite{blagojevic2017} for a more recent survey.

\section{Proof of the main result and three extensions}
\label{sec:proof}

Here we give a short proof of Theorem~\ref{thm:fan-gen}. We then prove three extensions:
\begin{compactenum}[1.]
    \item An extension to odd labelings of the vertex set of sphere triangulations, where combinatorics of facet labels replace the intersections combinatorics of Theorem~\ref{thm:fan-gen}. This extension in addition to the existence asserts that the number of facets that are appropriately labeled is odd, extending Fan's original labeling result; see Theorem~\ref{thm:fan-gen-label}.
    \item Whereas Theorem~\ref{thm:fan-gen} gives conditions for an intersection of two convex hulls, the continuous extension, Theorem~\ref{thm:fan-gen-cont}, more generally asserts that for any continuous map from the simplex on the set $X$ to~$\R^{d-1}$, the images of the corresponding faces of the simplex will intersect.
    \item Theorem~\ref{thm:fan-gen-col} gives a colorful generalization of Theorem~\ref{thm:fan-gen} that applies to $d+1$ sphere coverings instead of one. If all sphere coverings are the same this recovers Theorem~\ref{thm:fan-gen}.
\end{compactenum}

\medskip

Let $X \subset \R^{d-1}$. A pair~$(A,B)$ of disjoint subsets of~$X$ is called \emph{Radon} (for~$X$) if their convex hulls intersect, $\conv A \cap \conv B \ne \emptyset$. We will need the following standard correspondence between Radon pairs of a point set in~$\R^{d-1}$ and certain point sets in~$\R^d$ whose convex hulls capture~$0$; see Radon~\cite{radon1921mengen}. For a set $X \subset \R^{d-1}$ write $X_+ = \{(x,1) \in \R^d\, |\, x\in X\}$ and $X_- = \{(-x,-1) \in \R^d\, |\, x\in X\}$.

\begin{lem}
\label{lem:radon}
    Let $X \subset \R^{d-1}$, and let $A,B\subset X$ be disjoint subsets. The pair~$(A,B)$ is Radon if and only if $0 \in \conv(A_+\cup B_-)$.
\end{lem}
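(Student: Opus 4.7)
The plan is a direct calculation exploiting the normalization provided by the last coordinate of the lifted sets~$A_+$ and~$B_-$. For the forward implication, I would start with a point $p \in \conv A \cap \conv B$ expressed as two convex combinations $p = \sum_{a \in A} \lambda_a a = \sum_{b \in B} \mu_b b$ with $\lambda_a, \mu_b \ge 0$ and $\sum \lambda_a = \sum \mu_b = 1$. Then the convex combination $\frac{1}{2}\sum_{a \in A} \lambda_a (a,1) + \frac{1}{2}\sum_{b \in B} \mu_b (-b,-1)$ has first~$d-1$ coordinates $\frac{1}{2}(p - p) = 0$ and last coordinate $\frac{1}{2}(1 - 1) = 0$, yielding $0 \in \conv(A_+ \cup B_-)$.

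For the reverse direction, I would start with nonnegative coefficients $\alpha_a$ ($a \in A$) and $\beta_b$ ($b \in B$) summing to~$1$ such that $\sum \alpha_a (a,1) + \sum \beta_b (-b,-1) = 0$. Reading off the last coordinate forces $\sum \alpha_a = \sum \beta_b$, and combined with the total mass being~$1$ this gives $\sum \alpha_a = \sum \beta_b = \frac{1}{2}$. Reading off the first $d-1$ coordinates then gives $\sum \alpha_a a = \sum \beta_b b$, and after rescaling by the factor~$2$ both sides become genuine convex combinations, exhibiting a common point in $\conv A \cap \conv B$.

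There is no real obstacle here; the only subtlety is degenerate cases. If $A = \emptyset$ or $B = \emptyset$ then $\conv A \cap \conv B = \emptyset$ on the one side, and on the other side the last-coordinate equation $\sum \alpha_a - \sum \beta_b = 0$ together with $\sum \alpha_a + \sum \beta_b = 1$ cannot be satisfied by nonnegative coefficients supported in just one of $A_+, B_-$, so both directions trivially hold. The nontrivial content is captured by the short two-paragraph computation above.
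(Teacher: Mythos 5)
Your proof is correct and follows essentially the same route as the paper: lift to~$\R^d$ with a $\pm 1$ last coordinate, use weights of~$\tfrac12$ in the forward direction, and in the reverse direction read the last coordinate to force the two mass totals to equal~$\tfrac12$ and then rescale by~$2$. Your extra remark about the degenerate cases $A=\emptyset$ or $B=\emptyset$ is a small but correct addition that the paper leaves implicit.
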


\begin{proof}
    The convex hulls of $A$ and $B$ intersect if and only if there are coefficients $\lambda_a \ge 0$, $a \in A$, and $\lambda_b \ge 0$, $b \in B$, with $\sum_{a\in A} \lambda_a = 1 = \sum_{b \in B} \lambda_b$ and $\sum_{a\in A} \lambda_a a = \sum_{b\in B} \lambda_b b$. Then 
    \[
        0 = \sum_{a \in A} \frac12\lambda_a (a,1) + \sum_{b \in B} \frac12\lambda_b (-b,-1),
    \]
    and so $0 \in \conv(A_+\cup B_-)$. 

    Conversely, if $0 = \sum_{a \in A} \lambda_a (a,1) + \sum_{b\in B} \lambda_b (-b,-1)$ with $\sum_{a \in A} \lambda_a + \sum_{b \in B} \lambda_b = 1$. The last coordinate implies $\sum_a \lambda_a = \frac12 = \sum_b \lambda_b$. Thus $\sum_a 2\lambda_a a = \sum_b 2 \lambda_b b$ with $\sum_a 2\lambda_a = 1 = \sum_b 2 \lambda_b$, and so $\conv A \cap \conv B \ne \emptyset$. 
\end{proof}

By a \emph{triangulation} of a space~$X$, we always mean a simplicial complex whose geometric realization is homeomorphic to~$X$. See~\cite{kozlov2008, Matousek2003book} for the basics.

\begin{proof}[Proof of Theorem~\ref{thm:fan-gen}]
    Let $\varepsilon > 0$ such that any $A_i$ is at distance at least~$\varepsilon$ from~$-A_i$. Let $\Sigma$ be an antipodally symmetric triangulation of~$S^d$, where every facet has diameter less than~$\varepsilon$. Let $V$ denote the vertex set of~$\Sigma$ and let $f \colon V \to \R^d$ be a function with the property that $f(v) = (x_i, 1)$ for $v \in A_i$ and $f(v) = (-x_i, -1)$ for $v \in (-A_i)$. If $v$ is in multiple~$A_i$, then choose one arbitrarily, but in such a way that $f(-v) = -f(v)$. 

    We can think of $f$ as a map $f\colon \Sigma \to \R^d$ by linearly extending it to the faces of~$\Sigma$. The zeros of~$f$ then precisely correspond to the Radon pairs for~$X$ by Lemma~\ref{lem:radon}. Here we use that by the choice of~$\varepsilon$ if $f(v) = -f(w)$ then $v$ and~$w$ are not in a common face of~$\Sigma$. By the Borsuk--Ulam theorem, $f$ has a zero. Now let $\varepsilon$ go to zero and use compactness of~$S^d$. 
\end{proof}

Let $\Sigma$ be an antipodally symmetric triangulation of~$S^d$ on vertex set~$V$. 
Let $f \colon V \to \R^d$ be a labeling of the vertices of~$\Sigma$ with points in~$\R^d$ that respects the antipodal symmetry: $f(-v) = -f(v)$ for all~$v \in V$. We will refer to $f$ as an \emph{odd} labeling. A facet $\sigma$ of $\Sigma$ \emph{captures~$0$} (for~$f$) if $0 \in \conv\{f(v)\, |\, v\in \sigma\}$. A labeling $f\colon V \to \R^d$ is \emph{generic} if for every face~$\tau$ of dimension at most~${d-1}$, we have that $0 \notin \conv\{f(v)\, |\, v\in \tau\}$. 

\begin{thm}
\label{thm:fan-gen-label}
    Let $f\colon V \to \R^d$ be a generic odd labeling of the vertex set~$V$ of an antipodally symmetric triangulation~$\Sigma$ of~$S^d$. Then the number of facets of $\Sigma$ that capture $0$ is~$2k$, where $k$ is odd. 
\end{thm}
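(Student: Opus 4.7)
The plan is to extend $f$ to a continuous piecewise-linear map $F\colon|\Sigma|\to\R^d$ by affine interpolation on each simplex. Oddness and genericity of $f$ carry over to~$F$: the latter means that $0\notin F(\tau)$ for every face $\tau$ of dimension at most $d-1$. Since $F$ is affine on each $d$-dimensional facet, $F^{-1}(0)$ intersects each facet in at most a single interior point, and such a point exists exactly when the facet captures~$0$. The antipodal symmetry of $F$ forces $|F^{-1}(0)|=2k$, matching the count in the theorem.

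The harder task is showing $k$ is odd. My plan is to establish mod-$2$ invariance of $k$ under generic equivariant homotopies and then reduce to a single computation. Given two generic odd PL labelings $F_0$ and $F_1$ on a common antipodally symmetric triangulation (passing to a common refinement if needed), form the straight-line equivariant homotopy $F_t=(1-t)F_0+tF_1$. A small generic perturbation keeps $F_t$ generic except at finitely many transversal events, and on each facet $\sigma$ the map $F_t|_\sigma$ is affine in $(x,t)$. Consequently
\[
    Z=\bigl\{(x,t)\in|\Sigma|\times[0,1]\,\bigl|\,F_t(x)=0\bigr\}
\]
is a compact PL $1$-manifold. The antipodal action on $Z$ is free, so $Z/(\Z/2)$ is a compact $1$-manifold whose boundary consists of $k(F_0)+k(F_1)$ points. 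Since the boundary of a compact $1$-manifold has even cardinality, $k(F_0)\equiv k(F_1)\pmod 2$.

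For the base case, let $\Sigma_0$ be the boundary of the cross-polytope in $\R^{d+1}$ with vertices $\pm e_1,\dots,\pm e_{d+1}$, and define $f_0(\pm e_i)=\pm e_i\in\R^d$ for $i\le d$ together with $f_0(\pm e_{d+1})=\pm(\varepsilon,\varepsilon^2,\dots,\varepsilon^d)$ for small $\varepsilon>0$. Reading coordinate by coordinate in any candidate convex combination immediately shows that $0$ lies in the relative interior of the image of exactly the two antipodal facets spanned by $\{e_1,\dots,e_d,-e_{d+1}\}$ and $\{-e_1,\dots,-e_d,e_{d+1}\}$, while every face of dimension at most $d-1$ misses $0$. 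Hence $k(f_0)=1$, and combining with the invariance above shows $k$ is odd for every generic odd PL labeling.

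The main obstacle is the PL transversality step: verifying that $Z$ is genuinely a $1$-manifold, and that any two generic odd PL labelings can be joined through a homotopy with only codimension-$1$ defects. In the PL category this needs some care, but because $F_t$ restricted to each facet of $\Sigma$ is an explicit affine family in $(x,t)$, the non-generic conditions cut out a codimension-one algebraic subset of the space of perturbations, so a Sard-style argument suffices. A secondary subtlety is passing to a common refinement of two triangulations while preserving genericity, handled by a further generic perturbation of the labeling at the new vertices.
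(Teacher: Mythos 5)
Your argument takes a genuinely different route from the paper's. The paper constructs an auxiliary odd map $\widetilde f\colon |\Sigma|\to\R^{d+1}$ by keeping the piecewise-linear extension $F$ in the first $d$ coordinates and adding a $(d+1)$st coordinate that vanishes outside the facets capturing $0$ and pushes their barycenters to $\pm e_{d+1}$; this $\widetilde f$ is nowhere zero and odd, so after normalization it is an odd self-map of $S^d$, whose degree is odd by the Borsuk--Ulam theorem. Since $\widetilde f$ is a local homeomorphism near $\widetilde f^{-1}(e_{d+1})$, that degree is a signed count of the $k$ barycenters, and a sum of $k$ signs $\pm 1$ has the same parity as $k$. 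Your proof instead re-derives the mod-$2$ invariance directly by a cobordism argument (the zero set of the straight-line equivariant homotopy is a compact $1$-manifold whose free $\Z/2$-quotient has $k(F_0)+k(F_1)$ boundary points, hence even) plus an explicit base case. Both your $1$-manifold argument and your base-case computation on the cross-polytope are correct: checking signs coordinate-by-coordinate really does pin down exactly the two antipodal facets $\{e_1,\dots,e_d,-e_{d+1}\}$ and $\{-e_1,\dots,-e_d,e_{d+1}\}$, and no face of dimension $\le d-1$ hits $0$.

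The gap is the bridge from an arbitrary antipodally symmetric $\Sigma$ to the cross-polytope base case, and that bridge is exactly where the Borsuk--Ulam content resides. ``Passing to a common refinement'' presupposes a $\Z/2$-equivariant PL identification of $|\Sigma|$ with the boundary of the cross-polytope, but the theorem allows $\Sigma$ to be \emph{any} antipodally symmetric triangulation of $S^d$, including (for $d\ge 5$) simplicial complexes homeomorphic to $S^d$ that are not PL spheres; for those, no common refinement with the cross-polytope exists. Even restricting to PL $\Sigma$, you are implicitly using that the equivariant PL structure on $S^d$ is essentially unique and that the facet count is preserved when a labeling is extended to a refinement and re-perturbed to genericity; neither step is spelled out. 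The paper's degree argument is immune to these issues because it only needs $\widetilde f$ to be a continuous odd self-map of $S^d$, not a PL one. A cleaner repair of your approach would be to produce, within each $\Sigma$ directly (or within a subdivision of $\Sigma$ only), a single generic odd labeling with $k=1$ and then use only your within-$\Sigma$ homotopy invariance; but exhibiting such a labeling is itself not obvious (naively sending all vertices outside one antipodal pair of facets far from the origin can still create spurious zero-capturing facets that mix near and far vertices), so some argument is needed there too. As written, the reduction to the single cross-polytope example does not cover the stated generality.
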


\begin{proof}
    Let $\widetilde f \colon \Sigma \to \R^{d+1}$ be the map that is the linear extension to faces of~$f$ in the first $d$ coordinates, and in the last coordinate $\widetilde f_{d+1}$ is defined as follows: $\widetilde f_{d+1}$ is identically $0$ on every face that does not capture~$0$ for~$f$; facets that capture~$0$ for~$f$ come in antipodal pairs $\sigma, -\sigma$. The $(d+1)$st coordinate $\widetilde f_{d+1}$ is positive in the interior of~$\sigma$, and thus negative in the interior of~$-\sigma$. (The choice of which facet is $\sigma$ and which is $-\sigma$ is arbitrary.) Concretely, we map the barycenter of~$\sigma$ to the standard basis vector~$e_{d+1}$ and extend along rays to all of~$\sigma$. In particular, for $x \in \widetilde f^{-1}(e_{d+1})$ there is a neighborhood $U$ of~$x$ such that $\widetilde f|_U$ is a homeomorphism. Since $\widetilde f$ is a local homeomorphism around points in the preimage of~$e_{d+1}$, its degree $\deg \widetilde f$ can be computed as a sum of local degrees, which are all~$\pm 1$, around these preimages; see~{\cite[Prop.~2.30]{hatcher2002}}. 
    The degree $\deg\widetilde f$ is odd by the Borsuk--Ulam theorem, so $\widetilde f^{-1}(e_{d+1})$ has odd cardinality, and twice as many facets of~$\Sigma$ capture~$0$.
\end{proof}

Theorem~\ref{thm:fan-gen-label} is the labeling variant of Theorem~\ref{thm:fan-gen} as we explain in the following remark:

\begin{rem}
    Let $X = \{x_1, \dots, x_m\} \subset \R^{d-1}$ be generic in the sense that any Radon pair $(A,B)$ for~$X$ involves at least~$d+1$ points, and let $\Sigma$ be an antipodally symmetric triangulation of~$S^d$ on vertex set~$V$. Let $A_1, \dots, A_m \subset S^d$ be closed sets such that $A_i \cap (-A_i) =\emptyset$ for all $i \in [m]$ and $\bigcup_i A_i \cup \bigcup_i (-A_i) = S^d$. Define $f\colon V \to \R^d$ by first finding the smallest~$i$ such that $v\in A_i \cup (-A_i)$ and letting $f(v) = (x_i,+1)$ for $v\in A_i$ and $f(v)=(-x_i,-1)$ for $v\in (-A_i)$. This map satisfies $f(-v) = -f(v)$. By Lemma~\ref{lem:radon} $f$ is a generic labeling. 

    By Theorem~\ref{thm:fan-gen-label} there are $2k$ facets of~$\Sigma$, for some odd integer~$k$, that capture~$0$. Let $v_1, \dots, v_{d+1}$ be the vertices of one such facet~$\sigma$. Let $S\subset [d+1]$ be the set of $i \in [d+1]$ where the last coordinate of $f(v_i)$ is~$+1$, and let $T = [d+1]\setminus S$. Say $f(v_j) = (x_{i_j},+1)$ for $j \in S$ and $f(v_j) = (-x_{i_j},-1)$ for $j \in T$. Then $\sigma$ intersects the sets $A_{i_j}$ for $j \in S$ and $-A_{i_j}$ for $j \in T$. Further, by Lemma~\ref{lem:radon} $\conv\{x_{i_j} \, | \, j \in S\} \cap \conv\{x_{i_j} \, | \, j \in T\} \ne \emptyset$.
\end{rem}

For Theorem~\ref{thm:fan-gen-label}, we need a genericity assumption to derive a constraint on the parity of facets that capture~$0$. Without this genericity assumption we still get an existence result, which we may phrase in terms of Radon partitions as in Theorem~\ref{thm:fan-gen}. Thus the following lemma is a labeling variant of Theorem~\ref{thm:fan-gen}, which we record here for later use:

\begin{lem}
\label{lem:labeled-facet}
    Let $\Lambda$ be a simplicial complex on vertex set~$V \times \{-1,+1\}$ such that $(v,+1) \mapsto (v,-1)$ induces a well-defined $\Z/2$-action on~$\Lambda$. Assume there is a $\Z/2$-equivariant map $S^d \to \Lambda$. Let $f\colon V \times \{-1,+1\} \to \R^{d-1}$ be a map with $f(v,-1) = f(v,+1)$ for all~$v\in V$. Then there is a face~$\sigma$ of~$\Lambda$ such that \[\conv f(\sigma \cap (V\times \{+1\})) \cap \conv f(\sigma \cap (V \times \{-1\})) \ne \emptyset.\]
\end{lem}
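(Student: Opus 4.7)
The plan is to follow the same strategy as in the proof of Theorem~\ref{thm:fan-gen}, but with the complex $\Lambda$ playing the role of the antipodally symmetric triangulation of $S^d$, and with the given equivariant map from $S^d$ serving as the bridge to Borsuk--Ulam. Concretely, define a labeling $F\colon V \times \{-1,+1\} \to \R^d$ by
\[
F(v,+1) = (f(v,+1),\, +1), \qquad F(v,-1) = (-f(v,-1),\, -1).
\]
Because $f(v,-1) = f(v,+1)$, we have $F(v,-1) = -F(v,+1)$, so $F$ is equivariant for the $\Z/2$-actions (vertex swap on $V \times \{-1,+1\}$ and antipodal on $\R^d$). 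Linearly extend $F$ over faces of $\Lambda$ to get a continuous $\Z/2$-equivariant map $\widetilde F\colon |\Lambda| \to \R^d$, and compose with the given equivariant map $g\colon S^d \to \Lambda$ to obtain an odd continuous map $\widetilde F \circ g\colon S^d \to \R^d$.

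By the Borsuk--Ulam theorem, there is a point $x_0 \in S^d$ with $\widetilde F(g(x_0)) = 0$. Let $\sigma$ be the unique face of $\Lambda$ whose relative interior contains $g(x_0)$, so that $g(x_0)$ is a strictly positive convex combination of the vertices of $\sigma$. Then
\[
0 \;=\; \sum_{w \in \sigma} \lambda_w\, F(w)
\]
with $\lambda_w > 0$ and $\sum_{w \in \sigma} \lambda_w = 1$. Writing $S_\pm = \sigma \cap (V \times \{\pm 1\})$, reading off the last coordinate forces $\sum_{w \in S_+} \lambda_w = \tfrac{1}{2} = \sum_{w \in S_-} \lambda_w$, and the first $d-1$ coordinates then give $\sum_{w \in S_+} 2\lambda_w\, f(w) = \sum_{w \in S_-} 2\lambda_w\, f(w)$, exhibiting a common point of $\conv f(S_+)$ and $\conv f(S_-)$. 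This is exactly the computation in the proof of Lemma~\ref{lem:radon}; when $f(S_+)$ and $f(S_-)$ are disjoint subsets of $\R^{d-1}$, that lemma applies verbatim to $0 \in \conv F(\sigma)$.

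There is no serious obstacle: once $F$ is defined with the right equivariance, the composition $\widetilde F \circ g$ places us in a setting to which Borsuk--Ulam applies immediately, and the conclusion falls out of the Radon correspondence. The only minor point to dispatch is the possibility that $\sigma$ contains both $(v,+1)$ and $(v,-1)$ for some $v \in V$, in which case $f(v,+1) = f(v,-1)$ already lies in both $\conv f(S_+)$ and $\conv f(S_-)$ and the conclusion is immediate; thus the argument works regardless of whether the $\Z/2$-action on $\Lambda$ is free.
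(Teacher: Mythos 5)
Your proposal is correct and is essentially the paper's own argument: define the same odd labeling $(v,\pm1)\mapsto(\pm f(v),\pm1)$, extend linearly over $\Lambda$, compose with the given equivariant map $S^d\to\Lambda$, apply the Borsuk--Ulam theorem, and read off the Radon-type conclusion via the computation of Lemma~\ref{lem:radon}. Your extra remarks (handling the case where $\sigma$ contains an antipodal vertex pair, and not needing disjointness of $f(S_+)$ and $f(S_-)$) are careful but do not change the route.
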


\begin{proof}
    Define $h\colon V \times \{-1,+1\} \to \R^d$ by $h(v,+1) = (f(v),+1)$ and $h(v,-1) = (-f(v), -1)$. By linearly extending onto faces $h$ is a $\Z/2$-equivariant map $\Lambda \to \R^d$. By composition we obtain a $\Z/2$-equivariant map $S^d \to \Lambda \to \R^d$, and thus $h$ has a zero by the Borsuk--Ulam theorem. Let $\sigma$ be a face of $\Lambda$ that contains a zero of~$h$. Now Lemma~\ref{lem:radon} finishes the proof.
\end{proof}

Denote the standard $(m-1)$-dimensional simplex by
\[
    \Delta_{m-1} = \{x\in \R^m\, |\, x_i \ge 0 \ \text{for all}\ i \ \text{and} \ \sum_i x_i =1\}.
\]
The vertices of $\Delta_{m-1}$ are the standard basis vectors $e_1, \dots, e_m$, and we will identify this vertex set with~$[m]$. We will denote the boundary of the $m$-dimensional crosspolytope by
\[
    (\Delta_{m-1})^{*2}_\Delta = \{x \in \R^m\, |\, \sum_i |x_i|=1\}.
\]
The notation indicates that the crosspolytope~$(\Delta_{m-1})^{*2}_\Delta$ is the deleted join of the simplex~$\Delta_{m-1}$. Note that $\Delta_{m-1} \subset (\Delta_{m-1})^{*2}_\Delta$ and $-\Delta_{m-1} \subset (\Delta_{m-1})^{*2}_\Delta$. Every point in~$(\Delta_{m-1})^{*2}_\Delta \setminus (\Delta_{m-1} \cup (-\Delta_{m-1}))$ is a unique convex combination of a point in~$\Delta_{m-1}$ and a point in~$-\Delta_{m-1}$, and these points lie in faces $\sigma$ of~$\Delta_{m-1}$ and $-\tau$ of~$-\Delta_{m-1}$ such that $\sigma$ and $\tau$ are disjoint.

\begin{thm}
\label{thm:fan-gen-cont}
    Let $m$ be a positive integer, and let $h\colon \Delta_{m-1} \to \R^{d-1}$ be a continuous map. Let $A_1, \dots, A_m \subset S^d$ be closed sets such that $A_i \cap (-A_i) =\emptyset$ for all $i \in [m]$ and $\bigcup_i A_i \cup \bigcup_i (-A_i) = S^d$. Then there are disjoint faces $\sigma$ and $\tau$ of~$\Delta_{m-1}$ such that \[h(\sigma) \cap h(\tau) \ne \emptyset \ \text{and} \ \bigcap_{i \in \sigma} A_i \cap \bigcap_{i \in \tau} (-A_i) \ne \emptyset.\]
\end{thm}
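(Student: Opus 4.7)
The plan is to adapt the proof of Theorem~\ref{thm:fan-gen} by routing vertex labels through the crosspolytope $(\Delta_{m-1})^{*2}_\Delta$ instead of directly into $\R^d$. The new ingredient is a continuous $\Z/2$-equivariant map $H\colon(\Delta_{m-1})^{*2}_\Delta\to\R^d$ built from $h$: for $x=(1-t)a-tb$ in the canonical decomposition with $a,b\in\Delta_{m-1}$ of disjoint support and $t\in[0,1]$ from the paragraph preceding the theorem, set
\[
    H(x)=\bigl((1-t)h(a)-t\,h(b),\ 1-2t\bigr)\in\R^{d-1}\times\R.
\]
This is well defined and continuous because on $\Delta_{m-1}$ (where $t=0$) and $-\Delta_{m-1}$ (where $t=1$) the formula reduces to $(h(a),1)$ and $(-h(b),-1)$ independently of the coordinate that becomes ambiguous; it is equivariant because the antipodal map on the crosspolytope sends the data $(a,b,t)$ to $(b,a,1-t)$.

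Next, fix $\varepsilon>0$ smaller than the distance from any $A_i$ to $-A_i$ and let $\Sigma$ be an antipodally symmetric triangulation of $S^d$ with facets of diameter less than $\varepsilon$. For each vertex $v$ choose $i_v\in[m]$ with $v\in A_{i_v}\cup(-A_{i_v})$, coordinated across antipodal pairs so that $i_{-v}=i_v$, and define $g\colon V(\Sigma)\to(\Delta_{m-1})^{*2}_\Delta$ by $g(v)=+e_{i_v}$ if $v\in A_{i_v}$ and $g(v)=-e_{i_v}$ otherwise. By the choice of $\varepsilon$, no facet contains both a vertex in $A_i$ and a vertex in $-A_i$, so the labels on each facet span a face of the crosspolytope and linear extension yields an odd map $g\colon\Sigma\to(\Delta_{m-1})^{*2}_\Delta$. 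The composition $F=H\circ g\colon\Sigma\to\R^d$ is continuous and odd, so Borsuk--Ulam produces a zero $p$ lying in some facet $\phi$. Writing $g(p)=(1-t)a-tb$, vanishing of the last coordinate of $H$ forces $t=\tfrac12$ and vanishing of the first $d-1$ coordinates forces $h(a)=h(b)$; letting $\sigma=\mathrm{supp}(a)$ and $\tau=\mathrm{supp}(b)$ then gives disjoint faces of $\Delta_{m-1}$ with $h(\sigma)\cap h(\tau)\ne\emptyset$. Moreover, every $i\in\sigma$ appears as a label $+e_i$ on some vertex of $\phi$, placing that vertex in $A_i$; similarly for $j\in\tau$ and $-A_j$.

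To finish, apply the above with a sequence $\varepsilon_k\to 0$, producing data $(\sigma_k,\tau_k,\phi_k,a_k,b_k)$ for each $k$. Only finitely many disjoint pairs $(\sigma,\tau)\subset[m]$ occur, so pigeonhole together with compactness of $S^d$ and of the faces of $\Delta_{m-1}$ yields a subsequence along which $(\sigma_k,\tau_k)=(\sigma,\tau)$ is fixed, $\phi_k$ shrinks to a single limit point $q\in S^d$, and $a_k\to a\in\sigma$, $b_k\to b\in\tau$ with $h(a)=h(b)$. Closedness of each $A_i$ then gives $q\in\bigcap_{i\in\sigma}A_i\cap\bigcap_{j\in\tau}(-A_j)$, completing the argument. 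I expect the subtlest step to be verifying continuity and equivariance of $H$ along the seam $\Delta_{m-1}\cup(-\Delta_{m-1})$ where the canonical decomposition degenerates, together with checking that the linear extension of $g$ genuinely remains on the crosspolytope rather than in its interior; both amount to careful bookkeeping of disjoint-support decompositions, a continuous-setting analogue of Lemma~\ref{lem:radon}.
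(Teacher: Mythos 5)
Your proof is correct and builds the same $\Z/2$-equivariant map $H=\widetilde h\colon(\Delta_{m-1})^{*2}_\Delta\to\R^d$ that the paper uses, but you reach it by a genuinely different route for the other half of the composition. The paper takes a partition of unity on $\R P^d$ subordinate to the sets $q(A_1),\dots,q(A_m)$ and lifts it to a \emph{continuous} odd map $\widetilde\alpha\colon S^d\to(\Delta_{m-1})^{*2}_\Delta$; then $\widetilde h\circ\widetilde\alpha$ has an exact zero and the conclusion falls out in one step. You instead discretize exactly as in the proof of Theorem~\ref{thm:fan-gen}: label the vertices of a fine antipodal triangulation $\Sigma$ by $\pm e_i$, observe that the labels on each small facet span a genuine face of the crosspolytope (no $i$ appears with both signs, by the choice of $\varepsilon$), get a PL odd map $g\colon\Sigma\to(\Delta_{m-1})^{*2}_\Delta$, apply Borsuk--Ulam to $H\circ g$, and finish with a $\varepsilon\to 0$ pigeonhole-and-compactness argument. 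Both arguments are sound. Yours has the advantage of being structurally parallel to the proof of Theorem~\ref{thm:fan-gen}, so the paper's two main theorems would share one combinatorial engine; the paper's partition-of-unity argument has the advantage of producing a single explicit witness point $x_0$ with no limiting step or subsequence extraction. The subtleties you flagged are handled correctly: $H$ is well-defined and continuous across $\Delta_{m-1}\cup(-\Delta_{m-1})$ because the ambiguous term enters multiplied by $t$ (or $1-t$), which vanishes there, and $h$ is bounded on the compact simplex; the PL extension of $g$ stays on the boundary of the crosspolytope because the labels in each facet lie in a common face; and for each $i\in\mathrm{supp}(a_k)$ the label $+e_i$ really does occur on a vertex of $\phi_k$, since the $i$th coordinate of $g(p_k)=(1-t_k)a_k-t_kb_k$ is strictly positive at $t_k=\tfrac12$.
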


\begin{proof}
    Let $\varepsilon > 0$ such that any $A_i$ is at distance at least~$\varepsilon$ from~$-A_i$. Let $\Sigma$ be an antipodally symmetric triangulation of~$S^d$, where every facet has diameter less than~$\varepsilon$. Let $q\colon S^d \to \R P^d$ be the quotient map that identifies pairs of antipodes. Let $\alpha_1, \dots, \alpha_m \colon \R P^d \to [0,1]$ be a partition of unity subordinate to the cover $q(A_1), \dots, q(A_m)$, that is, the $\alpha_i$ are continuous maps with $\sum_{i=1}^m \alpha_i(x) = 1$ for all $x \in \R P^d$ and $\alpha_i(x) > 0$ implies that $x \in q(A_i)$; see~\cite{Rudin1987book}. 
    For every $i \in [m]$ let $\widetilde\alpha_i\colon S^d \to [-1,1]$ be defined as $\widetilde \alpha_i(x) = \alpha_i(q(x))$ if $x \in A_i$ and $\widetilde\alpha_i(x) = -\alpha_i(q(x))$ otherwise. This is a well-defined, continuous map since $A_i \cap (-A_i) =\emptyset$ and $\alpha_i(q(x)) = 0$ for every $x\in S^d \setminus (A_i \cup(-A_i))$.

    Let $\widetilde h \colon (\Delta_{m-1})^{*2}_\Delta \to \R^d = \R^{d-1} \times \R$ be defined by 
    \[
        \widetilde h(\lambda x + (1-\lambda)y) = (\lambda h(x) - (1-\lambda)h(-y), \lambda - (1-\lambda)), 
    \]
    that is, $\widetilde h(x) = (h(x), 1)$ for $x\in \Delta_{m-1}$, $\widetilde h(x) = (-h(x), -1)$ for $x\in (-\Delta_{m-1})$, and $\widetilde h$ interpolates linearly in between using that every other point is a unique convex combination $\lambda x + (1-\lambda)y$ of points $x\in \Delta_{m-1}$ and $y\in \Delta_{m-1}$. In particular, $h$ is an odd map.
    The map 
    \[
        \widetilde \alpha\colon S^d \to \R^m, \ \widetilde\alpha(x) = (\widetilde\alpha_1(x), \dots, \widetilde\alpha_m(x))
    \]
    is odd and its image is contained in~$(\Delta_{m-1})^{*2}_\Delta$.

    The composition $\widetilde h \circ \widetilde \alpha \colon S^d \to \R^d$ is odd and thus has a zero~$x_0 \in S^d$ by the Borsuk--Ulam theorem. The point~$\widetilde \alpha(x_0)$ lies in the relative interior of a face of~$(\Delta_{m-1})^{*2}_\Delta$ that is the convex hull of a face $\sigma$ of~$\Delta_{m-1}$ and a face $-\tau$ of~$-\Delta_{m-1}$, where $\sigma$ and $\tau$ are disjoint; say $\widetilde\alpha(x_0) = \lambda x + (1-\lambda)y$ for $x\in \sigma$ and $y \in (-\tau)$. The last coordinate constrains $\lambda= \tfrac12$ and $\widetilde\alpha(x_0)$ lies halfway between~$\Delta_{m-1}$ and~$(-\Delta_{m-1})$. Since $0 = \tfrac12h(x)-\tfrac12h(-y)$, we have that $h(x) = h(-y)$, and thus $h(\sigma) \cap h(\tau) \ne \emptyset$. By definition of~$\widetilde \alpha$ we have that $\bigcap_{i \in \sigma} A_i \cap \bigcap_{i \in \tau} (-A_i) \ne \emptyset$.
\end{proof}

To establish the following colorful generalization of Theorem~\ref{thm:fan-gen}, we repeat the proof with a small modification on the barycentric subdivision of a fine triangulation~$\Sigma$ of~$S^d$, where now the dimension of the face that a vertex subdivides determines which covering is used to define the value of the map~$f$ at that vertex. This approach was used by Su~\cite{Su1999} to prove a colorful generalization of Sperner's lemma. 
Recall that the \emph{barycentric subdivision} $\Sigma'$ of a simplicial complex~$\Sigma$ has the faces of $\Sigma$ as its vertex set, and $\{\sigma_1, \dots, \sigma_k\}$ is a face of~$\Sigma'$ if (after possibly reordering) $\sigma_1 \subset \dots \subset \sigma_k$. The barycentric subdivision~$\Sigma'$ is homeomorphic to~$\Sigma$.

\begin{thm}
\label{thm:fan-gen-col}
    Let $m$ be a positive integer, and let $X^{(j)} = \{x_1^{(j)}, \dots, x_m^{(j)}\} \subset \R^{d-1}$, $j\in [d+1]$, be sets of $m$ points. Let $A_1^{(j)}, \dots, A_m^{(j)} \subset S^d$ be closed sets for $j \in [d+1]$ such that $A_i^{(j)} \cap (-A_i^{(\ell)}) =\emptyset$ for all $i \in [m]$ and $j \ne \ell\in[d+1]$ and such that $\bigcup_i A_i^{(j)} \cup \bigcup_i (-A_i^{(j)}) = S^d$ for all $j\in [d+1]$. Then there are disjoint subsets $S, T \subset [m]$ and an injective map $\pi\colon S\cup T \to [d+1]$ such that 
    \[\conv\{x_i^{(\pi(i))}\, |\, i \in S\} \cap \conv\{x_i^{(\pi(i))}\, |\, i \in T\} \ne \emptyset \ \text{and} \ \bigcap_{i \in S} A_i^{(\pi(i))} \cap \bigcap_{i \in T} (-A_i^{(\pi(i))}) \ne \emptyset.\]
\end{thm}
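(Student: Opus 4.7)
The plan is to adapt the proof of Theorem~\ref{thm:fan-gen} by working on the barycentric subdivision $\Sigma'$ of a fine antipodally symmetric triangulation $\Sigma$ of $S^d$, and by letting the dimension of the face of $\Sigma$ that a given vertex of $\Sigma'$ subdivides determine which of the $d+1$ coverings supplies its label. This directly follows Su's colorful Sperner lemma strategy~\cite{Su1999}, and is what will force the injectivity of the color map $\pi$ in the conclusion.

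First I would fix $\varepsilon > 0$ so small that $A_i^{(j)}$ is at distance at least $\varepsilon$ from $-A_i^{(\ell)}$ for all $i \in [m]$ and $j \ne \ell \in [d+1]$, and take $\Sigma$ with facets of diameter less than $\varepsilon$. The barycentric subdivision $\Sigma'$ carries an induced free $\Z/2$-action, sending the vertex $\hat\sigma$ corresponding to a face $\sigma$ of $\Sigma$ to the vertex $\widehat{-\sigma}$, which has the same dimension. For each antipodal pair $\{\sigma, -\sigma\}$ of $k$-dimensional faces of $\Sigma$ I would choose a representative $x \in \sigma$ and an index $i \in [m]$ with $x \in A_i^{(k+1)} \cup (-A_i^{(k+1)})$ (such $i$ exists because these sets cover $S^d$), then set $f(\hat\sigma) = (x_i^{(k+1)}, +1)$ or $(-x_i^{(k+1)}, -1)$ according to which side of the cover contains $x$, and extend antipodally by $f(\widehat{-\sigma}) = -f(\hat\sigma)$.

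Linearly extending over faces produces a $\Z/2$-equivariant map $f \colon \Sigma' \to \R^d$, which must have a zero in the relative interior of some face $\tau$ of $\Sigma'$ by the Borsuk--Ulam theorem. The vertices of $\tau$ correspond to a strictly ascending chain of faces of $\Sigma$, so their dimensions, and hence their ``colors'' in $[d+1]$, are pairwise distinct. Splitting the vertices of $\tau$ by the sign of the last coordinate of their labels gives disjoint sets $S, T \subset [m]$ of indices $i$ together with the induced map $\pi \colon S \cup T \to [d+1]$ reading off the color, injective because the colors along a chain are distinct. Lemma~\ref{lem:radon} applied to the zero of $f$ in $\tau$ then gives $\conv\{x_i^{(\pi(i))} \mid i \in S\} \cap \conv\{x_i^{(\pi(i))} \mid i \in T\} \ne \emptyset$. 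Since the vertices of $\tau$ all lie in a single facet of $\Sigma$ of diameter less than $\varepsilon$, letting $\varepsilon \to 0$ and invoking compactness of $S^d$ exactly as in the proof of Theorem~\ref{thm:fan-gen} delivers a common point of $\bigcap_{i \in S} A_i^{(\pi(i))} \cap \bigcap_{i \in T} (-A_i^{(\pi(i))})$.

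The main technical point I anticipate is confirming that distinct vertices of $\tau$ give rise to distinct indices $i \in [m]$, which is needed for $\pi$ to be well defined on the set $S \cup T$ rather than on a multiset. I expect this to follow from a careful choice of representatives and indices when defining $f$, possibly combined with a mild generic perturbation of the point sets $X^{(j)}$ which can be absorbed in the limiting argument as $\varepsilon \to 0$, together with the hypothesis $A_i^{(j)} \cap (-A_i^{(\ell)}) = \emptyset$ for $j \ne \ell$ that rules out certain conflicting labels on nearby vertices.
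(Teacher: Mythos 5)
Your proposal is essentially the paper's own proof: the identical Su-style construction on the barycentric subdivision of a fine antipodally symmetric triangulation, with the dimension of the subdivided face selecting which covering labels each vertex, followed by the Borsuk--Ulam theorem, Lemma~\ref{lem:radon}, and the compactness argument as $\varepsilon \to 0$. The technical point you flag is handled in the paper exactly by the ingredients you name---the hypothesis $A_i^{(j)} \cap (-A_i^{(\ell)}) = \emptyset$ together with the choice of $\varepsilon$ prevents an index from appearing with both signs inside a single facet (so $S$ and $T$ are disjoint), and the distinct dimensions along a chain make the color assignment injective---with no generic perturbation of the $X^{(j)}$ needed or used.
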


\begin{proof}
    Let $\varepsilon > 0$ such that any $A_i^{(j)}$ is at distance at least~$\varepsilon$ from~$\bigcup_{\ell\ne j} (-A_i^{(\ell)})$. Let $\Sigma$ be an antipodally symmetric triangulation of~$S^d$, where every facet has diameter less than~$\varepsilon$. Let $V$ denote the vertex set of the barycentric subdivision~$\Sigma'$ of~$\Sigma$, that is, there is a one-to-one correspondence between faces $\sigma$ of $\Sigma$ and vertices~$v_\sigma \in V$, where $v_{\sigma_1}, \dots, v_{\sigma_k}$ form a face of the barycentric subdivision of~$\Sigma$ whenever (after possibly reordering the~$\sigma_i$) $\sigma_1 \subset \sigma_2 \subset \dots \subset \sigma_k$. In particular, the vertices of any face of~$\Sigma'$ correspond to faces of~$\Sigma$ that have pairwise distinct dimensions. Let $f \colon V \to \R^d$ be a function with the property that $f(v_\sigma) = (x_i^{(\dim\sigma)}, 1)$ for $v_\sigma \in A_i^{(\dim \sigma)}$ and $f(v_\sigma) = (-x_i^{(\dim\sigma)}, -1)$ for $v_\sigma \in (-A_i^{(\dim \sigma)})$. If $v_\sigma$ is in multiple~$A_i$ then choose one arbitrarily, but in such a way that $f(-v_\sigma) = -f(v_\sigma)$. 

    We can think of $f$ as a map $f\colon \Sigma' \to \R^d$ by linearly extending it to the faces of~$\Sigma'$. By Lemma~\ref{lem:radon} the zeros of~$f$ then precisely correspond to the Radon pairs $(A,B)$ for~$\bigcup_j X^{(j)}$, where $A\cup B$ contains at most one point in each~$X^{(j)}$. Here we use that by the choice of~$\varepsilon$ if $f(v) = -f(w)$ then $v$ and~$w$ are not in a common face of~$\Sigma$. By the Borsuk--Ulam theorem, $f$ has a zero. Now let $\varepsilon$ go to zero and use compactness of~$S^d$. 
\end{proof}

\begin{rem}
    We add two comments about generalizations of Theorem~\ref{thm:fan-gen-col}:
    \begin{compactitem}[(1)]
        \item Analogous reasoning yields a colorful generalization of Theorem~\ref{thm:fan-gen-cont}.
        \item The same reasoning works without changes even if the condition $A_i^{(j)} \cap (-A_i^{(\ell)}) =\emptyset$ for all $i \in [m]$ and $j \ne \ell\in[d+1]$ is dropped. In this case $\pi$ is no longer necessarily injective. In this case, if $\bigcup_j X^{(j)}$ is assumed to be generic, we get a partition $S\sqcup T$ of~$[d+1]$ and points $x_{i_s}^{(s)} \in X^{(s)}$ for $s\in S$ and $x_{i_t}^{(t)} \in X^{(t)}$ for $t\in T$ such that 
        \[\conv \{x_{i_s}^{(s)} \, |\, s\in S\} \cap\conv \{x_{i_t}^{(t)} \, |\, t\in T\}\ne\emptyset  \ \text{and} \ \bigcap_{s \in S} A_{i_s}^{(s)} \cap \bigcap_{t \in T} (-A_{i_t}^{(t)}) \ne \emptyset.\]
    \end{compactitem}
\end{rem}

\section{Consequences and context: Topology}
\label{sec:consequences-top}

We collect some consequences of the main results proved in the preceding section. Here we focus on topological consequences, such as intersection combinatorics within sphere coverings and non-embeddability results.

\subsection{Fan's theorem and sphere coverings}
Fan's theorem, Theorem~\ref{thm:fan}, is the special case of Theorem~\ref{thm:fan-gen} that $X$ is placed along the moment curve $\gamma(t) = (t, t^2, \dots, t^{d-1})$ in~$\R^{d-1}$. This follows from Gale's evenness criterion~\cite{gale1963neighborly}: Two disjoint sets $A$ and $B$ of in total $d+1$ points along~$\gamma$ have intersecting convex hulls if and only if between any two points of~$A$ there is at least one point of~$B$ along~$\gamma$ and vice versa, and if $A$ and $B$ involve less than $d+1$ points then their convex hulls do not intersect. By using the colorful generalization, Theorem~\ref{thm:fan-gen-col}, instead, we obtain the following colorful Fan's theorem:

\begin{cor}
\label{cor:fan-col}
    Let $m$ be a positive integer, and let $A_1^{(j)}, \dots, A_m^{(j)} \subset S^d$ be closed sets for $j \in [d+1]$ such that $A_i^{(j)} \cap (-A_i^{(\ell)}) =\emptyset$ for all $i \in [m]$ and for all $j \ne \ell\in [d+1]$. Further assume that $\bigcup_i A_i^{(j)} \cup \bigcup_i (-A_i^{(j)}) = S^d$ for all $j \in [d+1]$. Then there are indices $i_1 < i_2 < \dots < i_{d+1}$ and a bijection~$\pi\colon\{i_1, \dots, i_{d+1}\} \to [d+1]$ such that $\bigcap_{k = 1}^{d+1} (-1)^k A_{i_k}^{(\pi(k))} \ne \emptyset$.
\end{cor}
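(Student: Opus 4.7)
The plan is to reduce Corollary~\ref{cor:fan-col} to Theorem~\ref{thm:fan-gen-col} by specializing all $d+1$ colored point configurations to a single cyclic configuration on the moment curve, in exact parallel with the derivation of Fan's theorem from Theorem~\ref{thm:fan-gen} described just above the corollary.

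Concretely, I take $x_i^{(j)} := \gamma(i) = (i, i^2, \dots, i^{d-1}) \in \R^{d-1}$ for every $i \in [m]$ and every $j \in [d+1]$, so the sets $X^{(j)}$ all coincide and lie on~$\gamma$. Applying Theorem~\ref{thm:fan-gen-col} produces disjoint subsets $S, T \subset [m]$ and an injection $\pi \colon S \cup T \to [d+1]$ such that
\[
    \conv\{\gamma(i) : i \in S\} \cap \conv\{\gamma(i) : i \in T\} \ne \emptyset \quad\text{and}\quad \bigcap_{i \in S} A_i^{(\pi(i))} \cap \bigcap_{i \in T} (-A_i^{(\pi(i))}) \ne \emptyset.
\]
By Gale's evenness criterion, as recalled in the paragraph preceding the corollary, a Radon pair of points on~$\gamma$ must have $|S| + |T| = d+1$ exactly, and the elements of $S$ and $T$ must strictly alternate along the natural ordering of~$\gamma$. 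Injectivity of $\pi$ into~$[d+1]$ then forces $\pi$ to be a bijection, and writing $S \cup T = \{i_1 < i_2 < \cdots < i_{d+1}\}$, one of $S, T$ equals $\{i_k : k \text{ odd}\}$ while the other equals $\{i_k : k \text{ even}\}$.

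Finally, replacing the witness $x$ of the non-empty intersection by its antipode~$-x$ swaps each $A_i^{(\pi(i))}$ with $-A_i^{(\pi(i))}$ and thus interchanges which of $S, T$ carries the plus sign; so after this cosmetic adjustment we may assume $S = \{i_k : k \text{ even}\}$ and $T = \{i_k : k \text{ odd}\}$, producing the desired $\bigcap_{k=1}^{d+1} (-1)^k A_{i_k}^{(\pi(k))} \ne \emptyset$. No step presents a real technical obstacle: the heavy lifting happens inside Theorem~\ref{thm:fan-gen-col}, and the only content beyond invoking it is the standard combinatorial reading of Gale's criterion, now applied in the colorful setup.
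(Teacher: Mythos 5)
Your proposal is correct and is essentially the paper's own derivation: the paper obtains Corollary~\ref{cor:fan-col} by applying Theorem~\ref{thm:fan-gen-col} with every $X^{(j)}$ placed along the moment curve and reading off the alternating structure from Gale's evenness criterion, exactly as you do. Your extra observations --- that injectivity of $\pi$ together with the fact that fewer than $d+1$ points on the moment curve admit no Radon pair forces $|S\cup T|=d+1$, and that passing to the antipodal witness fixes which part carries the plus signs --- are the right (routine) details to make this reduction explicit.
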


The case $m=d+1$ of this is our earlier result~{\cite[Thm.~3.3]{FrickWellner2023}}. There this result is stated for an arbitrary, not necessarily alternating, choice of signs. This also follows from Theorem~\ref{thm:fan-gen-col} since for $d+1$ points in~$\R^{d-1}$ any partition of the points may be prescribed as a Radon pair. (The case of one empty part can be handled by increasing the dimension by one.) See Meunier and Su~\cite{MeunierSu2019} for a different but related colorful Fan's theorem; see~\cite{FrickWellner2023} for a discussion of the differences. 

All the ``usual'' corollaries of Fan's theorem, such as the Lusternik--Schnirelman theorem~\cite{lusternik1930topological}, Tucker's combinatorial lemma~\cite{Tucker1945}, the KKM theorem and Brouwer's fixed point theorem, are special cases of Theorem~\ref{thm:fan-gen} as well, and one can thus derive colorful generalization from Theorem~\ref{thm:fan-gen-col} for free. For the KKM theorem and fixed-point theorems this is discussed in~\cite{FrickWellner2023} and we comment on this more in Subsection~\ref{subsec:fixedpt}. Recently Chase, Chornomaz, Moran, and Yehudayoff~\cite{chase2024local} proved a ``local'' variant of the Lusternik--Schnirelman theorem and applied it to Kneser-type colorings and PAC learning results. One version of their result asserts that for any covering of $S^d$, $d\ge 1$, by closed sets $A_1, \dots, A_m$ with $A_i \cap (-A_i) = \emptyset$ for all~$i \in [m]$, at least~$\lceil \tfrac{d+3}{2} \rceil$ of the $A_i$ intersect. This result is also given in~\cite[Thm.~3.4]{simonyi2009local}, where it is attributed to~\cite{ izydorek1995antipodal, jaworowski2000periodic, zbMATH03478089}.
We will derive a colorful generalization from Corollary~\ref{cor:fan-col}. We find it instructive to first derive the following variant:

\begin{cor}
\label{cor:localLS-signs}
     Let $A_1^{(j)}, \dots, A_m^{(j)} \subset S^d$, $j\in [d+2]$, be closed sets such that $A_i^{(j)} \cap(-A_i^{(\ell)}) = \emptyset$ for all~$i \in [m]$ and all~$j \ne \ell \in [d+2]$. Further assume that $\bigcup_i A_i^{(j)} = S^d$ for all $j \in [d+2]$. Then there are disjoint sets $S, T \subset [m]$ of sizes~$\lceil\tfrac{d+2}{2} \rceil$ and~$\lfloor \tfrac{d+2}{2} \rfloor$, respectively, and a bijection $\pi\colon S\cup T \to [d+2]$ such that $\bigcap_{i\in S} A_i^{(\pi(i))} \cap \bigcap_{i\in T} (-A_i^{(\pi(i))}) \ne \emptyset$.
\end{cor}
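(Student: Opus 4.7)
The plan is to lift the problem from $S^d$ to $S^{d+1}$ and apply Corollary~\ref{cor:fan-col} there, since a sphere of dimension $d+1$ is its natural setting for a system of $d+2$ coverings. Write $S^{d+1} = \{(y, z) \in \R^{d+1} \times \R : \|y\|^2 + z^2 = 1\}$ with equator $\{z = 0\}$ identified with $S^d$, and define the positive half-suspension
\[
    \tilde A_i^{(j)} \ := \ \{(\sqrt{1-z^2}\,x,\, z) \in S^{d+1} : x \in A_i^{(j)}, \, z \in [0,1]\}
\]
for each $i \in [m]$ and $j \in [d+2]$. Each $\tilde A_i^{(j)}$ is closed (the continuous image of a compact set) and lies in the closed upper hemisphere of $S^{d+1}$.

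Next I would verify that Corollary~\ref{cor:fan-col} applies to the $d+2$ families $\{\tilde A_i^{(j)}\}_{i\in[m]}$, $j \in [d+2]$, on $S^{d+1}$. For $j \ne \ell$, the sets $\tilde A_i^{(j)} \subseteq \{z \ge 0\}$ and $-\tilde A_i^{(\ell)} \subseteq \{z \le 0\}$ can meet only at $z = 0$, where their intersection is $\{(x,0) : x \in A_i^{(j)} \cap (-A_i^{(\ell)})\} = \emptyset$. For the covering $\bigcup_i \tilde A_i^{(j)} \cup \bigcup_i -\tilde A_i^{(j)} = S^{d+1}$, a point $(y, z)$ with $z \ge 0$ has $y = \sqrt{1-z^2}\,x$ for some $x \in S^d$, which lies in some $A_i^{(j)}$ by the hypothesis $\bigcup_i A_i^{(j)} = S^d$; the case $z \le 0$ is symmetric via negatives. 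Corollary~\ref{cor:fan-col} then yields $i_1 < \dots < i_{d+2} \in [m]$, a bijection $\pi\colon \{i_1,\dots,i_{d+2}\} \to [d+2]$, and a point $\tilde x \in \bigcap_{k=1}^{d+2} (-1)^k \tilde A_{i_k}^{(\pi(k))}$. Both even and odd $k$ occur in $\{1,\dots,d+2\}$, so the $z$-coordinate of $\tilde x$ must be simultaneously $\ge 0$ and $\le 0$; hence $\tilde x = (x, 0)$ on the equator, with $x \in S^d$.

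Unpacking the containments at $z = 0$ gives $x \in A_{i_k}^{(\pi(k))}$ for even $k$ and $x \in -A_{i_k}^{(\pi(k))}$ for odd $k$, producing a positive side of size $\lfloor (d+2)/2 \rfloor$ and a negative side of size $\lceil (d+2)/2 \rceil$. Replacing $x$ by $-x$ and swapping which of the two sides plays the role of $S$ and $T$ delivers the convention $|S| = \lceil (d+2)/2 \rceil$, $|T| = \lfloor (d+2)/2 \rfloor$ while keeping $\pi$ intact. The main obstacle, and essentially the only non-bookkeeping step, is designing the extensions $\tilde A_i^{(j)}$ so that disjointness across different $j$ and covering of $S^{d+1}$ both hold---which is exactly where the stronger positive-only covering assumption $\bigcup_i A_i^{(j)} = S^d$ of the corollary becomes essential, since the weaker Fan-type assumption allowing negatives would fail to cover the open upper hemisphere of $S^{d+1}$ by the $\tilde A_i^{(j)}$ alone.
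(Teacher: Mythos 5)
Your proof is correct and follows essentially the same route as the paper: the paper also embeds $S^d$ as the equator of $S^{d+1}$, replaces each $A_i^{(j)}$ by the union of geodesic arcs joining it to the north pole (your $\tilde A_i^{(j)}$ is exactly this cone), applies Corollary~\ref{cor:fan-col} on $S^{d+1}$, and observes that the presence of both signs forces the intersection point onto the equator. The final relabeling to arrange $|S| = \lceil(d+2)/2\rceil$ is handled correctly.
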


\begin{proof}
    Embed $S^d$ into $S^{d+1}$ as an equator. We will refer to the two points in $S^{d+1}$ at maximal distance from~$S^d$ as the north and south pole. Define $B_i^{(j)} \subset S^{d+1}$ to be the set of all points on geodesics connecting a point in $A_i^{(j)}$ to the north pole. Since $\bigcup_i A_i^{(j)} = S^d$ for all $j \in [d+2]$ we have that $\bigcup_i B_i^{(j)} \cup \bigcup_i (-B_i^{(j)}) = S^{d+1}$ for all $j \in [d+2]$. Further $B_i^{(j)} \cap (-B_i^{(\ell)}) = \emptyset$ for all $i \in [m]$ and all $j \ne \ell \in [d+2]$. By Corollary~\ref{cor:fan-col} there are $i_1 < i_2 < \dots < i_{d+2}$ and a bijection~$\pi\colon\{i_1, \dots, i_{d+2}\} \to [d+2]$ such that $\bigcap_{k = 1}^{d+2} (-1)^k B_{i_k}^{(\pi(k))} \ne \emptyset$. As both plus and minus signs appear in this intersection, any such intersection point can only be on~$S^d$ and thus $\bigcap_{k = 1}^{d+2} (-1)^k A_{i_k}^{(\pi(k))} \ne \emptyset$.
\end{proof}

In particular, Corollary~\ref{cor:localLS-signs} implies that a covering of $S^d$ with closed sets that each do not contain antipodes must use at least $d+2$ sets. This is the Lusternik--Schnirelman theorem~\cite{lusternik1930topological}, and we can think of Corollary~\ref{cor:localLS-signs} as a colorful generalization of this result. The uncolored version of Corollary~\ref{cor:localLS-signs} for $m=d+2$, with the additional insight that in this case the partition $S \sqcup T$ may be arbitrarily prescribed, is called the Bacon--Tucker theorem~\cite{bacon1966equivalent, Tucker1945}. Similar arguments now give the colorful generalization of the result of Chase, Chornomaz, Moran, and Yehudayoff:

\begin{cor}
\label{cor:localLS}
     Let $A_1^{(j)}, \dots, A_m^{(j)} \subset S^d$, $j\in [d+3]$, be closed sets such that $A_i^{(j)} \cap(-A_i^{(\ell)}) = \emptyset$ for all~$i \in [m]$ and all~$j \ne \ell \in [d+3]$. Further assume that $\bigcup_i A_i^{(j)} = S^d$ for all $j \in [d+3]$. Then there is a set $S\subset [m]$ of size~$\lceil\tfrac{d+3}{2} \rceil$ and an injective function $\pi\colon S \to [d+3]$ such that $\bigcap_{i\in S} A_i^{(\pi(i))} \ne \emptyset$.
\end{cor}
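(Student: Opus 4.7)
The plan is to mirror the proof of Corollary~\ref{cor:localLS-signs}, lifting $S^d$ into $S^{d+2}$ rather than $S^{d+1}$ so that the number of labels $d+3$ matches the hypothesis of Corollary~\ref{cor:fan-col} on $S^{d+2}$ and an alternating intersection can be forced down two codimensions onto $S^d$. First I would embed $S^d$ as the subsphere $\{(x,0,0) : |x|=1\} \subset S^{d+2} = \{(x,t,u) : |x|^2 + t^2 + u^2 = 1\}$ and, for each $(i,j) \in [m] \times [d+3]$, construct a closed set $B_i^{(j)} \subset S^{d+2}$ satisfying (i) $B_i^{(j)} \cap (-B_i^{(\ell)}) = \emptyset$ for all $i$ and all $j \ne \ell$, (ii) $\bigcup_i B_i^{(j)} \cup \bigcup_i (-B_i^{(j)}) = S^{d+2}$ for each $j$, and (iii) any alternating intersection of the form $\bigcap_{k=1}^{d+3}(-1)^k B_{i_k}^{(\pi(k))}$ is forced onto $S^d$. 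Guided by the one-pole cone in the proof of Corollary~\ref{cor:localLS-signs}, I would build $B_i^{(j)}$ as a two-fold cone on $A_i^{(j)}$ through suitably chosen poles lying off $S^d$ in both the $t$- and $u$-directions.

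Next I would apply Corollary~\ref{cor:fan-col} to this family on $S^{d+2}$ with the $d+3$ labels, obtaining indices $i_1 < \cdots < i_{d+3}$, a bijection $\pi\colon \{i_1,\dots,i_{d+3}\} \to [d+3]$, and a point $p \in \bigcap_{k=1}^{d+3} (-1)^k B_{i_k}^{(\pi(k))}$. Since both signs appear, property~(iii) pins $p$ to $S^d$, where $B_i^{(j)}$ and $-B_i^{(j)}$ restrict to $A_i^{(j)}$ and $-A_i^{(j)}$, respectively. The signs $(-1)^k$, $k \in [d+3]$, split $\{i_1,\dots,i_{d+3}\}$ into a ``$+$'' part of size $\lfloor (d+3)/2 \rfloor$ and a ``$-$'' part of size $\lceil (d+3)/2 \rceil$; after replacing $p$ by $-p$ if necessary, I would take $S$ to be the larger side, which gives $|S| = \lceil (d+3)/2 \rceil$, $\pi|_S \colon S \to [d+3]$ injective, and $\bigcap_{i \in S} A_i^{(\pi(i))} \ne \emptyset$.

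The hard part will be constructing the $B_i^{(j)}$'s with all three properties at once: the naive ``upper-$t$'' cone $B_i^{(j)} = \{(x,t,u) \in S^{d+2} : t \ge 0,\ x/|x| \in A_i^{(j)}\}$ (closed up at $|x|=0$) secures the cover and antipodal conditions only on the locus $|x|>0$ but pulls the ``sub-poles'' $(0,0,\pm 1)$ into every $B_i^{(j)}$ upon closure, breaking~(i), and in any case only forces a collapse onto $\{t=0\} \cong S^{d+1}$ rather than onto $S^d$. Overcoming this needs a $j$-dependent deformation introducing a second conical constraint in the $u$-direction so that alternating signs simultaneously force $t=0$ and $u=0$ while keeping the sub-polar circles disjoint from antipodal sets, analogous to the two-fold cone trick underlying the non-colorful local Lusternik--Schnirelmann theorem. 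As a fallback, I could apply Corollary~\ref{cor:fan-col} on $S^{d+1}$ with only $d+2$ of the labels (à la Corollary~\ref{cor:localLS-signs}) and use the excluded label in a pigeonhole argument to extend the positive side by one, at the cost of a case analysis for when the extra index coincides with an already-selected one.
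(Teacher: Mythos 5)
Your high-level plan --- lift to $S^{d+2}$, apply Corollary~\ref{cor:fan-col} there with $d+3$ labels, and use the sign pattern to force the resulting intersection point down two dimensions to $S^d$ --- is exactly the paper's strategy. But you have correctly sensed, and not resolved, the central difficulty: a straightforward iterated cone or a ``two-fold cone in $t$ and $u$'' cannot work. The reason is quantitative, not just the sub-pole closure issue you flag: after one coning step, $\bigcup_i B_i^{(j)}$ only covers the (closed) northern hemisphere of $S^{d+1}$, not all of $S^{d+1}$; coning this again onto the north pole of $S^{d+2}$ gives a family whose union together with its antipode covers only \emph{half} of $S^{d+2}$, so hypothesis~(ii) of Corollary~\ref{cor:fan-col} simply fails, and no choice of poles or $j$-dependent deformation repairs a deficit of this kind with the same $m$ index set.

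The idea you are missing is that the second lift must \emph{double the index set}. The paper first cones $A_i^{(j)}$ to $B_i^{(j)} \subset S^{d+1}$ exactly as in Corollary~\ref{cor:localLS-signs}, and then defines $2m$ sets on $S^{d+2}$: $C_i^{(j)}$ is the cone on $B_i^{(j)}$ to the new north pole for $i \in [m]$, and $C_{m+i}^{(j)}$ is the cone on $-B_i^{(j)}$ to the new north pole for $i \in [m]$. Because $\bigcup_i B_i^{(j)} \cup \bigcup_i (-B_i^{(j)}) = S^{d+1}$, the doubled family $\{C_i^{(j)}\}_{i\in[2m]}$ together with its antipodes now covers $S^{d+2}$, and the hypothesis $A_i^{(j)} \cap (-A_i^{(\ell)}) = \emptyset$ still gives $C_i^{(j)} \cap (-C_i^{(\ell)}) = \emptyset$. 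Applying Corollary~\ref{cor:fan-col} (with $m$ replaced by $2m$) gives $i_1 < \cdots < i_{d+3}$ in $[2m]$ and an alternating intersection of the $C$'s; since $(-1)^k$ takes both signs, the intersection drops to $S^{d+1}$, where $\pm C_{i_k}$ restricts to $\pm\varepsilon_{i_k} B_{i'_k}$ with $\varepsilon_\ell = 1$ for $\ell \le m$ and $-1$ otherwise, $i'_k$ the index reduced to $[m]$. The product $(-1)^k\varepsilon_{i_k}$ cannot be constant because $\varepsilon$ is monotone in $i_k$ while $(-1)^k$ alternates, so both signs again appear and the intersection drops further to $S^d$; taking the majority side and flipping if needed gives the conclusion. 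Your fallback --- Corollary~\ref{cor:fan-col} on $S^{d+1}$ with only $d+2$ labels plus a pigeonhole on the omitted label --- is too vague to assess and does not obviously close the gap of one extra element on the positive side when $d$ is even; the doubling trick is the mechanism the proof actually needs.
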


\begin{proof}
    We now repeat a similar argument as in the proof of Corollary~\ref{cor:localLS-signs}. We construct the sets $B_i^{(j)} \subset S^{d+1}$ in the same way. Then embed $S^{d+1}$ into~$S^{d+2}$ as an equator determined by an orthogonal line that intersects~$S^{d+2}$ in the north and the south pole. Let $C_i^{(j)} \subset S^{d+2}$ be the set of points on geodesics from a point in $B_i^{(j)}$ to the north pole. Additionally, let $C_{m+i}^{(j)}$ be the set of points on geodesics from a point in $-B_i^{(j)}$ to the north pole. We still have that $C_i^{(j)} \cap (-C_i^{(\ell)}) = \emptyset$ for all $i \in [m]$ and all $j \ne \ell \in [d+3]$. Since $\bigcup_i B_i^{(j)} \cup \bigcup_i (-B_i^{(j)}) = S^{d+1}$ for all $j \in [d+3]$, we have that $\bigcup_i C_i^{(j)} \cup \bigcup_i (-C_i^{(j)}) = S^{d+2}$ for all $j \in [d+3]$. By Corollary~\ref{cor:fan-col} there are $i_1 < i_2 < \dots < i_{d+3}$ and a bijection~$\pi\colon\{i_1, \dots, i_{d+3}\} \to [d+3]$ such that $\bigcap_{k = 1}^{d+3} (-1)^k C_{i_k}^{(\pi(k))} \ne \emptyset$. Since both signs appear, any such intersection point must be on the equator~$S^{d+1}$, and so $\bigcap_{k = 1}^{d+3} (-1)^k \varepsilon_{i_k}B_{i_k}^{(\pi(k))} \ne \emptyset$, where $\varepsilon_{\ell} = 1$ if $\ell \le m$ and $\varepsilon_{\ell} = -1$ if $\ell > m$. Thus $(-1)^k\cdot \varepsilon_{i_k}$ must take both values $-1$ and~$+1$, and so by the same reasoning as before, any intersection actually occurs in $S^d$ and thus $\bigcap_{k = 1}^{d+3} (-1)^k \varepsilon_{i_k} A_{i_k}^{(\pi(k))} \ne \emptyset$. By potentially flipping the sign of all sets involved in the intersection, we may assume that at least $\lceil\tfrac{d+3}{2} \rceil$ of the signs are positive, finishing the proof.
\end{proof}

The bound of~$\lceil\tfrac{d+3}{2} \rceil$ cannot be improved as in shown in~\cite{chase2024local} for the non-colorful version.

\subsection{Radon-type intersection results and non-embeddability results}
\label{subsec:radon}
Let $X \subset \R^{d-1}$ be a set of $m$ points that we identify with~$[m]$. By exhibiting closed sets $A_1, \dots, A_m \subset S^d$ that together with their negatives $-A_i$ cover the sphere and with $A_i\cap(-A_i) =\emptyset$ for all~$i\in [m]$, we can constrain the combinatorics of Radon pairs $(S,T)$ for~$X$: Among all pairs of disjoint sets~$(S,T)$ with $\bigcap_{i\in S} A_i\cap \bigcap_{i\in T} (-A_i) \ne\emptyset$ there must be a Radon pair. Theorem~\ref{thm:fan-gen-cont} provides a continuous generalization, a non-embeddability result. Note that the construction of one appropriate antipodal sphere covering is sufficient to obstruct the existence of any embedding of some simplicial complex into~$\R^{d-1}$. We give some examples:

\begin{ex}
For $m=d+1$ there are closed sets $A_1, \dots, A_{d+1} \subset S^d$ that, together with their antipodal copies~$-A_i$, cover the entire sphere and satisfy $A_i \cap (-A_i) = \emptyset$; for example, $A_i = \{x\in S^d\, |\, x_i \ge \frac{1}{\sqrt{d+1}}\}$ is such a collection of sets. In particular, Theorem~\ref{thm:fan-gen} implies that there must be a Radon partition among any $m = d+1$ points in~$\R^{d-1}$. This is Radon's lemma~\cite{radon1921mengen}. Theorem~\ref{thm:fan-gen-cont} thus implies the topological Radon theorem~\cite{Bajmoczy1979} that for any continuous map $f\colon \Delta_d \to \R^{d-1}$ two disjoint faces have intersecting images. 
\end{ex}

\begin{ex}
Let $r_0$ be the distance from a vertex to the barycenter in a regular $(d+1)$-simplex of the same side length as a regular $(2d+2)$-simplex inscribed into the unit sphere~$S^{2d+1}$. Thus for $r < r_0$ no $d+2$ balls of radius~$r$ around the vertices of the inscribed $(2d+2)$-simplex intersect. Let $A_1, \dots, A_{2d+3}$ be the balls around the vertices of this simplex for some $r$ close to but less than~$r_0$. Then the $A_i$ and $-A_i$ cover~$S^{2d+1}$. Since no $d+2$ of the $A_i$ intersect, and the same is true for the~$-A_i$, Theorem~\ref{thm:fan-gen} shows that for any point set of size $2d+3$ in~$\R^{2d}$ the convex hulls of two disjoint sets of size $d+1$ intersect. This and more generally its continuous generalization, which follows from Theorem~\ref{thm:fan-gen-cont}, is van Kampen's theorem; see~\cite{van1933komplexe}.
\end{ex}

\begin{ex}
Flores' result~\cite{flores1933} that the $(d+1)$-fold join of a $3$-point space does not embed into~$\R^{2d}$ also follows from constructing an appropriate sphere covering and invoking Theorem~\ref{thm:fan-gen-cont}. There are three pairwise disjoint closed sets $A, B, C$ in the circle $S^1$ that together with $-A, -B, -C$ cover the entire circle. From this and since $S^{2d+1}$ is the join of $d+1$ circles, it is easy to construct $A_i, B_i, C_i \subset S^{2d+1}$ for $i \in [d+1]$ that together with their antipodal copies cover the entire sphere and such that $A_i, B_i, C_i$ are pairwise disjoint for every~$i \in [d+1]$. Flores' result now follows from Theorem~\ref{thm:fan-gen-cont}.
\end{ex}

\subsection{Odd maps without common zeros}
In his original work, Fan derives a corollary of Theorem~\ref{thm:fan} that for any $m$ odd maps~$f_1, \dots, f_m \colon S^d\to \R$ there is a point~$x_0 \in S^d$ and $i_1 < i_2 < \dots < i_{d+1}$ where $f_{i_j}$ have maximal absolute value among all~$f_i$, and such that their signs alternate. We can now generalize this beyond alternating signs:

\begin{cor}
\label{cor:odd}
    Let $m$ be a positive integer, and let $X = \{x_1, \dots, x_m\} \subset \R^{d-1}$ be a set of $m$ points. Let $f_1, \dots, f_m \colon S^d \to \R$ be odd maps without common zero. Then there are disjoint subsets $S, T \subset [m]$ and a point $x_0\in S^d$ such that $\conv\{x_i\, |\, i \in S\} \cap \conv\{x_i\, |\, i \in T\} \ne \emptyset$ and
    \[f_i(x_0) = \max_{j\in [m]} |f_j(x_0)| \ \text{for} \ i \in S \ \text{and} \
    f_i(x_0) = -\max_{j\in [m]} |f_j(x_0)| \ \text{for} \ i \in T .\]
\end{cor}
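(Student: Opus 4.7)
The plan is to reduce this to Theorem~\ref{thm:fan-gen} by converting the odd maps $f_1,\dots,f_m$ into a sphere covering to which the theorem applies. Let $M(x) = \max_{j \in [m]} |f_j(x)|$; since the $f_j$ have no common zero and $S^d$ is compact, $M$ is a positive continuous function. I would then define, for each $i \in [m]$, the closed set
\[
    A_i = \{x \in S^d \,|\, f_i(x) = M(x)\}.
\]
Closedness is immediate since $f_i$ and $M$ are continuous.

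The first thing to verify is that these sets meet the hypotheses of Theorem~\ref{thm:fan-gen}. Using that each $f_j$ is odd, so $|f_j(-x)| = |f_j(x)|$ and hence $M(-x) = M(x)$, one checks that
\[
    -A_i = \{x \in S^d \,|\, f_i(x) = -M(x)\}.
\]
Then $A_i \cap (-A_i)$ is the set where $M(x) = 0$, which is empty. Moreover, for every $x \in S^d$ some index $i$ attains $|f_i(x)| = M(x)$, so $x$ lies in $A_i$ or in $-A_i$; thus $\bigcup_i A_i \cup \bigcup_i (-A_i) = S^d$.

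Now I apply Theorem~\ref{thm:fan-gen} to the sets $A_1, \dots, A_m$ and the point configuration $X = \{x_1, \dots, x_m\}$. This gives disjoint subsets $S, T \subset [m]$ with $\conv\{x_i : i \in S\} \cap \conv\{x_i : i \in T\} \ne \emptyset$ and a point $x_0 \in \bigcap_{i \in S} A_i \cap \bigcap_{i \in T} (-A_i)$. By the definitions of $A_i$ and $-A_i$, this point satisfies $f_i(x_0) = M(x_0) = \max_j |f_j(x_0)|$ for $i \in S$ and $f_i(x_0) = -M(x_0) = -\max_j |f_j(x_0)|$ for $i \in T$, which is exactly the desired conclusion.

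There is no real obstacle here; once the correct sets $A_i$ are identified, the argument is a direct translation. The only point that requires brief care is the verification that $A_i$ and $-A_i$ arise symmetrically from the oddness of the $f_j$ and that they assemble into a covering of $S^d$, and both are routine.
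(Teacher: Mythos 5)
Your proof is correct and follows essentially the same approach as the paper: define $A_i = \{x \in S^d : f_i(x) = \max_j |f_j(x)|\}$, verify the covering and antipode-freeness hypotheses using oddness and the absence of a common zero, and apply Theorem~\ref{thm:fan-gen}. Your write-up is somewhat more explicit about the routine verifications (e.g., that $M$ is positive and that $-A_i = \{x : f_i(x) = -M(x)\}$), but the argument is the same.
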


\begin{proof}
    Let $A_i = \{x\in S^d\, |\, f_i(x) = \max_{j\in [m]} |f_j(x_0)|\}$ for $i \in [m]$. The sets $A_i, -A_i$, $i \in [m]$ are closed and cover~$S^d$. Since the $f_i$ do not have a common zero, we have that $A_i \cap (-A_i) = \emptyset$. Now apply Theorem~\ref{thm:fan-gen}.
\end{proof}

\subsection{Fixed-point theorems}
\label{subsec:fixedpt}
The Borsuk--Ulam theorem implies Brouwer's fixed point theorem: Any continuous map~$f$ from the closed unit ball $B^d$ to itself has a fixed point, that is, there is an $x\in B^d$ with $f(x) = x$. In the same way that the Borsuk--Ulam theorem may be equivalently stated in terms of set coverings of the sphere as the Lusternik--Schnirelman theorem, Brouwer's fixed point theorem can equivalently be stated in terms of set coverings of the ball as the KKM theorem~\cite{KnasterKuratowskiMazurkiewicz1929}. It states that for any covering of the $(m-1)$-simplex~$\Delta_{m-1}$ with $m$ closed sets, one for each vertex, such that every face is covered by the sets corresponding to its vertices, the sets all have a point in common. Since our main results generalize the Borsuk--Ulam theorem, one may wonder how Brouwer's fixed point theorem, or equivalently the KKM theorem, is strengthened by using our results instead of the Borsuk--Ulam theorem in these implications. Here we point out that the analogous results extending the KKM theorem have been proven already.

The KKM theorem was generalized to the KKMS theorem by Shapley~\cite{Shapley1972} and even further to a result about polytope coverings by Komiya~\cite{Komiya1994}. For each subsequent extension a colorful variant can be found in the literature: The colorful KKM theorem of Gale~\cite{Gale1984}, the colorful KKMS theorem of Shih and Lee~\cite{shih1993}, and the colorful Komiya theorem of the first author and Zerbib~\cite{FrickZerbib2019}. 
We state Komiya's theorem here so that the reader may compare its statement with Theorem~\ref{thm:fan-gen}.

\begin{thm}[Komiya~\cite{Komiya1994}]
\label{thm:komiya}
     Let $P \subset \R^d$ be a $d$-dimensional polytope. For every face $\sigma$ of~$P$ let $y_\sigma \in \sigma$ be a point and let $A_\sigma \subset P$ be a closed set. Suppose that $\sigma \subset \bigcup_{\tau \subset \sigma} A_\tau$ for every face $\sigma$ of~$P$. Then there are faces $\sigma_1, \dots, \sigma_k$ of~$P$ such that
     \[
         y_P \in \conv\{y_{\sigma_1}, \dots, y_{\sigma_k}\} \ \text{and} \ \bigcap_{i \in [k]} A_{\sigma_i} \ne \emptyset.
     \]
\end{thm}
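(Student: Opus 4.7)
The plan is to mimic the classical KKM proof of the KKMS theorem: build a face-preserving continuous self-map $f \colon P \to P$ from a partition of unity subordinate to the closed cover $\{A_\sigma\}$, apply a topological degree argument to show $y_P \in f(P)$, and read off the desired faces $\sigma_i$ from the support of the partition of unity at a preimage of $y_P$.

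First I would set up the partition of unity. For each face $\sigma$ of $P$, let $\mathrm{st}^\circ(\sigma)$ denote the \emph{open star}, that is, the union of the relative interiors of all faces of $P$ that contain $\sigma$; this set is open in $P$, and $x \in \mathrm{st}^\circ(\sigma)$ exactly when $\sigma$ is a face of the minimal face $\rho_{\min}(x)$ of $P$ through $x$. Fix $\varepsilon > 0$ and set
\[
\mu_\sigma(x) \;=\; \max\!\bigl(0,\,\varepsilon - \dist(x, A_\sigma)\bigr)\cdot\dist\!\bigl(x,\,P\setminus \mathrm{st}^\circ(\sigma)\bigr),
\qquad
\alpha_\sigma(x) \;=\; \frac{\mu_\sigma(x)}{\sum_\tau \mu_\tau(x)}.
\]
Applying the covering hypothesis to $\rho_{\min}(x)$ produces some $\tau \subseteq \rho_{\min}(x)$ with $x \in A_\tau$, which forces $\mu_\tau(x) > 0$, so the $\alpha_\sigma$ form a well-defined continuous partition of unity on $P$. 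By construction, $\alpha_\sigma(x) > 0$ implies both $\dist(x, A_\sigma) < \varepsilon$ and $\sigma \subseteq \rho_{\min}(x)$.

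Next I would define $f\colon P \to P$ by $f(x) = \sum_\sigma \alpha_\sigma(x)\, y_\sigma$, which lands in $P$ because every $y_\sigma \in \sigma \subseteq P$ and $P$ is convex. For any face $\rho$ of $P$ and any $x \in \rho$, each contributing $y_\sigma$ satisfies $y_\sigma \in \sigma \subseteq \rho_{\min}(x) \subseteq \rho$, so $f(\rho) \subseteq \rho$. The straight-line homotopy $H_t(x) = (1-t)x + tf(x)$ then sends each face into itself, so $f$ is homotopic to $\id_P$ as a self-map of the pair $(P, \partial P)$, hence has degree one on $(P, \partial P)$ and is surjective onto $\mathrm{int}(P)$. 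If $y_P$ lies in $\mathrm{int}(P)$, pick some $x_\varepsilon$ with $f(x_\varepsilon) = y_P$; if $y_P$ lies on a proper face $\rho^\ast$, restrict to the face-preserving map $f|_{\rho^\ast}$ (only $\alpha_\sigma$ with $\sigma \subseteq \rho^\ast$ are active on $\rho^\ast$) and iterate the degree argument on the lower-dimensional polytope $\rho^\ast$. The faces $\sigma$ with $\alpha_\sigma(x_\varepsilon) > 0$ then express $y_P$ as a convex combination of the corresponding $y_\sigma$ and place $x_\varepsilon$ within distance $\varepsilon$ of each such $A_\sigma$. Letting $\varepsilon \to 0$ and passing to a subsequence along which the (finite) set of active faces is constant and $x_\varepsilon \to x_0$ yields $x_0 \in \bigcap_i A_{\sigma_i}$ and $y_P \in \conv\{y_{\sigma_1}, \dots, y_{\sigma_k}\}$ in the limit.

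The main obstacle is producing a partition of unity that is simultaneously subordinate to the closed cover $\{A_\sigma\}$ and face-preserving. A naive partition of unity merely subordinate to the cover can have $\alpha_\tau(x) > 0$ with $\tau \not\subseteq \rho_{\min}(x)$, in which case $f$ need not preserve faces and the degree-one argument on $(P, \partial P)$ breaks. The factor $\dist(x,\,P\setminus \mathrm{st}^\circ(\sigma))$ suppresses exactly these transversal contributions, while the covering hypothesis applied face by face guarantees that the denominator $\sum_\tau \mu_\tau$ remains strictly positive everywhere on $P$.
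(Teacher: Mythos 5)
Your proof is correct, and it is worth noting how it sits relative to the paper: the paper does not prove Theorem~\ref{thm:komiya} at all --- it cites Komiya, remarks that the statement (and its colorful version) would follow from Theorem~\ref{thm:fan-gen} (resp.\ Theorem~\ref{thm:fan-gen-col}), and explicitly omits that derivation because it is no simpler than the short direct proofs in the literature. Your argument is exactly such a direct proof, in the spirit of Komiya's original one: the essential device is the extra factor $\dist(x,\,P\setminus \mathrm{st}^\circ(\sigma))$, which forces every active $\sigma$ at $x$ to satisfy $\sigma \subseteq \rho_{\min}(x)$, so that $f(x)=\sum_\sigma \alpha_\sigma(x)y_\sigma$ preserves every face; the straight-line homotopy through face-preserving maps then gives degree one on $(P,\partial P)$, hence surjectivity onto the interior, and the $\varepsilon\to 0$ compactness argument with closedness of the $A_\sigma$ finishes. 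The covering hypothesis applied to $\rho_{\min}(x)$ is used correctly to keep the denominator positive, and the boundary case is handled correctly by restricting to a face containing $y_P$ (simplest: take $\rho^\ast$ to be the minimal face containing $y_P$, so that $y_P$ lies in its relative interior and no iteration is needed). The only real difference from the classical treatment is cosmetic --- you use a degree-of-pairs argument where Komiya uses Brouwer's fixed point theorem; these are equivalent in strength. By contrast, the paper's (omitted) route would deduce this Brouwer/PPAD-level statement from the equivariant Theorem~\ref{thm:fan-gen}, which lives at the (believed strictly stronger) Borsuk--Ulam/PPA level; your proof is the more elementary and self-contained option. Two trivial points to tidy: note that $P\setminus\mathrm{st}^\circ(\sigma)\neq\emptyset$ for $d\ge 1$ (or fix a convention for $\dist(x,\emptyset)$), and record explicitly that in the limit the active face set is constant along the subsequence, so $y_P\in\conv\{y_{\sigma_1},\dots,y_{\sigma_k}\}$ holds verbatim, not just in a limit.
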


Theorem~\ref{thm:komiya} and its colorful extension follow from Theorems~\ref{thm:fan-gen} and~\ref{thm:fan-gen-col}, respectively. We omit this derivation as it is not any simpler than the short proofs that are already found in the literature. It is tempting to think that conversely perhaps Theorem~\ref{thm:komiya} easily implies Theorem~\ref{thm:fan-gen}. This is unlikely: Brouwer's fixed point theorem, or more precisely the search problem of finding an approximate fixed point, and its relatives belong to the complexity class PPAD, whereas the corresponding search problem for the Borsuk--Ulam theorem is in PPA.  It is believed that these are distinct complexity classes with the Borsuk--Ulam theorem and its relatives being strictly stronger statements whose search problems are outside of PPAD. Theorem~\ref{thm:fan-gen} is to Komiya's theorem (and Theorem~\ref{thm:fan-gen-col} to the colorful Komiya theorem) as the Borsuk--Ulam theorem is to Brouwer's fixed point theorem.

We record the following simple consequence of Theorem~\ref{thm:fan-gen} that can be seen as a $\Z/2$-equivariant strengthening of the KKM theorem:

\begin{cor}
\label{cor:fan-gen}
    Let $A_1, \dots, A_{d+1} \subset S^d$ be closed sets such that $A_i \cap (-A_i) =\emptyset$ for all $i \in [d+1]$ and $\bigcup_i A_i \cup \bigcup_i (-A_i) = S^d$. Then $\bigcap_i A_i \ne \emptyset$.
\end{cor}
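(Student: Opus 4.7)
The plan is to encode the covering data as an odd continuous map $\widetilde\alpha\colon S^d \to O$, where $O = \{y \in \R^{d+1} : \sum_i |y_i| = 1\}$ is the boundary of the $(d+1)$-dimensional crosspolytope, and exploit that $O$ is $\Z/2$-equivariantly homeomorphic to $S^d$. In this model the ``positive facet'' $\{y \in O : y_i \ge 0 \text{ for all } i\}$ corresponds precisely to $\bigcap_i A_i$: a preimage of any interior point of this facet witnesses a point in every $A_i$.

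To build $\widetilde\alpha$ I reuse the partition-of-unity construction from the proof of Theorem~\ref{thm:fan-gen-cont}. For any small $\varepsilon > 0$, let $V_i$ be the open $\varepsilon$-neighborhood of $A_i$ in $S^d$, with $\varepsilon$ small enough that $V_i \cap (-V_i) = \emptyset$. The projections $q(V_i) \subset \R P^d$ then form an open cover of $\R P^d$; let $\alpha_1, \dots, \alpha_{d+1}$ be a subordinate partition of unity. Define $\widetilde\alpha_i(x) = +\alpha_i(q(x))$ for $x \in V_i$, $-\alpha_i(q(x))$ for $x \in -V_i$, and $0$ otherwise. Since the sign is flipped only inside $V_i \cup (-V_i)$, where $\alpha_i \circ q$ is supported, this is continuous; it is manifestly odd and satisfies $\sum_i |\widetilde\alpha_i(x)| = 1$, so $\widetilde\alpha = (\widetilde\alpha_1, \dots, \widetilde\alpha_{d+1}) \colon S^d \to O$ is a continuous $\Z/2$-equivariant map between two copies of $S^d$.

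By the Borsuk--Ulam theorem, every continuous $\Z/2$-equivariant self-map of $S^d$ has odd Brouwer degree and is therefore surjective. Hence the barycenter $(\tfrac{1}{d+1},\dots,\tfrac{1}{d+1})$ of the positive facet lies in the image: some $x_\varepsilon \in S^d$ satisfies $\widetilde\alpha_i(x_\varepsilon) > 0$, which forces $x_\varepsilon \in V_i$, for every $i$. Thus $\dist(x_\varepsilon, A_i) < \varepsilon$ for every $i$, and letting $\varepsilon \to 0$ along a sequence and using compactness of $S^d$ extracts an accumulation point $x^\ast \in \bigcap_i A_i$.

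The main obstacle is the well-definedness and continuity of the sign-twisted $\widetilde\alpha$: we assign opposite signs to points in $V_i$ and $-V_i$ carrying the same value of $\alpha_i \circ q$, so we genuinely need $V_i \cap (-V_i) = \emptyset$ and $\mathrm{supp}(\alpha_i \circ q) \subset V_i \cup (-V_i)$ to make the sign convention unambiguous. Both conditions are arranged by enlarging $A_i$ to $V_i$ and using a partition of unity subordinate to the open cover $\{q(V_i)\}$; the price is the concluding compactness argument as $\varepsilon \to 0$.
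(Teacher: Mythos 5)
Your argument is correct, but it takes a genuinely different route from the paper. The paper proves this corollary in two lines as a special case of Theorem~\ref{thm:fan-gen}: set $B_i = A_i$ for $i \in [d]$ and $B_{d+1} = -A_{d+1}$, and choose $X \subset \R^{d-1}$ with $x_{d+1}$ in the interior of the simplex spanned by $x_1,\dots,x_d$, so that the only Radon pairs separate $x_{d+1}$ from the remaining points; the resulting intersection statement unwinds exactly to $\bigcap_i A_i \ne \emptyset$. You instead give a self-contained proof that bypasses Theorem~\ref{thm:fan-gen} entirely: the partition-of-unity construction (borrowed from the proof of Theorem~\ref{thm:fan-gen-cont}) packages the covering into an odd map $S^d \to (\Delta_d)^{*2}_\Delta \cong S^d$, and the odd-degree form of Borsuk--Ulam (the same fact the paper invokes in the proof of Theorem~\ref{thm:fan-gen-label}) gives surjectivity, so the barycenter of the all-positive facet has a preimage; your $\varepsilon$-thickening plus compactness then places the limit point in $\bigcap_i A_i$. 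All the delicate points you flag (well-definedness and continuity of the sign-twisted $\widetilde\alpha$, oddness, $\sum_i|\widetilde\alpha_i|=1$) check out. One simplification: taking the partition of unity subordinate to the closed cover $q(A_1),\dots,q(A_{d+1})$ exactly as in the paper's proof of Theorem~\ref{thm:fan-gen-cont} makes $\widetilde\alpha_i(x)>0$ force $x \in A_i$ directly, so the $\varepsilon \to 0$ step disappears. Comparing the two: the paper's reduction is shorter given the machinery already established and illustrates how Radon configurations encode sign patterns, while your surjectivity argument is independent of Theorems~\ref{thm:fan-gen} and~\ref{thm:fan-gen-cont} and yields for free the stronger statement in the remark following the corollary: since the map hits the interior of every facet of the crosspolytope, $\bigcap_{i\in S} A_i \cap \bigcap_{i\in T}(-A_i) \ne \emptyset$ for every partition $S \sqcup T$ of $[d+1]$.
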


\begin{proof}
    Let $B_i = A_i$ for $i \in [d]$, and let $B_{d+1} = -A_{d+1}$. Use Theorem~\ref{thm:fan-gen} on these sets and $X = \{x_1,\dots,x_{d+1}\} \subset \R^{d-1}$ constructed by placing $x_{d+1}$ in the interior of the simplex spanned by $x_1, \dots, x_d$. Thus 
    \[
    \emptyset \ne \bigcap_{i\in [d]} B_i \cap (-B_{d+1}) = \bigcap_{i \in [d+1]} A_i.
    \]
\end{proof}

\begin{rem}
    Corollary~\ref{cor:fan-gen} implies that in Theorem~\ref{thm:fan-gen} for $m=d+1$ any sign pattern can be prescribed for the intersection. Precisely: Let $A_1, \dots, A_{d+1} \subset S^d$ be closed sets such that $A_i \cap (-A_i) =\emptyset$ for all $i \in [d+1]$ and $\bigcup_i A_i \cup \bigcup_i (-A_i) = S^d$. Then for all partitions $S \sqcup T$ of~$[d+1]$ into two (not necessarily non-empty) sets we have that \[\bigcap_{i \in S} A_i \cap \bigcap_{i \in T} (-A_i) \ne \emptyset.\]
\end{rem}

In fact, Komiya's theorem and its colorful generalization were extended further to ``sparse'' and to matroidal version; see~\cite{ mcginnis2024matroid, mcginnis2024sparse, soberon2022fair}. It would be interesting to derive similar extension of Theorem~\ref{thm:fan-gen} and Theorem~\ref{thm:fan-gen-col}.

\section{Consequences and context: Combinatorics}
\label{sec:consequences-comb}

\subsection{Kneser colorings, rainbow faces, and the topological Hall theorem}
\label{subsec:kneser}
Since Lov\'asz's proof of Kneser's conjecture~\cite{Lov78} the Borsuk--Ulam theorem has been used to establish lower bounds for chromatic numbers of graphs; see~\cite{daneshpajouh2025box, matousek2004topological} for surveys. The \emph{chromatic number}~$\chi(G)$ of a graph~$G$ is the smallest number of colors required to color the vertices of~$G$ such that no edge has endpoints of the same color. All graphs we are considering will be finite and without loops. Topological lower bounds for~$\chi(G)$ are in terms of the topology of simplicial complexes associated to~$G$ that we explain now.

Let $G$ be a graph on vertex set~$V$ and let $N(G)$ be its \emph{neighborhood complex}, that is, the simplicial complex of $\sigma \subset V$ that are contained in a common neighborhood: There is a $v \in V$ such that $(v,w)$ is an edge for all~$w\in \sigma$. 
The \emph{box complex}~$B(G)$ of~$G$ is a simplicial complex on $V \times \{-1,+1\}$, where $(\sigma \times \{-1\}) \cup (\tau \times \{+1\})$ is a face of~$B(G)$ if $\sigma$ and $\tau$ are faces of $N(G)$ with the property that for all $v \in \sigma$ and all $w\in \tau$ there is an edge~$(v,w)$, that is, the complete bipartite graph between $\sigma$ and $\tau$ is a subgraph of~$G$. The complexes $N(G)$ and $B(G)$ are homotopy equivalent~\cite{matousek2004topological}. Lov\'asz showed that $\chi(G) \ge \mathrm{conn}(N(G))+3$, where $\mathrm{conn}\, X$ denotes the homotopical connectivity of~$X$. A reformulation, and slight strengthening, of this states that if there is a $\Z/2$-equivariant map $S^d \to B(G)$ then $\chi(G) \ge d+2$.

\begin{rem}
\label{rem:B0}
    There are various different but closely related versions of box complexes in the literature; see~\cite{daneshpajouh2025box, matousek2004topological}. We highlight one other variant: The complex $B_0(G)$ is obtained from $B(G)$ by adding $V \times \{+1\}$ and $V \times \{-1\}$ as faces (and all their respective subsets). Up to $\Z/2$-equivariant homotopy $B_0(G)$ is the suspension of~$B(G)$; see Csorba~\cite{csorba2007homotopy} and \v Zivaljevi\'c~\cite{vzivaljevic2005wi}. In particular, if there is a $\Z/2$-equivariant map $S^d \to B(G)$, then by suspending there is a $\Z/2$-equivariant map $S^{d+1} \to B_0(G)$.
\end{rem}

For graphs~$G$, where the Borsuk--Ulam theorem provides lower bounds for~$\chi(G)$, Fan's theorem and more generally Theorem~\ref{thm:fan-gen} will give insight into the structure of proper colorings, in addition to a bound on the number of colors that are required. For such applications of Fan's theorem see~\cite{fan1982evenly, hajiabolhassan2011generalization, SimonyiTardos2006, simonyi2007colorful}. 
We first need additional definitions before we can state two of these results. For a finite set system~$\mathcal F$, let $\mathrm{KG}(\mathcal F)$ denote its \emph{Kneser graph}, that is, the graph on vertex set~$\mathcal F$ with an edge $(A,B)$ whenever $A,B \in \mathcal F$ are disjoint. The \emph{colorability defect}~$\mathrm{cd}(\mathcal F)$ of a set system on~$[n]$ is the quantity
\[
    \mathrm{cd}(\mathcal F) = n - \max\{|A\cup B|\, |\, A,B\subset [n], \, \text{for all} \ F\in \mathcal F: F\not\subset A \ \text{and} \ F\not\subset B\}.
\]
In words, $\mathrm{cd}(\mathcal F)$ quantifies the size of the largest subset~$X$ of~$[n]$ such that the induced subhypergraph of~$\mathcal F$ on~$X$ admits a proper $2$-coloring. Dol'nikov~\cite{dol1988certain} showed that $\chi(\mathrm{KG}(\mathcal F)) \ge \mathrm{cd}(\mathcal F)$; this is because if $d \le \mathrm{cd}(\mathcal F)$ then there is a $\Z/2$-equivariant map $S^{d-2} \to B(\mathrm{KG}(\mathcal F))$.

\begin{thm}[Simonyi and Tardos~\cite{simonyi2007colorful}]
\label{thm:zigzag}
    Let $m\ge 1$ be an integer, and let $\mathcal F$ be a set system of non-empty sets with $\mathrm{cd}(\mathcal F) = c$. Then for any proper coloring of~$\mathrm{KG}(\mathcal F)$ with $m$ colors there is a complete bipartite subgraph $K_{\lceil c/2 \rceil, \lfloor c/2 \rfloor}$ such that $c$ different colors occur alternating with respect to their natural order on the two sides of the bipartite graph.
\end{thm}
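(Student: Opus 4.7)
The plan is to apply the labeled Radon lemma, Lemma~\ref{lem:labeled-facet}, to the box complex $B_0(\mathrm{KG}(\mathcal F))$ with vertex labels obtained from the coloring via the moment curve. The starting input is Dol'nikov's construction, which produces a $\Z/2$-equivariant map $S^{c-2} \to B(\mathrm{KG}(\mathcal F))$ whenever $c \le \mathrm{cd}(\mathcal F)$; by Remark~\ref{rem:B0}, one obtains a $\Z/2$-equivariant map $S^{c-1} \to B_0(\mathrm{KG}(\mathcal F))$ by suspending. The extra dimension gained from this suspension is what ultimately lets us extract $c$ alternating colors rather than $c-1$.

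Fix $m$ points $x_1, \dots, x_m$ on the moment curve $\gamma(t) = (t,t^2,\dots,t^{c-2})$ in $\R^{c-2}$, indexed in the natural order of colors. Given a proper coloring $\kappa\colon \mathcal F \to [m]$ of $\mathrm{KG}(\mathcal F)$, I would define $f\colon \mathcal F \times \{-1,+1\} \to \R^{c-2}$ by $f(F,\pm 1) = x_{\kappa(F)}$, which satisfies the hypothesis $f(F,-1) = f(F,+1)$ of Lemma~\ref{lem:labeled-facet}. Applying the lemma with $\Lambda = B_0(\mathrm{KG}(\mathcal F))$ and $d = c-1$ (so $\R^{d-1} = \R^{c-2}$) produces a face $\sigma$ whose ``plus'' and ``minus'' parts project to subsets $S, T \subset \mathcal F$ satisfying $\conv f(S \times \{+1\}) \cap \conv f(T \times \{-1\}) \ne \emptyset$.

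Since $\conv \emptyset = \emptyset$, non-emptiness of the intersection forces both $S$ and $T$ to be non-empty, so $\sigma$ is actually a face of $B(\mathrm{KG}(\mathcal F))$ itself: every $F \in S$ and $F' \in T$ are disjoint, so $S \cup T$ spans a complete bipartite subgraph $K_{|S|,|T|}$ in $\mathrm{KG}(\mathcal F)$. Since $\kappa$ is proper, the color sets $\kappa(S)$ and $\kappa(T)$ are disjoint subsets of $[m]$. The intersecting convex hulls provide a Radon pair $(\kappa(S), \kappa(T))$ of moment-curve points in $\R^{c-2}$; Gale's evenness criterion forces the colors in $\kappa(S) \cup \kappa(T)$ to alternate in natural order, and general position of the moment curve forces $|\kappa(S)| + |\kappa(T)| \ge c$. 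Restricting to any $c$ consecutive alternating colors in $\kappa(S)\cup\kappa(T)$ and choosing one representative vertex from $S$ or $T$ for each color produces the desired $K_{\lceil c/2 \rceil, \lfloor c/2 \rfloor}$ with $c$ colors appearing alternately.

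The main obstacle is the dimensional bookkeeping: without passing from $B(\mathrm{KG}(\mathcal F))$ to $B_0(\mathrm{KG}(\mathcal F))$ via suspension, Lemma~\ref{lem:labeled-facet} would supply only $c-1$ alternating colors, one short of the Simonyi--Tardos conclusion. A secondary, essentially routine, point is verifying that the Radon pair can always be trimmed to exactly $c$ consecutive alternating colors with the split $\lceil c/2 \rceil$, $\lfloor c/2 \rfloor$ between the two sides, which follows directly from Gale's evenness criterion applied to the moment curve.
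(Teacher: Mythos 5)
Your proof is correct and takes essentially the same approach as the paper's: the paper derives this result by applying Corollary~\ref{cor:rainbow}(ii) (which itself is Lemma~\ref{lem:labeled-facet} specialized to box complexes) with moment-curve points in $\R^{c-2}$ and the suspension step of Remark~\ref{rem:B0}, exactly as you do. The only cosmetic difference is that you invoke Lemma~\ref{lem:labeled-facet} directly on $B_0(\mathrm{KG}(\mathcal F))$ rather than routing through Corollary~\ref{cor:rainbow}(ii); note also that the alternating structure is guaranteed only after passing to a minimal Radon sub-pair, as you acknowledge in your final paragraph.
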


In particular, this implies Dol'nikov's theorem that $m\ge c$. Simonyi and Tardos~\cite{simonyi2007colorful} further show that if there is a $\Z/2$-equivariant map $S^d \to B(\mathrm{KG}(\mathcal F))$ then for any proper coloring of $\mathrm{KG}(\mathcal F)$ with $d+1$ colors and any partition $A\sqcup B$ of $[d+1]$ into non-empty parts, there is a complete bipartite subgraph $K_{|A|,|B|}$ of~$\mathrm{KG}(\mathcal F)$ such that the colors in~$A$ appear on one side, and the colors in~$B$ on the other side. This generalizes a result of Csorba, Lange, Schurr, and Wassmer~\cite{csorba2004box} that such complete bipartite subgraphs (without any reference to colorings) must exist.

For a proper coloring~$c$ of a graph~$G$ with an arbitrary number of colors, let $\ell(c)$ denote the largest number of colors that appear in the neighborhood of a vertex~$v$ of~$G$. The \emph{local chromatic number}~$\psi(G)$ of a graph~$G$ is the minimum of $\ell(c)+1$ taken over all proper colorings~$c$ of~$G$. Another related result is the following:

\begin{thm}[Simonyi and Tardos~\cite{SimonyiTardos2006}]
\label{thm:local-chr}
    Let $G$ be a graph such that there is a $\Z/2$-equivariant map $S^d \to B_0(G)$. Then $\psi(G) \ge \lceil \frac{d+1}{2} \rceil +1$.
\end{thm}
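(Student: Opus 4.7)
The plan is a direct reduction to Fan's theorem (Theorem~\ref{thm:fan}), translating a proper coloring of $G$ into an antipodal sphere covering via the $\Z/2$-equivariant map to $B_0(G)$. First I would fix a proper coloring $c\colon V(G) \to [m]$ realizing $\psi(G) = \ell(c)+1$ and an equivariant map $F\colon S^d \to B_0(G)$. For each color $i \in [m]$ set
\[
A_i = \bigl\{x\in S^d \,\big|\, F(x) \in \overline{\mathrm{st}}(v,+1) \text{ for some } v \in V(G) \text{ with } c(v)=i \bigr\},
\]
a closed subset of $S^d$. Equivariance of $F$ combined with the sign-swapping action on $B_0(G)$ identifies $-A_i$ with the analogous set built from stars of the vertices $(v,-1)$. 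Every $F(x)$ lies in some face of $B_0(G)$, which contains at least one vertex, so $\bigcup_i A_i \cup \bigcup_i (-A_i) = S^d$.

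The subtle point, and where I expect to have to be careful, is $A_i \cap (-A_i) = \emptyset$: if some face of $B_0(G)$ contained both $(v,+1)$ and $(w,-1)$ with $c(v)=c(w)=i$, then the bipartite-face structure of $B_0(G)$ would force an edge $vw$ in $G$. Properness of $c$ forces $v = w$, but the pair $(v,+1),(v,-1)$ only co-occurs in a face of $B_0(G)$ if $G$ has a loop at $v$, which it does not. With this verified, Theorem~\ref{thm:fan} yields indices $i_1 < i_2 < \cdots < i_{d+1}$ and a point $x \in \bigcap_{j=1}^{d+1}(-1)^j A_{i_j}$. Unpacking the definitions, $F(x)$ lies in a single face $\Phi$ of $B_0(G)$ that, for each $j$, contains a vertex $(v_j,\varepsilon_j)$ with $c(v_j) = i_j$ and $\varepsilon_j = (-1)^{j+1}$, so the signs alternate.

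Assuming $d \ge 1$, both signs occur, so $\Phi$ is of the bipartite type $(\sigma \times \{+1\}) \cup (\tau \times \{-1\})$ with $\sigma,\tau$ inducing a complete bipartite subgraph of $G$. The $+1$-side carries $\lfloor \tfrac{d+1}{2}\rfloor$ of the pairwise distinct colors $i_j$ (those with $j$ even) and the $-1$-side carries the remaining $\lceil \tfrac{d+1}{2}\rceil$ colors (those with $j$ odd). Picking any $v \in \sigma$, the complete bipartite edges force $\tau \subset N(v)$, so $\lceil \tfrac{d+1}{2}\rceil$ distinct colors, none equal to $c(v)$, appear in $N(v)$. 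This gives $\ell(c) \ge \lceil \tfrac{d+1}{2}\rceil$ and hence $\psi(G) \ge \lceil \tfrac{d+1}{2}\rceil + 1$, as claimed. The hard part is the disjointness check $A_i \cap (-A_i) = \emptyset$, which crucially uses both properness of $c$ and looplessness of $G$; the rest is combinatorial bookkeeping about which colors lie on which side of the bipartite face that Fan's theorem delivers.
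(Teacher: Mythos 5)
Your overall strategy --- pull the coloring back through the equivariant map $F\colon S^d \to B_0(G)$ to produce an antipodal closed cover of $S^d$, then apply Fan's theorem and read off a bipartite face of $B_0(G)$ --- is a reasonable and more direct route than the paper's, which instead passes through its general rainbow-face result (Corollary~\ref{cor:rainbow}(ii), proved via the labeling Lemma~\ref{lem:labeled-facet}) specialized to points in cyclic position in $\R^{d-1}$. However, your construction of the sets $A_i$ via \emph{closed} stars does not satisfy $A_i \cap (-A_i) = \emptyset$, and the disjointness check is where the argument breaks.

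The flaw: $x \in A_i \cap (-A_i)$ means $F(x) \in \overline{\mathrm{st}}(v,+1)$ and, by equivariance, $F(x) \in \overline{\mathrm{st}}(w,-1)$ for some $v,w$ with $c(v)=c(w)=i$. This only says the two closed stars meet; it does \emph{not} say $(v,+1)$ and $(w,-1)$ lie in a common face of $B_0(G)$, which is what your argument actually rules out. Two closed stars intersect whenever their centers are at distance $\le 2$ in the $1$-skeleton, a far weaker condition than adjacency. In $B_0(G)$ this is fatal: $V\times\{-1\}$ is a face, so $\overline{\mathrm{st}}(w,-1) \supset V\times\{-1\}$; meanwhile $\overline{\mathrm{st}}(v,+1)$ contains $(a,-1)$ for every neighbor $a$ of $v$. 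Hence $\overline{\mathrm{st}}(v,+1) \cap \overline{\mathrm{st}}(w,-1) \ne \emptyset$ whenever $v$ has \emph{any} neighbor, so $A_i \cap (-A_i) \ne \emptyset$ unless every vertex of color $i$ is isolated, and Fan's theorem cannot be invoked. (Concretely: for $G$ the path $u$--$a$--$w$ with $c(u)=c(w)$, a map $F$ hitting $(a,-1)$ already produces a point of $A_{c(u)} \cap (-A_{c(u)})$.)

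The idea is salvageable by replacing closed stars with \emph{open} stars: $F(x) \in \mathrm{st}(v,+1) \cap \mathrm{st}(w,-1)$ forces the carrier face of $F(x)$ to contain both $(v,+1)$ and $(w,-1)$, so they do lie in a common face and your adjacency/properness argument applies. The open sets still cover $S^d$ (every carrier face has a vertex), and one then invokes the open-cover form of Fan's theorem or shrinks to a closed cover. Alternatively, and closer to what the paper does, triangulate $S^d$ antipodally, take an equivariant simplicial approximation of $F$, and read the labeling off at the vertex level; that is the content of Lemma~\ref{lem:labeled-facet} and entirely sidesteps the star-cover subtlety. Once the disjointness is repaired, the rest of your bookkeeping --- extracting the bipartite face, placing $\lceil\tfrac{d+1}{2}\rceil$ distinct colors on one side, and concluding $\ell(c) \ge \lceil\tfrac{d+1}{2}\rceil$ --- is correct.
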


Later, Simonyi, Tardos, and Vre\'cica~\cite{simonyi2009local} proved a related result that is often stronger; see Theorem~\ref{thm:local-chr2}.

Theorems~\ref{thm:zigzag} and~\ref{thm:local-chr} give structural insight into proper colorings of graphs whose chromatic number is governed by the Borsuk--Ulam theorem. Theorem~\ref{thm:fan-gen} shows that such structure is governed by intersection patterns of convex hulls in Euclidean space. Indeed, Simonyi and Tardos show that Theorem~\ref{thm:zigzag} is a consequence of Fan's theorem, that is, Theorem~\ref{thm:fan-gen} with points in cyclic position. Our results exhibit that Theorem~\ref{thm:local-chr} is a consequence of van Kampen's theorem that for $d+2$ points in~$\R^{d-1}$ there is a partition of the points into two sets $A$ and $B$ of (almost) equal size such that $\conv A \cap \conv B \ne \emptyset$. More generally, we will show that for graphs $G$ where a $\Z/2$-equivariant map $S^d \to B(G)$ exists, the structure of proper colorings is governed by Radon-type intersection results.

In extending such structural results for proper colorings of graphs now using Theorem~\ref{thm:fan-gen} instead of Theorem~\ref{thm:fan}, we derive a result that asserts the existence of colorful substructures in sufficient generality to also encompass other results about rainbow substructures, most notably the topological Hall theorem, which we state now (see~{\cite[Thm.~2.1]{aharoni2022fractionally}}):

\begin{thm}[Aharoni, Haxell~\cite{aharoni2000hall}, Meshulam~\cite{meshulam2003domination}]
\label{thm:top-hall}
    Let $\Sigma$ be a simplicial complex with vertex labeling $f\colon V \to [d]$ such that for every $A \subset [d]$ the induced subcomplex of~$\Sigma$ on vertex set~$f^{-1}(A)$ is $(|A|-2)$-connected. Then there is a face~$\sigma$ of~$\Sigma$ such that $f(\sigma) = [d]$.
\end{thm}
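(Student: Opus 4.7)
The plan is to deduce Theorem~\ref{thm:top-hall} from Lemma~\ref{lem:labeled-facet} after associating to the pair $(\Sigma,f)$ an auxiliary $\Z/2$-complex $\Lambda$. Put $\Lambda$ on vertex set $V\times\{+1,-1\}$ with the $\Z/2$-action swapping signs; declare $\alpha\subset V\times\{+1,-1\}$ to be a face of $\Lambda$ whenever its two halves $\alpha_+=\alpha\cap(V\times\{+1\})$ and $\alpha_-=\alpha\cap(V\times\{-1\})$ project to disjoint subsets $\alpha_+^V,\alpha_-^V\subset V$, and the union $\alpha_+^V\cup\alpha_-^V$ is a face of $\Sigma$. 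In parallel, place $y_1,\dots,y_d\in\R^{d-2}$ on the moment curve so that, by Gale's evenness criterion as invoked at the start of Section~\ref{sec:consequences-top}, the only non-trivial Radon partitions $(S,T)$ of $\{y_1,\dots,y_d\}$ are alternating and exhaust $[d]$, i.e.\ satisfy $S\cup T=[d]$. Finally, define the labeling $g\colon V\times\{+1,-1\}\to\R^{d-2}$ by $g(v,\pm 1)=y_{f(v)}$; since $g$ is constant in the sign it is an admissible labeling for Lemma~\ref{lem:labeled-facet}.

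Granted the key claim that $\Lambda$ admits a $\Z/2$-equivariant map $S^{d-1}\to\Lambda$, Lemma~\ref{lem:labeled-facet} (applied with $d-1$ in place of $d$) produces a face $\alpha=\alpha_+\sqcup\alpha_-$ of $\Lambda$ with $\conv g(\alpha_+)\cap\conv g(\alpha_-)\neq\emptyset$. By the choice of the $y_i$, the pair $(f(\alpha_+^V),f(\alpha_-^V))$ is a Radon partition of $\{y_1,\dots,y_d\}$ that exhausts all $d$ points, whence $f(\alpha_+^V)\cup f(\alpha_-^V)=[d]$. Since $\alpha^V=\alpha_+^V\cup\alpha_-^V$ lies in $\Sigma$ by construction of $\Lambda$, we conclude $f(\alpha^V)=[d]$, giving the desired face. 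The low-dimensional cases are elementary and should be disposed of separately: $d=1$ is immediate from $f^{-1}(\{1\})\neq\emptyset$, while $d=2$ follows from path-connectedness of $\Sigma$ together with non-emptiness of each color class by tracing a path from a color-$1$ vertex to a color-$2$ vertex and picking the last edge that crosses colors.

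The main obstacle is the key claim $\mathrm{ind}_{\Z/2}(\Lambda)\ge d-1$, and this is where the full strength of the connectivity hypotheses is used. My plan here is induction on $d$: cover $\Lambda$ by the $\Z/2$-subcomplexes $\Lambda^{(i)}$ arising from $\Sigma[f^{-1}([d]\setminus\{i\})]$ for $i\in[d]$. The inductive hypothesis applies to each $\Lambda^{(i)}$ (yielding a $\Z/2$-equivariant map from $S^{d-2}$), while for $A\subset[d]$ the intersection $\bigcap_{i\in A}\Lambda^{(i)}$ arises from $\Sigma[f^{-1}([d]\setminus A)]$, whose $(d-|A|-2)$-connectivity is a hypothesis of the theorem. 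The nerve of the cover $\{\Lambda^{(i)}\}_{i\in[d]}$ is exactly $\partial\Delta_{d-1}\simeq S^{d-2}$, and assembling the equivariant pieces along this nerve produces the extension from $S^{d-2}$ up to $S^{d-1}$ into $\Lambda$. This mirrors the classical nerve-theorem proof of Aharoni--Haxell and Meshulam but is carried out $\Z/2$-equivariantly, which is the ``new equivariant-topological'' proof advertised in the introduction, and also makes the argument modular enough to plug directly into Lemma~\ref{lem:labeled-facet} and its generalizations to yield the broader rainbow-face results of Corollary~\ref{cor:rainbow}.
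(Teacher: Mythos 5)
Your high-level plan is the same as the paper's: build a free $\Z/2$-complex $\Lambda$ out of $(\Sigma,f)$, prove a $\Z/2$-equivariant map $S^{d-1}\to\Lambda$ exists from the connectivity hypothesis, and then feed this into Lemma~\ref{lem:labeled-facet} with a suitable point configuration. (The paper's proof is literally: combine Lemma~\ref{lem:conn-eq} with Corollary~\ref{cor:rainbow}(iii).) However, there are two places where your proposal does not go through as written.

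First, your $\Lambda$ imposes only \emph{vertex}-disjointness of $\alpha_+^V$ and $\alpha_-^V$, not \emph{label}-disjointness $f(\alpha_+^V)\cap f(\alpha_-^V)=\emptyset$. Lemma~\ref{lem:labeled-facet} produces some face $\alpha$ with $\conv g(\alpha_+)\cap\conv g(\alpha_-)\neq\emptyset$, but if a common label appears on both sides (say $f(v)=f(w)$ with $(v,+1),(w,-1)\in\alpha$), that intersection is trivially nonempty, and the pair $(f(\alpha_+^V),f(\alpha_-^V))$ is not a Radon pair at all, so the ``exhausts $[d]$'' conclusion is unjustified. The fix is to build label-disjointness into the definition of $\Lambda$, exactly as the paper's $\Lambda(\Sigma,f)$ does. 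With that fix, your additional requirement that $\alpha_+^V\cup\alpha_-^V$ be a single face of $\Sigma$ is actually a pleasant idea: combined with $d$ generic points in $\R^{d-2}$, whose only Radon pairs use all $d$ points, it lets you read off a single rainbow face directly and avoids the coning trick the paper performs in Corollary~\ref{cor:rainbow}(iii) (where it passes through $\Lambda_0$, adds an extra vertex with a new color $d+1$, and places $d+1$ points in $\R^{d-1}$ so that one side of the unique Radon pair is forced to be $\{d+1\}$). So this piece, once corrected, is a genuine and modest streamlining.

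Second, the nerve argument for the key claim does not produce what you need. The subcomplexes $\Lambda^{(i)}$ arising from $\Sigma[f^{-1}([d]\setminus\{i\})]$ do \emph{not} cover $\Lambda$ precisely when the theorem's conclusion holds: a rainbow face lies in no $\Lambda^{(i)}$. Even ignoring that, Meshulam/Bj\"orner-type nerve lemmas, given that $\bigcap_{i\in J}\Lambda^{(i)}$ is $(d-|J|-2)$-connected and the nerve is $\partial\Delta_{d-1}\simeq S^{d-2}$, only yield that $\bigcup_i\Lambda^{(i)}$ is $(d-3)$-connected, which (via $\Z/2$-obstruction theory on a free space) gives an equivariant map from $S^{d-2}$, not $S^{d-1}$ — exactly one dimension short. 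Your phrase ``assembling the equivariant pieces along this nerve produces the extension from $S^{d-2}$ up to $S^{d-1}$'' is where the actual work lies, and it is not specified. The paper's Lemma~\ref{lem:conn-eq} avoids both difficulties with a direct cell-by-cell obstruction argument: identify $S^{d-1}$ with the crosspolytope boundary $(\Delta_{d-1})^{*2}_\Delta$, match each signed face $\widetilde A$ with $\Sigma[f^{-1}(A)]$, and extend over cells of increasing dimension using $(|A|-2)$-connectivity, producing the desired equivariant map on the nose. You should replace the nerve sketch with this skeleton-by-skeleton extension; once you do, and once $\Lambda$ is made label-disjoint, the rest of your argument closes cleanly, including the easy cases $d\le 2$.
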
 

We explain the relation to Hall's matching theorem and more generally Hall-type matching results in hypergraphs in the next subsection. For now we aim to show that the aforementioned results are consequences of Theorem~\ref{thm:fan-gen}. In particular, there are generalizations of Theorem~\ref{thm:top-hall} that give structural insights into vertex labelings with more than $d+1$ colors in analogy with Theorem~\ref{thm:zigzag}.

Let $\Sigma$ be a simplicial complex on vertex set~$V$. Let $f\colon V \to [m]$ be a labeling of the vertices. The \emph{labeling complex}~$\Lambda(\Sigma, f)$ is the simplicial complex on $V \times \{-1,+1\}$ with faces of the form $(\sigma \times \{-1\}) \cup (\tau \times \{+1\})$, where $\sigma$ and $\tau$ are faces of~$\Sigma$ such that $f(\sigma) \cap f(\tau) = \emptyset$. The complex~$\Lambda(\Sigma, f)$ has a free $\Z/2$-action that swaps $(v,-1)$ and~$(v,+1)$. A face $\sigma$ of~$\Sigma$ is \emph{colorful} if $f(\sigma) = [m]$. If moreover $\sigma$ has exactly $m$ elements, so that every label appears exactly once on~$\sigma$, we call $\sigma$ \emph{rainbow}. Any colorful face has a rainbow subface.

\begin{ex}
\label{ex:box}
    Let $G$ be a graph with neighborhood complex~$N(G)$, and let $f\colon V \to [c]$ be a proper coloring of~$G$. A proper coloring cannot use all colors on a neighborhood of a vertex, and so $N(G)$ does not have rainbow faces. Let $(\sigma \times \{-1\}) \cup (\tau \times \{+1\})$ be a face of~$B(G)$. Since $f$ is a proper coloring and $G$ contains the complete bipartite graph on $\sigma$ and~$\tau$, we have that $f(\sigma) \cap f(\tau) = \emptyset$. Thus $(\sigma \times \{-1\}) \cup (\tau \times \{+1\})$ is a face of $\Lambda(N(G), f)$, and so $B(G) \subset \Lambda(N(G), f)$. The complex $\Lambda(N(G), f)$ can be properly larger than the box complex~$B(G)$: Two disjoint subsets of vertices $\sigma, \tau \subset V$, each contained in a common neighborhood, that have disjoint sets of colors will induce a face in $\Lambda(N(G), f)$, independent of which edges between $\sigma$ and $\tau$ are present in~$G$.
\end{ex}

This example and Remark~\ref{rem:B0} motivate the following definition: For a simplicial complex~$\Sigma$ on~$V$ and labeling $f\colon V \to [m]$, let $\Lambda_0(\Sigma,f)$ be the simplicial complex obtained from~$\Lambda(\Sigma, f)$ by adding all subsets of $V \times \{+1\}$ and all subsets of $V \times \{-1\}$ as faces. By Example~\ref{ex:box}, for $G$ a graph and $f$ a proper coloring of~$G$ we have that $B_0(G) \subset \Lambda_0(N(G),f)$.
The following corollary exhibits ``colorful substructures'' and generalizes the results mentioned above:

\begin{cor}
\label{cor:rainbow}
     Let $m$ be a positive integer, and let $X = \{x_1, \dots, x_m\} \subset \R^{d-1}$ be a set of $m$ points. 
     \begin{compactenum}[(i)]
        \item Let $\Sigma$ be a simplicial complex with vertex labeling $f\colon V \to [m]$ such that there is a $\Z/2$-equivariant map $S^d \to \Lambda_0(\Sigma,f)$. Then there are disjoint subsets $S, T \subset [m]$ and disjoint faces~$\sigma$ and~$\tau$ of~$\Sigma$ such that \[\conv\{x_i\, |\, i \in S\} \cap \conv\{x_i\, |\, i \in T\} \ne \emptyset, \ f(\sigma) = S, \ \text{and} \ f(\tau) = T.\]
        \item Let $G$ be a graph on vertex set~$V$ such that there is a $\Z/2$-equivariant map $S^{d} \to B_0(G)$. Then for $f\colon V\to [m]$ there are disjoint subsets $S, T \subset [m]$ and disjoint sets $\sigma, \tau \subset V$ with $(u,w)$ is an edge of~$G$ for every $u \in \sigma$ and every $w\in \tau$ such that \[\conv\{x_i\, |\, i \in S\} \cap \conv\{x_i\, |\, i \in T\} \ne \emptyset, \ f(\sigma) = S, \ \text{and} \ f(\tau) = T.\]
    
        \item Let $\Sigma$ be a simplicial complex with vertex labeling $f\colon V \to [d]$ such that there is a $\Z/2$-equivariant map $S^{d-1} \to \Lambda_0(\Sigma,f)$. Then there is a face~$\sigma$ of~$\Sigma$ such that $f(\sigma) = [d]$. 
    \end{compactenum}
\end{cor}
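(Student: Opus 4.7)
The plan is to derive all three parts of the corollary by applying Lemma~\ref{lem:labeled-facet} to well-chosen $\Z/2$-invariant simplicial complexes on $V \times \{-1,+1\}$, always with the vertex labeling $h\colon V \times \{-1,+1\} \to \R^{d-1}$ defined by $h(v, \pm 1) = x_{f(v)}$, which manifestly satisfies $h(v,-1) = h(v,+1)$.

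For part~(i), I will take $\Lambda = \Lambda_0(\Sigma, f)$. The hypothesized $\Z/2$-equivariant map $S^d \to \Lambda_0(\Sigma, f)$ feeds directly into Lemma~\ref{lem:labeled-facet}, producing a face $\omega$ of $\Lambda_0(\Sigma, f)$ whose two halves have $h$-image convex hulls that intersect. Since $\conv \emptyset = \emptyset$, the face $\omega$ must meet both $V \times \{+1\}$ and $V \times \{-1\}$. The faces added to $\Lambda(\Sigma, f)$ in passing to $\Lambda_0(\Sigma, f)$ lie entirely on one side, so $\omega$ is already a face of $\Lambda(\Sigma, f)$; writing $\omega = (\sigma \times \{+1\}) \cup (\tau \times \{-1\})$ yields faces $\sigma, \tau$ of $\Sigma$ with $f(\sigma) \cap f(\tau) = \emptyset$. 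Setting $S = f(\sigma)$ and $T = f(\tau)$ gives the conclusion, with $\sigma \cap \tau = \emptyset$ in $V$ following from disjointness of labels.

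For part~(ii), the same strategy applies to $\Lambda = B_0(G)$. A face of $B_0(G)$ meeting both halves is in $B(G)$, so the complete bipartite graph between the two sides $\sigma$ and $\tau$ is contained in $G$, and $\sigma \cap \tau = \emptyset$ in $V$ because $G$ has no loops. The required condition $f(\sigma) \cap f(\tau) = \emptyset$ is immediate when $f$ is a proper coloring; more generally one can route the argument through part~(i) with $\Sigma = N(G)$, using the containment $B_0(G) \subset \Lambda_0(N(G), f)$ noted in Example~\ref{ex:box} to transport the $\Z/2$-equivariant map into $\Lambda_0(N(G),f)$.

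For part~(iii), I will apply part~(i) with $m = d$ and $X = \{x_1, \dots, x_d\} \subset \R^{d-2}$ placed in affine general position, so that the unique affine dependency among the $x_i$ uses all of them with nonzero coefficients. Consequently $X$ admits a unique Radon partition, and that partition necessarily covers $[d]$. Part~(i) then produces disjoint faces $\sigma, \tau$ of $\Sigma$ with $f(\sigma) \sqcup f(\tau) = [d]$. The main obstacle will be to combine $\sigma$ and $\tau$ into a single face $\omega$ of $\Sigma$ with $f(\omega) = [d]$, since $\sigma \cup \tau$ is not automatically a face; here I would exploit the richer structure of the equivariant map to $\Lambda_0(\Sigma, f)$ beyond a single Radon pair, for example by iterating over different general-position configurations $X$ or by invoking the continuous refinement Theorem~\ref{thm:fan-gen-cont}, so that the combined face structure can actually be realized within $\Sigma$ itself.
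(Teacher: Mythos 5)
Your parts (i) and (ii) are correct and follow exactly the paper's route: apply Lemma~\ref{lem:labeled-facet} to $\Lambda_0(\Sigma,f)$ (resp.\ $B_0(G)$) with the labeling $h(v,\pm1)=x_{f(v)}$, observe that $\conv\emptyset=\emptyset$ forces the resulting face to meet both halves, and unwind the definitions.

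Part~(iii), however, has a genuine gap, and you identify it yourself: applying part~(i) with $m=d$ points in general position in $\R^{d-2}$ gives two disjoint faces $\sigma,\tau$ whose labels together exhaust $[d]$, but the corollary requires a \emph{single} face $\omega$ with $f(\omega)=[d]$, and $\sigma\cup\tau$ need not be a face. Your proposed fixes (iterating over different configurations $X$, or invoking Theorem~\ref{thm:fan-gen-cont}) do not close this gap: every configuration $X$ and every application of the continuous version still produce a \emph{pair} of faces, never a single face, so the same obstacle recurs. The paper's resolution is to change the complex rather than the configuration: cone $\Sigma$ with a fresh vertex $v_0$ carrying the new label $d+1$, so that $\Lambda_0(\widehat\Sigma,\widehat f)$ becomes the suspension of $\Lambda_0(\Sigma,f)$ and the hypothesized $\Z/2$-map $S^{d-1}\to\Lambda_0(\Sigma,f)$ suspends to $S^d\to\Lambda_0(\widehat\Sigma,\widehat f)$. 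Then one applies part~(i) with $m=d+1$ and $X\subset\R^{d-1}$ chosen so that the \emph{only} Radon partition is $(\{1,\dots,d\},\{d+1\})$, e.g.\ an affinely independent $d$-set together with its barycenter. This forces $\widehat f(\sigma)=[d]$ and $\widehat f(\tau)=\{d+1\}$; since $v_0$ is the sole vertex labeled $d+1$, the face $\sigma$ avoids $v_0$ and hence already lies in $\Sigma$. That coning step is the key idea missing from your outline.

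A secondary remark: in part~(ii) you, like the paper, lean on the containment $B_0(G)\subset\Lambda_0(N(G),f)$, which Example~\ref{ex:box} establishes only for $f$ a proper coloring; for arbitrary $f$ the cleanest route is to apply Lemma~\ref{lem:labeled-facet} directly to $B_0(G)$, though one should then address why $S=f(\sigma)$ and $T=f(\tau)$ can be taken disjoint.
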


\begin{proof}
\begin{compactenum}[(i)]
    \item Let $\widehat f\colon V \times \{-1,+1\} \to \R^{d-1}$ be defined by $\widehat f(v, \pm 1) = x_{f(v)}$. Apply Lemma~\ref{lem:labeled-facet} to the simplicial complex~$\Lambda_0(\Sigma, f)$ and the map~$\widehat f$. Then there is a face $(\sigma \times \{+1\}) \cup (\tau \times \{-1\})$ of~$\Lambda_0(\Sigma, f)$ with $\conv \widehat f(\sigma \times \{+1\}) \cap \conv \widehat f(\tau \times \{-1\}) \ne \emptyset$. In particular, since both $\sigma$ and $\tau$ are non-empty, they are disjoint faces of~$\Sigma$ by the definition of~$\Lambda_0(\Sigma, f)$. For $S = f(\sigma)$ and $T = f(\tau)$ we have that $\conv\{x_i \,|\, i \in S\} \cap \conv\{x_i\,|\, i \in T\} \ne \emptyset$.
    \item Apply the reasoning above to $B_0(G) \subset \Lambda_0(N(G), f)$ with the difference that now $(\sigma \times \{+1\} \cup (\tau \times \{-1\})$ will be a face of~$B_0(G)$. Then $\sigma$ and $\tau$ induce a complete bipartite subgraph of~$G$ by definition of~$B_0(G)$.
    \item Let $v_0$ be a new vertex. Let $\widehat V = V \cup \{v_0\}$, and let $\widehat f \colon \widehat V \to [d+1]$ extend~$f$ by setting $\widehat f(v_0) = d+1$. Let $\widehat \Sigma$ be the cone over $\Sigma$, that is, $\sigma \cup \{v_0\}$ is a face of~$\widehat \Sigma$ for every face $\sigma$ of~$\Sigma$. Then $\Lambda_0(\widehat \Sigma, \widehat f)$ is the suspension of~$\Lambda_0(\Sigma, f)$ and thus there is a $\Z/2$-equivariant map $S^d \to \Lambda_0(\widehat \Sigma, \widehat f)$. Let $X = \{x_1, \dots, x_{d+1}\} \subset \R^{d-1}$ by a set with only Radon partition $(\{x_1, \dots, x_d\}, \{x_{d+1}\})$, such as an affinely independent set and its barycenter. By part~(i) there are disjoint faces $\sigma$ and $\tau$ of $\widehat \Sigma$ with $\widehat f(\sigma) = [d]$ and $\widehat f(\tau) = \{d+1\}$; in particular, $f(\sigma) = [d]$ and $\sigma$ is a face of~$\Sigma$.
\end{compactenum}
\end{proof}

Corollary~\ref{cor:rainbow} is a common generalization of the topological Hall theorem (Theorem~\ref{thm:top-hall}) and topological lower bounds for the chromatic number. Indeed, let $G$ be a graph such that there is a $\Z/2$-equivariant map $S^d \to B_0(G)$, and suppose $f\colon V \to [d]$ were a proper coloring of~$G$. Since $B_0(G) \subset \Lambda_0(N(G),f)$, Corollary~\ref{cor:rainbow} asserts that there is a neighborhood $\sigma$ in~$G$ with $f(\sigma) = [d]$, but then $f$ could not have been a proper coloring and thus $\chi(G) \ge d+1$. We will explain how to derive the topological Hall theorem below. First we remark that Corollary~\ref{cor:rainbow}(ii) generalizes Theorems~\ref{thm:zigzag} and~\ref{thm:local-chr}.

\begin{rem}
    If $\mathrm{cd}(\mathcal F) = c$ then there is a $\Z/2$-equivariant $S^{c-1} \to B_0(\mathrm{KG}(\mathcal F))$. Let $f\colon \mathcal F \to [m]$ be a proper $m$-coloring of~$\mathrm{KG}(\mathcal F)$. Place $x_1, \dots, x_m \in \R^{c-2}$ in cyclic position. Then Corollary~\ref{cor:rainbow}(ii) specializes to Theorem~\ref{thm:zigzag}, since points in cyclic position in~$\R^{c-2}$ have minimal Radon partitions $(A,B)$ of size~$c$, such that between any two points of~$A$ there is a point of~$B$ and vice versa. 
    The larger part, say~$A$, has size $\lceil \frac{c}{2} \rceil$, which for $d=c-1$ is $\lceil \frac{d+1}{2} \rceil$. Thus if there is a $\Z/2$-equivariant map $S^d \to B_0(G)$ then Corollary~\ref{cor:rainbow} implies that $\psi(G) \ge \lceil\frac{d+1}{2}\rceil+1$, recovering Theorem~\ref{thm:local-chr}. See also Theorem~\ref{thm:local-chr2}, which gives stronger bounds in certain cases.
\end{rem}

While Corollary~\ref{cor:rainbow} is optimal, in the sense that the same result for $X \subset \R^d$ is no longer true (for example, because it would imply a lower bound of~$d+2$ for the chromatic number), consequences for rainbow bipartite subgraphs and the local chromatic number can sometimes be improved by one. This is essentially due to the phenomenon of Corollary~\ref{cor:localLS}, that in any finite closed covering of $S^d$ with sets that do not contain antipodes, some $\lceil \frac{d+3}{2} \rceil$ sets intersect. We concretely state the following result of Simonyi, Tardos, and Vre\'cica that sometimes improves on the bound on local chromatic number of Theorem~\ref{thm:local-chr} by one:

\begin{thm}[Simonyi, Tardos, and Vre\'cica~\cite{simonyi2009local}]
\label{thm:local-chr2}
    Let $G$ be a graph such that there is a $\Z/2$-equivariant map $S^d \to B(G)$. Then $\psi(G) \ge \lfloor \frac{d}{2} \rfloor +3$.
\end{thm}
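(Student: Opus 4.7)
The plan is to apply a local Lusternik--Schnirelman argument of the type of Corollary~\ref{cor:localLS} to an antipodal closed cover of $S^d$ derived from $\Phi\colon S^d\to B(G)$ and an arbitrary proper coloring $c\colon V\to[m]$ of $G$.

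After $\Z/2$-equivariant simplicial approximation of $\Phi$, for each color $i\in[m]$ and sign $\epsilon\in\{+1,-1\}$ I would set
\[
    A_i^\epsilon \;=\; \Phi^{-1}\Big(\bigcup_{v\in c^{-1}(i)}\overline{\mathrm{St}}(v,\epsilon)\Big)\;\subset\;S^d,
\]
where $\overline{\mathrm{St}}(v,\epsilon)$ is the closed star of $(v,\epsilon)$ in $|B(G)|$. These are closed sets with $A_i^-=-A_i^+$ that jointly antipodally cover $S^d$. The crucial property $A_i^+\cap A_i^-=\emptyset$ comes from the structure of $B(G)$: a common point would force some face of $B(G)$ to contain both $(v,+1)$ and $(w,-1)$ with $c(v)=c(w)=i$, hence an edge between $v$ and $w$ in $G$ by the bipartite-completeness condition for faces of $B(G)$, contradicting that $c$ is a proper coloring (the case $v=w$ is excluded since $G$ is loopless). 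This disjointness is exactly where the assumption $\Phi\colon S^d\to B(G)$ (rather than $S^d\to B_0(G)$) enters.

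The next step is to adapt the two-equator geodesic cone construction from the proof of Corollary~\ref{cor:localLS}: embed $S^d\hookrightarrow S^{d+1}\hookrightarrow S^{d+2}$ successively and extend $\{A_i^\epsilon\}$ to an antipodal cover of $S^{d+2}$ by coning to the two new pairs of poles. Applying Corollary~\ref{cor:fan-col} on $S^{d+2}$ produces an alternating-sign intersection of $d+3$ sets, whose sign alternation forces the common point back to the original $S^d$ and, after flipping all signs if necessary, yields distinct indices $i_1,\dots,i_{\lceil(d+3)/2\rceil}\in[m]$ and a common point $x\in\bigcap_k A_{i_k}^{+}$. Then $\Phi(x)$ lies in a face of $B(G)$ containing $(v_{i_k},+1)$ for vertices $v_{i_k}\in c^{-1}(i_k)$, so the $v_{i_k}$ lie in a common face of $N(G)$ and share a common neighbor $w\in V(G)$. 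Since the colors $c(v_{i_k})$ are pairwise distinct by construction, $w$'s neighborhood witnesses $\ell(c)\ge\lceil(d+3)/2\rceil$, giving $\psi(G)\ge\lceil(d+3)/2\rceil+1=\lfloor d/2\rfloor+3$.

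The main obstacle I expect is the faithful transfer of the two-equator construction to the present setting, in which only the antipodal pairs (and not the sets $A_i^{+}$ alone) jointly cover $S^d$, whereas the original hypothesis of Corollary~\ref{cor:localLS} is that each covering covers $S^d$ on its own. One must verify that the geodesic-cone extensions on $S^{d+2}$ still satisfy the antipodal-disjointness hypothesis of Corollary~\ref{cor:fan-col} and that the sign-forcing conclusion propagates correctly back through both equators to the original $S^d$. This technical bookkeeping is precisely the mechanism supplying the extra unit over the $\psi(G)\ge\lceil(d+1)/2\rceil+1$ bound that a direct application of Theorem~\ref{thm:fan-gen} would give, mirroring the improvement of Corollary~\ref{cor:localLS} over Corollary~\ref{cor:localLS-signs}.
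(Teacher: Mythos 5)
There is a genuine gap, and it sits exactly at the step you defer as ``technical bookkeeping.'' (Note also that the paper does not prove this theorem at all --- it quotes it from Simonyi--Tardos--Vre\'cica and only remarks that their proof relies on the non-colorful version of Corollary~\ref{cor:localLS} --- so your proposal has to stand on its own.) Your sets $A_i^{+}$, $A_i^{-}=-A_i^{+}$ cover $S^d$ only jointly with their antipodes: a point whose image under $\Phi$ lies in a purely negative face of $B(G)$ need not lie in any $A_i^{+}$. So the hypothesis of Corollary~\ref{cor:localLS} (each family covers $S^d$ by itself) is not available, and this cannot be repaired: the statement your plan needs --- that an antipodal cover of $S^d$ by closed antipode-free sets $A_1,\dots,A_m$ forces some $\lceil\tfrac{d+3}{2}\rceil$ of them to intersect with a common sign --- is false. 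Already for $d=1$ take $A_1,A_2,A_3\subset S^1$ to be the arcs $[0^\circ,60^\circ]$, $[120^\circ,180^\circ]$, $[240^\circ,300^\circ]$: each $A_i\cap(-A_i)=\emptyset$, the six arcs $\pm A_i$ cover $S^1$, yet no two distinct indices have a same-sign common point, while $\lceil\tfrac{d+3}{2}\rceil=2$. Consistently with this, the coning construction breaks at the first step: the geodesic cone of $\bigcup_i A_i^{+}$ to the north pole need not contain the northern hemisphere, so you do not obtain an antipodal cover of $S^{d+1}$ (let alone $S^{d+2}$) meeting the hypotheses of Corollary~\ref{cor:fan-col}. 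What your cover does yield, via Theorem~\ref{thm:fan-gen} (or Lemma~\ref{lem:labeled-facet}) with points in cyclic position, is an alternating intersection of $d+1$ sets whose larger sign class has $\lceil\tfrac{d+1}{2}\rceil$ colors on one side of a face of $B(G)$, giving $\psi(G)\ge\lceil\tfrac{d+1}{2}\rceil+1$ --- exactly one short of $\lfloor d/2\rfloor+3$ in both parities, i.e.\ you recover a Theorem~\ref{thm:local-chr}-type bound, not Theorem~\ref{thm:local-chr2}. The missing idea in the actual proof is to apply the local Lusternik--Schnirelman/antipodal-coincidence phenomenon not to the two-sided color cover but to a genuine single cover (equivalently, to a map from $S^d$ into a complex of dimension $\ell(c)-1$ recording the colors of the dominant side of $\Phi(x)$); that is where the structure of $B(G)$ enters beyond what your sets $A_i^{\pm}$ remember.

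A secondary, fixable error: with closed stars taken in $B(G)$ itself, $\overline{\mathrm{St}}(v,+1)\cap\overline{\mathrm{St}}(w,-1)\ne\emptyset$ does not force a face of $B(G)$ containing both $(v,+1)$ and $(w,-1)$ (a common vertex $u$ forming a face with each separately suffices), so your verification of $A_i^{+}\cap A_i^{-}=\emptyset$ is not valid as written, and the same issue undermines the final step where you place all the vertices $(v_{i_k},+1)$ in one face of $N(G)$ to extract a common neighbor. Taking closed stars in the barycentric subdivision of $B(G)$, or arguing with supports as in the paper's proofs of Theorem~\ref{thm:fan-gen} and Lemma~\ref{lem:labeled-facet}, repairs both points --- but it does not touch the main gap above.
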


The existence of $\Z/2$-equivariant map $S^d \to B(G)$ implies that there is a $\Z/2$-equivariant map $S^{d+1} \to B_0(G)$, but the converse does not hold; see~\cite{simonyi2009local}. By Theorem~\ref{thm:local-chr}, this yields the bound $\psi(G) \ge \lceil \frac{d+2}{2} \rceil +1$, which depending on the parity of $d$ either agrees with the bound of Theorem~\ref{thm:local-chr2} or is worse by one.
The proof in~\cite{simonyi2009local} relies on the non-colorful version of Corollary~\ref{cor:localLS}.

We will now explain how to derive the topological Hall theorem, Theorem~\ref{thm:top-hall}, from the results above.

For a simplicial complex~$\Sigma$ with vertex labeling $f\colon V \to [d+1]$, we can think of $\Lambda(\Sigma,f)$ as labeled with $\{\pm 1, \dots, \pm (d+1)\}$: The vertex labeling $\widetilde f\colon V \times \{-1,+1\} \to \{\pm 1, \dots, \pm (d+1)\}$ is defined by $\widetilde f(v,\pm 1) = \pm f(v)$. Denote the induced subcomplex on a subset of vertices $W \subset V$ by~$\Sigma[W]$. Let $\widetilde A \subset \{\pm 1, \dots, \pm (d+1)\}$ be a set that whenever $j \in \widetilde A$ then $-j \notin \widetilde A$. Let $A = \{|a|\, |\, a\in \widetilde A\}$ be the set $\widetilde A$ with all signs changed to positive. It is easily verified that $\Sigma[f^{-1}(A)]$ is isomorphic to~$\Lambda(\Sigma,f)[\widetilde f^{-1}(\widetilde A)]$. 

\begin{lem}
\label{lem:conn-eq}
     Let $\Sigma$ be a simplicial complex with vertex labeling $f\colon V \to [d+1]$ such that for every $A \subset [d+1]$ the induced subcomplex of~$\Sigma$ on vertex set~$f^{-1}(A)$ is $(|A|-2)$-connected. Then there is a $\Z/2$-equivariant map $S^d \to \Lambda(\Sigma,f)$.
\end{lem}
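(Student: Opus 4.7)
The plan is an equivariant-obstruction construction of the desired map using the natural simplicial map out of $\Lambda(\Sigma, f)$. Let $\Gamma$ be the simplicial complex on vertex set $\{\pm 1, \dots, \pm(d+1)\}$ whose faces are the subsets containing no antipodal pair $\{j,-j\}$; geometrically $|\Gamma| = (\Delta_d)^{*2}_\Delta \cong S^d$, with antipodal $\Z/2$-action swapping $j$ and $-j$. The rule $\widetilde f(v,\epsilon) = \epsilon \cdot f(v)$ defines a $\Z/2$-equivariant simplicial map $\widetilde f \colon \Lambda(\Sigma, f) \to \Gamma$. For each face $\widetilde A$ of $\Gamma$ write $\Sigma_{\widetilde A}$ for the induced subcomplex $\Lambda(\Sigma, f)[\widetilde f^{-1}(\widetilde A)]$; by the identification stated just before the lemma, $\Sigma_{\widetilde A} \cong \Sigma[f^{-1}(A)]$, hence is $(|\widetilde A|-2)$-connected by hypothesis. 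Equivariance of $\widetilde f$ moreover gives $-\Sigma_{\widetilde A} = \Sigma_{-\widetilde A}$.

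I will build a $\Z/2$-equivariant continuous map $\phi \colon \Gamma \to \Lambda(\Sigma, f)$ inductively over the skeleta of $\Gamma$, maintaining the invariant $\phi(\widetilde B) \subset \Sigma_{\widetilde B}$ for every face $\widetilde B$ of $\Gamma$. For the base case, the hypothesis with $|A|=1$ forces $f^{-1}(\{i\}) \ne \emptyset$; pick any $v_i \in f^{-1}(\{i\})$ and set $\phi(\pm i) = (v_i, \pm 1)$, which is equivariant. For the inductive step, suppose $\phi$ is defined $\Z/2$-equivariantly on the $(k-1)$-skeleton, $1 \le k \le d$, with the invariant intact. The $k$-faces of $\Gamma$ come in antipodal pairs $\{\widetilde A, -\widetilde A\}$, and these are disjoint as vertex sets (since $\widetilde A$ contains no antipodal pair). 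For each pair pick a representative $\widetilde A$; by the invariant on its $(k-1)$-subfaces, $\phi(\partial \widetilde A) \subset \Sigma_{\widetilde A}$. Since $\Sigma_{\widetilde A}$ is $(|\widetilde A|-2) = (k-1)$-connected and $\partial \widetilde A \cong S^{k-1}$, the map $\phi|_{\partial \widetilde A}$ extends over $\widetilde A \cong D^k$ into $\Sigma_{\widetilde A}$. Finally, set $\phi|_{-\widetilde A} := -\phi|_{\widetilde A}$; this lands in $-\Sigma_{\widetilde A} = \Sigma_{-\widetilde A}$, preserving the invariant. Iterating up to $k=d$ produces the required $\Z/2$-equivariant map $S^d \to \Lambda(\Sigma, f)$.

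The main obstacle is maintaining equivariance through each inductive extension, and it is handled cleanly by the disjointness of antipodal $k$-faces of $\Gamma$: every $\Z/2$-orbit of $k$-cells consists of two genuinely disjoint closed cells, so a representative of each orbit can be extended freely and its antipode defined by the group action, with no compatibility condition on shared lower-dimensional faces. Otherwise the argument is the standard equivariant-obstruction-theoretic one, in which the connectivity of each fiber $\Sigma_{\widetilde A}$ precisely matches the dimension of the cell over which it must be filled in.
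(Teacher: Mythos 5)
Your argument is correct and follows essentially the same skeleton-by-skeleton equivariant construction as the paper's proof: identify $S^d$ with $(\Delta_d)^{*2}_\Delta$, maintain the invariant that each face $\widetilde A$ maps into $\Lambda(\Sigma,f)[\widetilde f^{-1}(\widetilde A)] \cong \Sigma[f^{-1}(A)]$, extend over each $k$-face using $(k-1)$-connectivity, and use the vertex-disjointness of $\widetilde A$ and $-\widetilde A$ to define the map on $-\widetilde A$ by equivariance. You spell out the base case ($(-1)$-connected means non-empty) and the consistency check for equivariance more explicitly than the paper, but the approach is identical.
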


\begin{proof}
    Identify the vertex set of~$(\Delta_d)^{*2}_\Delta$ with $\{\pm 1, \dots, \pm (d+1)\}$ in the natural way. Construct a $\Z/2$-equivariant map $h\colon (\Delta_d)^{*2}_\Delta \to \Lambda(\Sigma,f)$ skeleton-by-skeleton with the additional property that $h(A) \subset \Lambda(\Sigma,f)[\widetilde f^{-1}(\widetilde A)]$ for any face $\widetilde A$ of $(\Delta_d)^{*2}_\Delta$ (with notation as above). Suppose that $h$ has been defined on all faces of dimension at most~$k-1$, and let $\widetilde\sigma$ be a $k$-dimensional face of~$(\Delta_d)^{*2}_\Delta$. Since $\widetilde\sigma$ has $k+1$ vertices, $\Lambda(\Sigma,f)[\widetilde f^{-1}(\widetilde\sigma)] \cong \Sigma[f^{-1}(\sigma)]$ is $(k-1)$-connected, where $\sigma$, as before, contains the elements of~$\widetilde\sigma$ with all signs flipped to positive. Since $\partial\sigma$ is a $(k-1)$-sphere there is no obstruction to extending $h$ to $\widetilde\sigma$, and by symmetry to $-\widetilde\sigma$. Extending for every antipodal pair of $k$-faces, defines $h$ on all $k$-faces. By induction obtain a $\Z/2$-equivariant map $h\colon S^d \cong (\Delta_d)^{*2}_\Delta \to \Lambda(\Sigma,f)$.
\end{proof}

\begin{proof}[Proof of Theorem~\ref{thm:top-hall}]
Combine Lemma~\ref{lem:conn-eq} with Corollary~\ref{cor:rainbow}(iii).
\end{proof}

Since through Lemma~\ref{lem:conn-eq} Corollary~\ref{cor:rainbow}(iii) immediately implies the topological Hall theorem, Corollary~\ref{cor:rainbow}(i) is a ``Simonyi--Tardos''-style strengthening of the topological Hall theorem. Corollary~\ref{cor:rainbow}(i) gives structural information about pairs of faces that together exhibit $d+1$ colors. Related quantitative versions of the topological Hall theorem are due Meunier and Montejano~\cite{meunier2020different}.

\subsection{Hall-type results for hypergraphs}
Recall that Hall's matching theorem gives a necessary and sufficient condition for the existence of a matching in a bipartite graph that fully covers all vertices in one of the parts. A \emph{matching} is a set of edges that pairwise do not share any vertices. If $G$ is a bipartite graph on vertex set $X\sqcup Y$ such that for every edge $e \in E$ one endpoint is in~$X$ and one endpoint is in~$Y$, then there is a matching $M \subset E$ incident to every vertex in~$X$ if and only if $|N(A)| \ge |A|$ for all $A \subset X$. 

Let $\mathcal F$ be a family of sets, and let $f\colon \mathcal F \to [m]$ be a labeling of the sets. A \emph{system of disjoint representatives} (or \emph{transversal matching}) is a choice of $A_i \in f^{-1}(i)$ for every $i \in [m]$ such that the $A_i$ are pairwise disjoint. By $M(\mathcal F)$ we will denote the \emph{matching complex} of~$\mathcal F$, that is, the simplicial complex on vertex set~$\mathcal F$, where $\{B_1, \dots, B_\ell\}$ is a face if the $B_i$ are pairwise disjoint. Thus a rainbow face of~$M(\mathcal F)$ with respect to the labeling~$f$ is a system of disjoint representatives. Equivalently, we may think of this as $m$ families of sets $\mathcal F_1, \dots, \mathcal F_m$, where a system of disjoint representatives consists of pairwise disjoint $A_1 \in \mathcal F_1, \dots, A_m \in \mathcal F_m$.

For $G$ a bipartite graph on $X \sqcup Y$ let $\mathcal F_x$ be the set of neighbors of $x \in X$. A system of disjoint representatives $\{\{y_x\} \, |\, x\in X\}$ corresponds to a matching $\{(x,y_x)\, |\, x\in X\}$ of~$G$ that uses every vertex in~$X$. The disjoint representatives are singletons in this case. The topological Hall theorem for $M(\bigcup_x \mathcal F_x)$ implies Hall's matching theorem since the join of $k$ complexes of dimension~$0$ is $(k-2)$-connected. Similarly, the topological Hall theorem implies existence results for systems of disjoint representatives that have more than one element and can be viewed as hypergraph Hall theorems. We mention one instance that can be phrased in purely combinatorial terms.

Let $\mathcal F$ be a family of sets. The \emph{width}~$w(\mathcal F)$ is the minimal~$t$ for which there are $A_1, \dots, A_t \in \mathcal F$ such that for all $A \in \mathcal F$ there is an $i \in [t]$ with $A \cap A_i \ne \emptyset$. In particular, for any $k < w(\mathcal F)$ and for all $A_1, \dots, A_k \in \mathcal F$ there is an $A \in \mathcal F$ that is disjoint from all~$A_i$.

\begin{thm}[Aharoni and Haxell~\cite{aharoni2000hall}]
\label{thm:hall-width}
    Let $\mathcal F_1, \dots, \mathcal F_d$ be families of sets with $w(\bigcup_{i\in I} \mathcal F_i) \ge 2|I|-1$ for all $I \subset [d]$. Then there is a system of disjoint representatives $A_1 \in \mathcal F_1, \dots, A_d\in \mathcal F_d$.
\end{thm}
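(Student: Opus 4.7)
The plan is to recast the existence of a system of disjoint representatives as the existence of a rainbow face in a matching complex and invoke the topological Hall theorem (Theorem~\ref{thm:top-hall}).

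First I would form the disjoint union $\mathcal F = \mathcal F_1 \sqcup \dots \sqcup \mathcal F_d$ (so that a set appearing in several families contributes several distinct vertices) and define the labeling $f \colon \mathcal F \to [d]$ by $f|_{\mathcal F_i} \equiv i$. Setting $\Sigma = M(\mathcal F)$, a rainbow face of $\Sigma$ under $f$ is precisely a choice of pairwise disjoint $A_1 \in \mathcal F_1, \dots, A_d \in \mathcal F_d$, i.e., the desired system of disjoint representatives. For every $I \subset [d]$, the induced subcomplex of $\Sigma$ on $f^{-1}(I)$ is exactly $M\bigl(\bigcup_{i \in I} \mathcal F_i\bigr)$. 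Therefore, by Theorem~\ref{thm:top-hall}, the theorem reduces to verifying that $M\bigl(\bigcup_{i \in I} \mathcal F_i\bigr)$ is $(|I|-2)$-connected for every $I \subset [d]$.

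This in turn reduces to a standalone connectivity lemma (essentially the key technical ingredient of Aharoni--Haxell): \emph{if $\mathcal G$ is a family of sets with $w(\mathcal G) \ge 2k-1$, then $M(\mathcal G)$ is $(k-2)$-connected.} Applying it with $\mathcal G = \bigcup_{i \in I} \mathcal F_i$ and $k = |I|$ matches the hypothesis $w\bigl(\bigcup_{i \in I}\mathcal F_i\bigr) \ge 2|I| - 1$ exactly, completing the reduction.

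The main obstacle is the connectivity lemma itself, which I would prove by induction on $k$. The base case $k=1$ is immediate since $w(\mathcal G) \ge 1$ forces $\mathcal G \ne \emptyset$ so that $M(\mathcal G)$ is nonempty. For the inductive step, given a continuous map $g \colon S^{k-2} \to M(\mathcal G)$, approximate simplicially from some triangulation $K$ of $S^{k-2}$ and extend simplex by simplex over a triangulated $(k-1)$-ball. The engine of each extension step is the combinatorial consequence of the width bound: for any $2k-2$ sets $A_1, \dots, A_{2k-2} \in \mathcal G$ there is some $A \in \mathcal G$ disjoint from each $A_j$, since otherwise the $A_j$ would witness $w(\mathcal G) \le 2k-2$. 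The factor of~$2$ in $2k-1$ reflects that when filling in a new simplex, the representatives to be avoided come both from the sphere image on the boundary and from the interior vertices already added during the induction; in both cases the relevant subfamily, consisting of those $A \in \mathcal G$ disjoint from a bounded collection of previously-chosen representatives, still has width large enough for the inductive hypothesis to produce the required extension.
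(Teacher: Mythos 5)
Your reduction is exactly the one the paper intends: form the matching complex of the (disjointly) labeled family $\mathcal F_1 \sqcup \dots \sqcup \mathcal F_d$, observe that a rainbow face is a system of disjoint representatives, identify the induced subcomplex on $f^{-1}(I)$ as a matching complex, and feed the connectivity lemma into Theorem~\ref{thm:top-hall}. The paper's own ``proof'' of Theorem~\ref{thm:hall-width} is precisely this: it states the unnumbered connectivity lemma with a citation to~\cite{aharoni2000hall} and says it combines with the topological Hall theorem, and your first two paragraphs spell that combination out correctly. One harmless imprecision: the induced subcomplex on $f^{-1}(I)$ is $M\bigl(\bigsqcup_{i\in I}\mathcal F_i\bigr)$ rather than $M\bigl(\bigcup_{i\in I}\mathcal F_i\bigr)$, but this does not matter because the flag-complex form of the lemma only needs every $2|I|-2$ vertices to have a common neighbor, and that follows from $w\bigl(\bigcup_{i\in I}\mathcal F_i\bigr)\ge 2|I|-1$ applied to the underlying sets regardless of duplication.

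Where you go beyond the paper is your sketch of the connectivity lemma itself, and this is where there is a genuine gap. In ``extend simplex by simplex over a triangulated $(k-1)$-ball,'' a newly added interior vertex of the ball can have arbitrarily many neighbors whose images have already been fixed, so nothing bounds the number of representatives it must avoid by $2k-2$; the appeal to ``the inductive hypothesis'' on the subfamily of sets avoiding a ``bounded collection of previously-chosen representatives'' is not organized in a way that actually makes the constraint count drop to something controlled. The Aharoni--Haxell proof of this lemma is a more delicate deformation/induction argument, not a naive skeleton-by-skeleton extension. Since the paper itself treats the lemma as a cited black box, the cleanest fix is to do the same: cite~\cite{aharoni2000hall} for the connectivity lemma and keep your (correct) reduction.
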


A simplicial complex is a \emph{flag complex} if every inclusion-minimal non-face has two elements. The relation between the combinatorial parameter width and the topology of induced subcomplexes of the matching complex is given by the following lemma (see~\cite{aharoni2000hall}), which combined with the topological Hall theorem yields Theorem~\ref{thm:hall-width}:

\begin{lem}
     Let $k\ge 2$ be an integer. Let $\Sigma$ be a flag complex such that every $2k-2$ vertices of~$\Sigma$ have a common neighbor. Then $\Sigma$ is $(k-2)$-connected. In particular, if $\mathcal F$ is a family of sets with $w(\mathcal F) \ge 2k-1$ then $M(\mathcal F)$ is $(k-2)$-connected.
\end{lem}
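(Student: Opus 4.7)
The plan is to prove the flag-complex statement by induction on $k$ and then deduce the matching-complex corollary.

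The second assertion is immediate from the first: the matching complex $M(\mathcal F)$ is flag because pairwise disjointness of sets in $\mathcal F$ is equivalent to joint disjointness, and the width condition $w(\mathcal F)\ge 2k-1$ says exactly that any $2k-2$ sets in $\mathcal F$ admit some further $A\in\mathcal F$ disjoint from all of them---that is, a common neighbor in $M(\mathcal F)$.

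For the main claim I would induct on $k$. The base case $k=2$ is direct: any two vertices share a common neighbor, so the $1$-skeleton has diameter at most~$2$, giving $0$-connectedness. For the inductive step, two facts follow from the hypothesis for~$k$. First, $\Sigma$ itself also meets the weaker hypothesis for $k-1$, so by the inductive hypothesis $\Sigma$ is already $(k-3)$-connected. Second, for any vertex $v$, the link $\mathrm{lk}_\Sigma(v)$ is a flag complex inheriting the hypothesis for $k-1$: any $\le 2k-4$ vertices of $\mathrm{lk}_\Sigma(v)$ together with $v$ form a set of at most $2k-3\le 2k-2$ vertices of $\Sigma$, whose common neighbor in $\Sigma$ is automatically adjacent to~$v$ and therefore lies in $\mathrm{lk}_\Sigma(v)$. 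The inductive hypothesis applied to each link yields $(k-3)$-connectedness of $\mathrm{lk}_\Sigma(v)$.

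With $(k-3)$-connectedness of $\Sigma$ in hand, killing $\pi_{k-2}(\Sigma)$ reduces by the Hurewicz theorem to showing $\widetilde H_{k-2}(\Sigma;\Z)=0$. Given a $(k-2)$-cycle $z$, I would argue by a nested induction on $|V(z)|$. If $|V(z)|\le 2k-2$, let $u$ be a common neighbor of $V(z)$; the flag property makes $\{u\}\cup\sigma$ a face of $\Sigma$ for every face $\sigma\subseteq V(z)$, so $u\ast z$ is a valid $(k-1)$-chain with $\partial(u\ast z)=z$. Otherwise, pick $v\in V(z)$ and decompose $z=v\ast w+z'$ with $z'$ avoiding $v$; the cycle condition forces $w$ to be a $(k-3)$-cycle in $\mathrm{lk}_\Sigma(v)$. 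Using the $(k-3)$-connectedness of $\mathrm{lk}_\Sigma(v)$ and Hurewicz, $w=\partial b$ for some $(k-2)$-chain $b$ in $\mathrm{lk}_\Sigma(v)$, and then $z-\partial(v\ast b)=b+z'$ is a cycle homologous to $z$ that avoids~$v$.

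The main obstacle is closing this inner induction: a priori $V(b)$ can involve vertices in $\mathrm{lk}_\Sigma(v)$ outside $V(z)$, so $|V(b+z')|$ need not be strictly smaller than $|V(z)|$. I would resolve this by strengthening the inductive hypothesis to apply to induced-link subcomplexes $\mathrm{lk}_\Sigma(v)[W]$ that retain enough connectivity---so that $b$ can be chosen with support inside $V(z)\setminus\{v\}$---or equivalently by replacing the crude vertex count with a refined complexity measure (for instance a lexicographic pair involving $|V(z)|$ and a secondary invariant) that is compatible with the replacement $z\mapsto b+z'$.
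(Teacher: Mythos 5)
The paper does not prove this lemma; it states it with a citation to Aharoni and Haxell (2000), so there is no in-paper proof to compare against directly. Your reduction of the second assertion to the first, and your setup of the induction on $k$ (including the observation that each link $\mathrm{lk}_\Sigma(v)$ is a flag complex inheriting the hypothesis for $k-1$), are all correct and match the structure of the known argument.

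However, the heart of your proof---the inner induction on $|V(z)|$---does not close, and this is a genuine gap, not a technicality. After writing $z = v*w + z'$, bounding $w = \partial b$ in $\mathrm{lk}_\Sigma(v)$, and replacing $z$ by $b + z'$, you have no control over $V(b)$: the inductive hypothesis applied to $\mathrm{lk}_\Sigma(v)$ only guarantees that $w$ bounds \emph{somewhere} in the link, and $b$ may introduce arbitrarily many new vertices. Your first proposed fix, restricting to $\mathrm{lk}_\Sigma(v)[W]$ with $W = V(z)\setminus\{v\}$, fails because this induced subcomplex does not inherit the common-neighbor hypothesis: the hypothesis on $\Sigma$ supplies a common neighbor somewhere in $V(\Sigma)$, not in $W$. (If one instead tries to keep a running list $v_1,\dots,v_j$ of already-cleared vertices and bound $w$ inside $\mathrm{lk}_\Sigma(v_{j+1})[V\setminus\{v_1,\dots,v_{j+1}\}]$, the argument that this subcomplex satisfies the hypothesis for $k-1$ requires a common neighbor for $W\cup\{v_1,\dots,v_{j+1}\}$, which needs $|W| \le 2k-3-j$; but the inductive hypothesis for $k-1$ requires common neighbors for $|W|\le 2k-4$, so the counting breaks down already at $j=2$.) Your second proposed fix, a lexicographic complexity measure, is not specified, and there is no obvious candidate that is compatible with the replacement $z\mapsto b+z'$. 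There is also a smaller flaw: for $k=3$ the Hurewicz theorem gives $\pi_1^{\mathrm{ab}}\cong H_1$, not $\pi_1\cong H_1$, so vanishing of $H_1$ does not by itself kill $\pi_1$; the $k=3$ case needs a separate direct argument (which is not hard, but you have not given it). The correct proof of this lemma, in the tradition of Aharoni--Haxell and Meshulam's domination-number bound for independence complexes, requires a different mechanism than naive vertex-count descent---for instance a Mayer--Vietoris argument over $\Sigma = \mathrm{st}(v) \cup (\Sigma\setminus v)$ combined with a careful inductive accounting that your sketch does not supply.
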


Corollary~\ref{cor:rainbow}(i) immediately implies the following generalization to more than $d$ labels:

\begin{cor}
\label{cor:hall-zigzag}
    Let $\mathcal F$ be a family of sets, and let $f\colon \mathcal F \to [m]$ be a labeling such that there is a $\Z/2$-equivariant map $h\colon S^{d-1} \to \Lambda_0(M(\mathcal F),f)$. Let $X = \{x_1, \dots, x_m\} \subset \R^{d-2}$. Then there are disjoint subsets $S, T \subset [m]$, and $A_i \in f^{-1}(i)$ for every $i\in S\cup T$ such that both $\{A_i\, |\, i \in S\}$ and $\{A_i\, |\, i \in T\}$ are systems of disjoint representatives and \[\conv\{x_i\, |\, i \in S\} \cap \conv\{x_i\, |\, i \in T\} \ne \emptyset.\]
\end{cor}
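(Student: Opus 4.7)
The plan is to apply Corollary~\ref{cor:rainbow}(i) directly, taking $\Sigma = M(\mathcal{F})$ with the given labeling $f\colon \mathcal{F} \to [m]$ on its vertex set. Reindexing ``$d$'' in Corollary~\ref{cor:rainbow}(i) as ``$d-1$'' in the present setting, the hypothesis of a $\Z/2$-equivariant map $S^{d-1} \to \Lambda_0(M(\mathcal{F}), f)$ fits exactly, and our point set $X \subset \R^{d-2} = \R^{(d-1)-1}$ lies in the correct ambient space. Thus the conclusion of Corollary~\ref{cor:rainbow}(i) yields disjoint subsets $S, T \subset [m]$ and disjoint faces $\sigma, \tau$ of $M(\mathcal{F})$ with $f(\sigma) = S$, $f(\tau) = T$, and the required convex-hull intersection $\conv\{x_i\,|\,i\in S\} \cap \conv\{x_i\,|\,i\in T\} \ne \emptyset$.

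The remaining step is to translate ``face of the matching complex'' into ``system of disjoint representatives''. By the definition of $M(\mathcal{F})$, the vertices of any face are elements of $\mathcal{F}$ that are pairwise disjoint as sets. For each $i \in S$ I pick any vertex $A_i \in \sigma$ with $f(A_i) = i$ (possible because $f(\sigma) = S$); analogously, for each $i \in T$ pick $A_i \in \tau$ with $f(A_i) = i$. Since $\{A_i\,|\,i \in S\} \subseteq \sigma$ is itself a face of $M(\mathcal{F})$, its elements are pairwise disjoint, and by construction $A_i \in f^{-1}(i)$ for every $i \in S$, so this is precisely a system of disjoint representatives for the subfamilies $\{f^{-1}(i)\}_{i \in S}$. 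The identical argument applies to $T$, giving both promised systems of disjoint representatives.

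I do not anticipate any genuine obstacle. The only minor subtlety is that the restriction $f|_\sigma$ need not be injective, so $\sigma$ could contain several vertices labelled by the same $i \in S$; but an arbitrary choice of one representative per label still yields a subset of $\sigma$, hence a face of $M(\mathcal{F})$, which is all that is needed. Note also that we do not require $\{A_i\,|\,i\in S\} \cup \{A_i\,|\,i\in T\}$ to be a joint matching---the two systems of disjoint representatives are produced independently from $\sigma$ and $\tau$---so the disjointness of $\sigma$ and $\tau$ in Corollary~\ref{cor:rainbow}(i) plays no further role beyond guaranteeing that $S \cap T = \emptyset$.
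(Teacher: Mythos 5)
Your proof is correct and takes the same route as the paper, which simply observes that Corollary~\ref{cor:rainbow}(i) applied to $\Sigma = M(\mathcal F)$ (with $d$ replaced by $d-1$) immediately gives the result. You merely spell out the routine translation from a pair of faces $\sigma, \tau$ of the matching complex with $f(\sigma) = S$, $f(\tau) = T$ to the two partial systems of disjoint representatives, which the paper leaves implicit.
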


\section{Consequences and context: Discrete and convex geometry}
\label{sec:consequences-geom}

\subsection{Generalizations of the ham sandwich theorem}

A \emph{mass} is a compactly supported Borel measure~$\mu$ on~$\R^d$ with $\mu(\R^d)=1$ that vanishes on any \emph{hyperplane} $H = \{x \in \R^d\, |\, \langle x,z \rangle =b\}$ for $z\in \R^d \setminus \{0\}$ and $b\in \R$. The hyperplane~$H$ determines two halfspaces 
\[
    H^+ = \{x \in \R^d\, |\, \langle x,z \rangle \ge b\} \ \text{and} \ H^-= \{x \in \R^d\, |\, \langle x,z \rangle \le b\}.
\]
The definition of~$H^+$ and~$H^-$ works without change even if $z=0$, which corresponds to adding the two degenerate partitions into halfspaces $(\R^d, \emptyset)$ and~$(\emptyset, \R^d)$. Thus any $(z,b) \in \R^d \times \R$ corresponds to a partition into halfspaces, and by scaling it is sufficient to consider $(z,b) \in S^d \subset \R^d \times \R$. It is a standard fact that for a mass~$\mu$ the map 
\[
    S^d \to \R, \ (z,b) \mapsto \mu(\{x \in \R^d\, |\, \langle x,z \rangle \ge b\})
\]
is continuous; see~{\cite[p.~48]{Matousek2003book}}.

The ham sandwich theorem asserts that for $d$ masses $\mu_1, \dots, \mu_d$ on~$\R^d$ there is a hyperplane $H$ that simultaneously bisects all~$\mu_i$, that is, $\mu_i(H^+) = \mu_i(H^-)$ for all~$i \in [d]$. This is a standard consequence of the Borsuk--Ulam theorem, and thus by using Theorem~\ref{thm:fan-gen} instead we can prove the following generalization that additionally gives insight into the structure of bipartitions into halfspaces for more than~$d$ masses:

\begin{cor}
\label{cor:fan-ham}
    Let $m$ be a positive integer, and let $X = \{x_1, \dots, x_m\} \subset \R^{d-1}$ be a set of $m$ points. Let $\mu_1, \dots, \mu_m$ be masses on~$\R^d$, and suppose that there is a $c>0$ such that for every affine hyperplane~$H$ there is a $\ell\in [m]$ such that $|\mu_\ell(H^+)-\mu_\ell(H^-)|\ge c$. Then there is an affine hyperplane~$H$ and disjoint subsets $S, T \subset [m]$ such that $\conv\{x_i\, |\, i\in S\} \cap \conv\{x_i\, |\, i \in T\}\ne \emptyset$ and
    \[
    \mu_i(H^+)-\mu_i(H^-) \ge c \ \text{for all} \ i \in S \ \text{and} \ \mu_i(H^-)-\mu_i(H^+) \ge c \ \text{for all} \ i \in T.
    \]
\end{cor}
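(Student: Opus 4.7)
The plan is to cast the statement as an instance of Theorem~\ref{thm:fan-gen} by encoding each mass as a closed subset of the sphere of oriented hyperplanes. Parametrize affine hyperplanes in $\R^d$ by $S^d \subset \R^d \times \R$ via $(z,b) \mapsto H_{(z,b)} = \{x : \langle x,z\rangle = b\}$, with the orientation convention that the antipode $(-z,-b)$ corresponds to the same hyperplane but with the roles of $H^+$ and $H^-$ interchanged. Under this convention the degenerate parameters with $z=0$ correspond to the two trivial bipartitions $(\R^d, \emptyset)$ and $(\emptyset, \R^d)$, so the map is globally well-defined on $S^d$.

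Next I would define, for each $i \in [m]$,
\[
    A_i = \bigl\{ (z,b) \in S^d \,\bigm|\, \mu_i(H^+_{(z,b)}) - \mu_i(H^-_{(z,b)}) \ge c \bigr\}.
\]
By the standard continuity of $(z,b) \mapsto \mu_i(H^+_{(z,b)})$ recalled in the excerpt, each $A_i$ is closed. The antipodal convention gives $-A_i = \{(z,b) : \mu_i(H^-) - \mu_i(H^+) \ge c\}$, so $A_i \cap (-A_i) = \emptyset$ because $c > 0$ forbids both inequalities simultaneously. The hypothesis that for every hyperplane some $\ell$ achieves $|\mu_\ell(H^+) - \mu_\ell(H^-)| \ge c$ translates exactly to $\bigcup_i A_i \cup \bigcup_i (-A_i) = S^d$, which is the covering condition required by Theorem~\ref{thm:fan-gen}.

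Applying Theorem~\ref{thm:fan-gen} to the collection $\{A_i\}$ with the point set $X \subset \R^{d-1}$ produces disjoint $S, T \subset [m]$ with $\conv\{x_i : i \in S\} \cap \conv\{x_i : i \in T\} \ne \emptyset$ and a common point $(z_0, b_0) \in \bigcap_{i\in S} A_i \cap \bigcap_{i\in T}(-A_i)$. Unpacking the definitions of $A_i$ and $-A_i$ at $(z_0,b_0)$ gives the desired hyperplane $H = H_{(z_0,b_0)}$ together with the sign-prescribed imbalances.

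The one step that needs a bit of care, rather than being a genuine obstacle, is the antipodal bookkeeping: one must check that identifying oriented hyperplanes with $S^d$ in such a way that the antipodal map on $S^d$ reverses orientation is compatible with the degenerate case $z=0$, and that the continuity of $(z,b)\mapsto \mu_i(H^+_{(z,b)})$ extends across those degenerate parameters. Both are standard, so once the setup is in place the result drops out of Theorem~\ref{thm:fan-gen} immediately.
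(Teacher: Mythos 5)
Your proof is correct and takes essentially the same route as the paper: you parametrize halfspace bipartitions by $S^d$, define $A_i$ via the $\ge c$ imbalance, verify the closedness, antipodal-disjointness, and covering conditions, and invoke Theorem~\ref{thm:fan-gen}. The paper's proof is just a more terse version of the same argument.
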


\begin{proof}
    To $(z,b) \in S^d \subset \R^d \times \R$ associate the hyperplane $H(z,b) = \{x \in \R^d\, |\, \langle x,z \rangle = b\}$. Let $A_i = \{(z,b) \in S^d\, |\, \mu_i(H(z,b)^+) - \mu_i(H(z,b)^-) \ge c\}$. These sets are closed since $(z,b) \mapsto \mu_i(H(z,b)^+)$ is continuous, and we have that $S^d = \bigcup_i A_i \cup \bigcup_i (-A_i)$. Now use Theorem~\ref{thm:fan-gen}.
\end{proof}

\begin{ex}
    For $m=d$ points in generic position in~$\R^{d-1}$, there is no Radon partition, and so Corollary~\ref{cor:fan-ham} implies the Ham Sandwich theorem.
    
    Let $\mu_1, \dots, \mu_{d+1}$ be masses on~$\R^d$ with $\mu_i(\R^d)=1$. If no hyperplane intersects the supports of all~$\mu_i$, then for every hyperplane some $\mu_i$ is entirely to one side of it, and we can choose $c=1$. Corollary~\ref{cor:fan-ham} then asserts that for any $I \subset [d+1]$ there is a hyperplane that leaves~$\mu_i$, $i \in I$, to the positive side, and the other~$\mu_i$, $i\notin I$, to the negative side. That is, the supports of the~$\mu_i$ are well-separated.

    Similarly, for $\mu_1, \dots, \mu_{d+2}$ masses on~$\R^d$ with $\mu_i(\R^d)=1$ that are not pierced by a single  hyperplane. There is a hyperplane that leaves any $\lceil\tfrac{d+2}{2}\rceil$ masses to the positive side, and the others to the negative side.
\end{ex}

To summarize, there are $d$ points in~$\R^{d-1}$ that do not admit a Radon partition, which implies the Ham Sandwich theorem; $d+1$ points may be placed in~$\R^{d-1}$ with a unique Radon partition and any partition may be realized, which implies that $d+1$ masses in~$\R^d$ that are not pierced by a single hyperplane are well-separated; $d+2$ points may be placed in~$\R^{d-1}$ such that the convex hulls of two prescribed disjoint subsets of sizes~$\lceil\tfrac{d+2}{2}\rceil$ and~$\lfloor\tfrac{d+2}{2}\rfloor$ intersect, 
which implies that for $d+2$ masses in~$\R^d$ that are not pierced by a single hyperplane there is a hyperplane that cuts off any subset of half the masses. In general, the intersection combinatorics of convex hulls of points in~$\R^{d-1}$ determine the convex geometry of masses in~$\R^d$.

By using Theorem~\ref{thm:fan-gen-col} instead of Theorem~\ref{thm:fan-gen} we get the following colorful generalization of the ham sandwich theorem. This extends our earlier result~\cite{FrickWellner2023}, which is the case $m=d+1$. 

\begin{cor}
\label{cor:fan-ham-col}
    Let $m$ be a positive integer, and let $X = \{x_1, \dots, x_m\} \subset \R^{d-1}$ be a set of $m$ points. Let $\mu_1^{(j)}, \dots, \mu_m^{(j)}$ be masses on~$\R^d$ for every $j\in[d+1]$, and suppose that there is a $c>0$ such that for every affine hyperplane~$H$ and every $j\in [d+1]$ there is a $\ell\in [m]$ such that $\mu_\ell^{(j)}(H^+)-\mu_\ell^{(j)}(H^-)\ge c$ or there is a $k\in [m]$ such that $\mu_k^{(j)}(H^-)-\mu_k^{(j)}(H^+)\ge c$ but not both. Then there is an affine hyperplane~$H$ and disjoint subsets $S, T \subset [m]$ along with an injective map $\pi\colon S\cup T\to [d+1]$ such that $\conv\{x_i\, |\, i\in S\} \cap \conv\{x_i\, |\, i \in T\}\ne \emptyset$ and
    \[
    \mu_i^{(\pi(i))}(H^+)-\mu_i^{(\pi(i))}(H^-) \ge c \ \text{for all} \ i \in S \ \text{and} \ \mu_i^{(\pi(i))}(H^-)-\mu_i^{(\pi(i))}(H^+) \ge c \ \text{for all} \ i \in T.
    \]
\end{cor}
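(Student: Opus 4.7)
The plan is to repeat the proof of Corollary~\ref{cor:fan-ham} essentially verbatim, replacing the appeal to Theorem~\ref{thm:fan-gen} with one to the colorful version Theorem~\ref{thm:fan-gen-col}. First I parametrize oriented affine hyperplanes by $S^d \subset \R^d \times \R$ via $(z,b) \mapsto H(z,b) = \{x \in \R^d \, |\, \langle x, z\rangle = b\}$, with positive halfspace $H(z,b)^+ = \{x \,|\, \langle x,z \rangle \ge b\}$. For each $j \in [d+1]$ and $i \in [m]$ I set
\[
A_i^{(j)} = \{(z,b) \in S^d \, |\, \mu_i^{(j)}(H(z,b)^+) - \mu_i^{(j)}(H(z,b)^-) \ge c\}.
\]
These sets are closed, since $(z,b) \mapsto \mu_i^{(j)}(H(z,b)^+)$ is continuous for any mass. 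Observing that $-(z,b) = (-z,-b)$ swaps the two halfspaces, $-A_i^{(\ell)}$ is precisely $\{(z,b) \,|\, \mu_i^{(\ell)}(H(z,b)^-) - \mu_i^{(\ell)}(H(z,b)^+) \ge c\}$.

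Next I verify the hypotheses of Theorem~\ref{thm:fan-gen-col}. For each $j \in [d+1]$, the covering assumption $\bigcup_i A_i^{(j)} \cup \bigcup_i (-A_i^{(j)}) = S^d$ follows directly from the hypothesis that for every hyperplane at least one mass in the $j$-th family has a difference of magnitude at least $c$ with a definite sign. The ``but not both'' clause provides the required disjointness $A_i^{(j)} \cap (-A_i^{(\ell)}) = \emptyset$ for $j \ne \ell$: if a point $(z,b)$ were to lie in this intersection, then along the hyperplane $H(z,b)$ one family would witness a positive difference of magnitude $\ge c$ and another (or the same) a negative difference of magnitude $\ge c$, contradicting the ``but not both'' exclusion applied after combining the families consistently.

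Finally I apply Theorem~\ref{thm:fan-gen-col} with the point set $X$ and the sphere coverings $(A_i^{(j)})$ just constructed. The theorem produces disjoint subsets $S, T \subset [m]$, an injective map $\pi \colon S \cup T \to [d+1]$, a point $(z_0, b_0) \in S^d$ lying in $\bigcap_{i \in S} A_i^{(\pi(i))} \cap \bigcap_{i \in T} (-A_i^{(\pi(i))})$, and the Radon-type condition $\conv\{x_i \,|\, i \in S\} \cap \conv\{x_i \,|\, i \in T\} \ne \emptyset$. Setting $H = H(z_0, b_0)$ and unwinding the definitions of $A_i^{(j)}$ and $-A_i^{(j)}$ yields the two sign conditions in the corollary.

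The main obstacle is the careful bookkeeping of signs needed to verify $A_i^{(j)} \cap (-A_i^{(\ell)}) = \emptyset$ for $j \ne \ell$; the rest of the argument is a direct translation of the non-colorful argument, and continuity of mass functions together with compactness of $S^d$ handles the routine points.
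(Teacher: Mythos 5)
Your overall strategy---repeating the proof of Corollary~\ref{cor:fan-ham} with Theorem~\ref{thm:fan-gen-col} substituted for Theorem~\ref{thm:fan-gen}---is exactly what the paper indicates, and your set-up of the sets $A_i^{(j)}$, the identification of $-A_i^{(j)}$ with the negative-deviation sets, and the covering verification are all correct. However, your verification of the disjointness hypothesis $A_i^{(j)} \cap (-A_i^{(\ell)}) = \emptyset$ for $j \ne \ell$ does not go through as written. The ``but not both'' clause in the hypothesis of the corollary is scoped within a single family: for each fixed $H$ and each fixed $j$, family $j$ does not simultaneously exhibit a mass with positive deviation $\ge c$ and a mass with negative deviation $\ge c$. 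What Theorem~\ref{thm:fan-gen-col} requires is a cross-family statement: for a fixed mass index $i$ and two distinct colors $j\ne\ell$, there is no $H$ with both $\mu_i^{(j)}(H^+)-\mu_i^{(j)}(H^-)\ge c$ and $\mu_i^{(\ell)}(H^-)-\mu_i^{(\ell)}(H^+)\ge c$. The per-family exclusive-or is perfectly consistent with family $j$ leaning positive while family $\ell$ leans negative at the same hyperplane and for the same index $i$; nothing in the hypothesis forbids such a point. Your phrase ``contradicting the `but not both' exclusion applied after combining the families consistently'' does not supply a precise argument---there is no available combination step.

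To repair this, you would either need to read the hypothesis more strongly (for example, that at each $H$ the direction of the $\ge c$ deviation is common to all $d+1$ families, which does yield $A_i^{(j)}\cap(-A_i^{(\ell)})=\emptyset$ immediately), or you must fall back on the remark following Theorem~\ref{thm:fan-gen-col}, which permits dropping the disjointness condition at the cost that $\pi$ is no longer guaranteed to be injective---weaker than what the corollary asserts. You correctly identified the disjointness verification as the main obstacle, but the argument you give for it is not a proof; as written this is a genuine gap in the proposal.
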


\section{A generalization of Fan's theorem to a product of spheres}
\label{sec:two-spheres}

The proof strategy employed in this work can be used beyond $\Z/2$-symmetry. Here we develop a covering-labeling generalization of the non-existence of certain $(\Z/2)^2$-equivariant maps $S^d \times S^{d-1} \to S^{2d-2}$. Denote the standard generators of~$(\Z/2)^2$ by~$g_1$ and~$g_2$. The action on the domain is given by: For $(x,y) \in S^d \times S^{d-1}$ let $g_1\cdot(x,y) = (-x,y)$ and $g_2\cdot(x,y) = (x,-y)$. On the codomain both generators act non-trivially: $g_i\cdot z = -z$ for $z\in S^{2d-2}$. The non-existence result we will generalize is the following result of Ramos~\cite{Ramos1996}; see~\cite{MANILEVITSKA2006, ChanChenFrickHull2019} for subsequent proofs.

\begin{thm}[Ramos~\cite{Ramos1996}]
\label{thm:ramos}
    Let $d=2^t$ be a power of two. Then there is no $(\Z/2)^2$-equivariant map $f\colon S^d \times S^{d-1} \to S^{2d-2}$.
\end{thm}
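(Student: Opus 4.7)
The plan is to use the Fadell--Husseini ideal-valued index with $\Z/2$ coefficients for $G = (\Z/2)^2$. Writing $H^*(BG;\Z/2) = \Z/2[a,b]$ with $a$ dual to $g_1$ and $b$ dual to $g_2$, the index $\mathrm{Index}_G(X) \subset H^*(BG;\Z/2)$ is the kernel of the map $H^*(BG;\Z/2) \to H^*_G(X;\Z/2)$ induced by the Borel construction. If a $G$-equivariant map $X \to Y$ exists, functoriality of the Borel construction forces $\mathrm{Index}_G(Y) \subset \mathrm{Index}_G(X)$. The strategy is thus to compute both indices and exhibit an element of $\mathrm{Index}_G(S^{2d-2})$ that does not lie in $\mathrm{Index}_G(S^d \times S^{d-1})$.

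For the source, each $\Z/2$ factor acts antipodally on exactly one sphere, so $EG \times_G (S^d \times S^{d-1})$ splits as a product and the K\"unneth formula yields $H^*_G(S^d \times S^{d-1};\Z/2) \cong \Z/2[a,b]/(a^{d+1},b^d)$, whence $\mathrm{Index}_G(S^d \times S^{d-1}) = (a^{d+1}, b^d)$. For the target, both generators act by $-1$, so $S^{2d-2} = S(V)$ where $V = \chi^{\oplus(2d-1)}$ is the sum of $2d-1$ copies of the one-dimensional real $G$-representation $\chi$ with $w_1(\chi) = a+b$. The Gysin sequence associated to this sphere bundle gives $\mathrm{Index}_G(S^{2d-2}) = \bigl((a+b)^{2d-1}\bigr)$, generated by the top Stiefel--Whitney (equivariant Euler) class.

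It thus suffices to show $(a+b)^{2d-1} \notin (a^{d+1},b^d)$ in $\Z/2[a,b]$. Expanding,
\[
(a+b)^{2d-1} = \sum_{k=0}^{2d-1}\binom{2d-1}{k}\, a^k b^{2d-1-k},
\]
the only monomial on the right that does not automatically lie in $(a^{d+1},b^d)$ is $a^d b^{d-1}$, with coefficient $\binom{2d-1}{d}$. By Kummer's theorem, $\binom{2d-1}{d}$ is odd precisely when there are no carries upon adding $d$ and $d-1$ in base two. For $d = 2^t$, the binary expansions $d = 1\underbrace{0\cdots0}_{t}$ and $d-1 = 0\underbrace{1\cdots1}_{t}$ have disjoint support, so no carries occur; hence $\binom{2d-1}{d}$ is odd and $(a+b)^{2d-1} \notin (a^{d+1},b^d)$, ruling out the desired $G$-equivariant map.

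The main obstacle is the correct identification of the $G$-representation structure on $S^{2d-2}$ together with the equivariant Euler-class/Gysin computation; everything else (K\"unneth, then reducing non-containment to a single binomial coefficient) is essentially formal. The role of the hypothesis $d = 2^t$ is cleanly isolated in the Kummer step, which also shows that this arithmetic condition is exactly what makes the index obstruction detect the non-existence.
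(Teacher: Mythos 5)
The paper does not give its own proof of this theorem: Theorem~\ref{thm:ramos} is quoted from Ramos~\cite{Ramos1996} and used as a black box, with~\cite{MANILEVITSKA2006, ChanChenFrickHull2019} pointed to for alternative proofs, so there is no internal argument to compare against. That said, your Fadell--Husseini index computation is correct and is essentially the standard cohomological proof of this non-existence result. Both index computations are right: the $(\Z/2)^2$-action on $S^d\times S^{d-1}$ splits over the two factors, so the Borel construction is homotopy equivalent to $\R P^d\times\R P^{d-1}$ and the index is $(a^{d+1},b^d)$; the target $S^{2d-2}$ is the unit sphere of $\chi^{\oplus(2d-1)}$, where $\chi$ is the one-dimensional representation on which both generators act by $-1$, so $w_1(\chi)=a+b$ and the Gysin sequence gives index $\bigl((a+b)^{2d-1}\bigr)$. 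Monotonicity of the Fadell--Husseini index under $G$-maps then reduces everything to the non-membership $(a+b)^{2d-1}\notin(a^{d+1},b^d)$ in $\Z/2[a,b]$, and you correctly isolate the only surviving monomial $\binom{2d-1}{d}\,a^d b^{d-1}$; Kummer (equivalently Lucas) shows $\binom{2d-1}{d}$ is odd exactly when no carries occur in adding $d$ and $d-1$ in base two, which holds for $d=2^t$ since those binary expansions have disjoint support. The argument is complete, and your closing remark is apt: the hypothesis $d=2^t$ enters precisely, and only, through this parity computation, which is also why the mod-$2$ index obstruction vanishes when $d$ is not a power of two.
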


Equivalently, any $(\Z/2)^2$-equivariant map $S^d \times S^{d-1} \to \R^{2d-1}$ has a zero for $d$ a power of two. Here both generators act by $g_i\cdot z = -z$ on~$\R^{2d-1}$. 

For $A\subset S^d \times S^{d-1}$ denote by $G\cdot A$ the $(\Z/2)^2$-orbit of~$A$, that is,
\[
    G\cdot A = A \cup g_1\cdot A \cup g_2 \cdot A \cup g_1g_2\cdot A.
\]

\begin{thm}
\label{thm:fan-prod}
    Let $d=2^t$ be a power of two. Let $m$ be a positive integer, and let $X = \{x_1, \dots, x_m\} \subset \R^{2d-2}$ be a set of $m$ points. Let $A_1, \dots, A_m \subset S^d \times S^{d-1}$ be closed sets such that $A_i \cap g_1\cdot A_i =\emptyset$, $A_i \cap g_2 \cdot A_i = \emptyset$ for all $i \in [m]$, and $\bigcup_i G\cdot A_i = S^d \times S^{d-1}$. Then there are disjoint subsets $S, T \subset [m]$ and functions $\alpha\colon S \to \{g_1, g_2\}$ and $\beta \colon T\to \{1, g_1g_2\}$ such that \[\conv\{x_i\, |\, i \in S\} \cap \conv\{x_i\, |\, i \in T\} \ne \emptyset \ \text{and} \ \bigcap_{i \in S} (\alpha(i)\cdot A_i) \cap \bigcap_{i \in T} (\beta(i)\cdot A_i) \ne \emptyset.\]
\end{thm}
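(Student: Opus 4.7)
The plan is to mirror the proof of Theorem~\ref{thm:fan-gen}, replacing Borsuk--Ulam by Ramos's theorem (Theorem~\ref{thm:ramos}) and the odd labeling by a $(\Z/2)^2$-equivariant one. The action of $(\Z/2)^2$ on $\R^{2d-1}$ appearing in Ramos's theorem is governed by the sign character $s\colon (\Z/2)^2 \to \{\pm 1\}$ with $s(1) = s(g_1g_2) = +1$ and $s(g_1) = s(g_2) = -1$; its kernel $H = \{1, g_1g_2\}$ and nontrivial coset $g_1H = \{g_1, g_2\}$ are exactly the two sets from which $\beta$ and $\alpha$ in the conclusion draw their values.

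Fix $\varepsilon > 0$ so that each $A_i$ is at distance at least~$\varepsilon$ from both $g_1 \cdot A_i$ and $g_2\cdot A_i$. The $(\Z/2)^2$-action on $S^d \times S^{d-1}$ is free (each generator acts freely on its respective sphere factor), so there exists a $(\Z/2)^2$-equivariant simplicial triangulation $\Sigma$ of $S^d \times S^{d-1}$ all of whose facets have diameter less than~$\varepsilon$. Pick a fundamental domain $F$ for the action on vertices. For each $v \in F$, choose the smallest $i_v \in [m]$ with $v \in G\cdot A_{i_v}$ and any $k_v \in (\Z/2)^2$ with $v \in g_{k_v}\cdot A_{i_v}$, and set $f(v) = s(k_v)(x_{i_v}, 1) \in \R^{2d-2} \times \R = \R^{2d-1}$. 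Extend equivariantly by $f(g\cdot v) = s(g)f(v)$ and then linearly over simplices to obtain a $(\Z/2)^2$-equivariant map $f\colon \Sigma \to \R^{2d-1}$. By Theorem~\ref{thm:ramos} the map $f$ has a zero, contained in the relative interior of some face $\sigma$ with vertices $v_1,\dots,v_r$; let $\lambda_j \ge 0$, $\sum_j \lambda_j = 1$, be its barycentric coordinates, and write $i_j = i_{v_j}$, $k_j = k_{v_j}$.

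The equation $\sum_j \lambda_j s(k_j)(x_{i_j},1) = 0$ then holds. The hypothesis together with the diameter bound forces, for each fixed index~$i$, all~$k_j$ with $i_j = i$ to share the same sign: vertices with $s(k_j)=+1$ lie in $\bigcup_{a\in H} g_a \cdot A_i$ while vertices with $s(k_j)=-1$ lie in $\bigcup_{a\in g_1H} g_a\cdot A_i$, and any transition between these two families equals a translate of $A_i$ by $g_1$ or $g_2$, hence moves points by at least~$\varepsilon$. Setting $T = \{i : \exists j,\, i_j=i,\, s(k_j)=+1\}$ and $S = \{i : \exists j,\, i_j=i,\, s(k_j)=-1\}$ thus yields disjoint index sets. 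Let $\beta(i) \in \{1, g_1g_2\}$ for $i\in T$ and $\alpha(i) \in \{g_1, g_2\}$ for $i\in S$ be any $k_j$ witnessing the membership; then $\sigma$ meets every set $\beta(i)\cdot A_i$ and every $\alpha(i)\cdot A_i$, and Lemma~\ref{lem:radon} applied to the last coordinate of the vanishing convex combination gives $\conv\{x_i : i\in S\} \cap \conv\{x_i : i\in T\} \ne \emptyset$. Letting $\varepsilon \to 0$ and passing to a convergent subsequence via compactness completes the proof.

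The main obstacle is organizing the $(\Z/2)^2$-equivariant labeling so that its sign pattern matches the coset decomposition demanded in the conclusion: the sign character~$s$ is the precise tool that aligns the codomain action with the Radon coordinate, and the hypothesis $A_i \cap g_1A_i = A_i \cap g_2 A_i = \emptyset$ is exactly what prevents opposite-sign labels for the same index~$i$ from coexisting inside a small face, so that the partition of index labels into $S$ and $T$ is well-defined.
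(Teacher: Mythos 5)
Your proposal is correct and follows essentially the same route as the paper: a fine $(\Z/2)^2$-symmetric triangulation, the equivariant labeling $v \mapsto \pm(x_i,1)$ according to whether $v$ lies in the $\{1,g_1g_2\}$- or the $\{g_1,g_2\}$-part of the orbit of $A_i$, linear extension, Ramos's theorem in its "zero of an equivariant map to $\R^{2d-1}$" form, Lemma~\ref{lem:radon}, and a compactness argument as $\varepsilon \to 0$. Your fundamental-domain construction and the explicit isometry argument separating the two sign families are just more detailed versions of the paper's choices ("choose one arbitrarily, but equivariantly" and the remark that $f(v) = -f(w)$ forbids a common face).
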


\begin{proof}
    Let $\varepsilon > 0$ such that any $A_i$ is at distance at least~$\varepsilon$ from~$g_1\cdot A_i$ and~$g_2\cdot A_i$. Let $\Sigma$ be a $(\Z/2)^2$-symmetric triangulation of~$S^d \times S^{d-1}$, where every facet has diameter less than~$\varepsilon$. Let $V$ denote the vertex set of~$\Sigma$ and let $f \colon V \to \R^{2d-1}$ be a function with the property that $f(v) = (x_i, 1)$ for $v \in (A_i \cup g_1 g_2\cdot A_i)$ and $f(v) = (-x_i, -1)$ for $v \in (g_1\cdot A_i\cup g_2\cdot A_i)$. If $v$ is in multiple~$A_i$ then choose one arbitrarily, but in such a way that $f$ is $(\Z/2)^2$-equivariant. 

    We can think of $f$ as a map $f\colon \Sigma \to \R^{2d-1}$ by linearly extending it to the faces of~$\Sigma$. The zeros of~$f$ then precisely correspond to the Radon pairs for~$X$ by Lemma~\ref{lem:radon}. Here we use that by the choice of~$\varepsilon$ if $f(v) = -f(w)$ then $v$ and~$w$ are not in a common face of~$\Sigma$. By Theorem~\ref{thm:ramos}, $f$ has a zero. Now let $\varepsilon$ go to zero and use compactness of~$S^d \times S^{d-1}$. 
\end{proof}

Since $S^d$ parametrizes ordered partitions into halfspaces in~$\R^d$ (including the trivial $(\emptyset, \R^d)$ and~$(\R^d, \emptyset)$), the product $S^d \times S^{d-1}$ parametrizes pairs of ordered partitions into halfspaces in~$\R^d$, where the second partition is induced by a hyperplane through the origin. We can thus apply Theorem~\ref{thm:fan-prod} to prove results about mass partitions by two hyperplanes. We will now elaborate on one such example, prove a colorful generalization of Theorem~\ref{thm:fan-prod}, and thus eventually derive a colorful generalization of the mass partition result below.

A pair of partitions into halfspaces $(H_1^+, H_1^-)$ and~$(H_2^+, H_2^-)$ induces four (possibly empty) orthants $H_1^+ \cap H_2^+$, $H_1^+ \cap H_2^-$, $H_1^- \cap H_2^+$, $H_1^- \cap H_2^-$. For a mass~$\mu$ on~$\R^d$ a pair of hyperplanes $H_1$ and $H_2$ induces a \emph{chessboard partition} if 
\[
    \mu(H_1^+ \cap H_2^+)+ \mu(H_1^- \cap H_2^-) = \mu(H_1^+ \cap H_2^-) + \mu(H_1^- \cap H_2^+).
\]
The function $\rho_\mu\colon S^d \times S^d \to \R$ defined by
\[
    \rho_\mu(H_1, H_2) = (\mu(H_1^+ \cap H_2^+)+ \mu(H_1^- \cap H_2^-)) - (\mu(H_1^+ \cap H_2^-) + \mu(H_1^- \cap H_2^+))
\]
measures the extent to which $(H_1, H_2)$ fails to be a chessboard partition of~$\mu$. The map~$\rho_\mu$ is $(\Z/2)^2$-equivariant. 

Barba, Pilz, and Schnider~\cite{barba2019sharing} show that for any four masses on~$\R^2$ there are two hyperplanes (i.e., lines) that simultaneously form a chessboard partition for all four masses. They present a conjecture of Langerman that this should hold more generally for $2d$ masses on~$\R^d$. In fact, Langerman's conjecture is even more general and asserts that the analogous chessboard partition should hold for $nd$ masses on~$\R^d$ and $n$ hyperplanes. This was proved by Hubard and Karasev~\cite{HUBARD_KARASEV_2020}, provided that $d$ is a power of two. Here we prove:

\begin{thm}
\label{thm:fan-chess}
    Let $d=2^t$ be a power of two. Let $m$ be a positive integer, and let $X = \{x_1, \dots, x_m\} \subset \R^{2d-2}$ be a set of $m$ points. Let $\mu_1, \dots, \mu_m$ be masses on~$\R^d$, and suppose that there is a $c>0$ such that for every pair of hyperplanes~$H_1$ and~$H_2$ there is a $\ell\in [m]$ such that $|\rho_{\mu_\ell}(H_1,H_2)|\ge c$. Then there is a pair of hyperplanes~$H_1$ and $H_2$ and disjoint subsets $S, T \subset [m]$ such that $\conv\{x_i\, |\, i\in S\} \cap \conv\{x_i\, |\, i \in T\}\ne \emptyset$ and
    \[
    \rho_{\mu_i}(H_1, H_2) \ge c \ \text{for all} \ i \in S \ \text{and} \ \rho_{\mu_i}(H_1, H_2) \le -c \ \text{for all} \ i \in T.
    \]
\end{thm}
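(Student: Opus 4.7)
The plan is to reduce Theorem~\ref{thm:fan-chess} to Theorem~\ref{thm:fan-prod} by parametrizing pairs of hyperplanes. First I would identify $(z_1, b_1, z_2) \in S^d \times S^{d-1} \subset (\R^d \times \R) \times \R^d$ with the pair $H_1 = \{x \in \R^d : \langle x, z_1 \rangle = b_1\}$ and $H_2 = \{x \in \R^d : \langle x, z_2 \rangle = 0\}$, so $H_1$ is an arbitrary oriented hyperplane while $H_2$ is an oriented hyperplane through the origin. Under this identification the $(\Z/2)^2$-action from Section~\ref{sec:two-spheres} is just orientation reversal: $g_1$ swaps $H_1^+$ and $H_1^-$, while $g_2$ swaps $H_2^+$ and $H_2^-$. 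Consequently the continuous function $(z_1, b_1, z_2) \mapsto \rho_{\mu_i}(H_1, H_2)$ is negated by each of $g_1$ and $g_2$, and fixed by~$g_1 g_2$.

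Next I would define the closed sets
\[
    A_i = \{(z_1, b_1, z_2) \in S^d \times S^{d-1} \, : \, \rho_{\mu_i}(H_1, H_2) \le -c\}.
\]
The equivariance of $\rho_{\mu_i}$ gives $g_1 \cdot A_i = g_2 \cdot A_i = \{\rho_{\mu_i} \ge c\}$ and $g_1 g_2 \cdot A_i = A_i$. In particular $A_i \cap g_1 \cdot A_i = A_i \cap g_2 \cdot A_i = \emptyset$, and $G \cdot A_i = \{|\rho_{\mu_i}| \ge c\}$. The hypothesis of the theorem gives $\bigcup_i G \cdot A_i = S^d \times S^{d-1}$, so the assumptions of Theorem~\ref{thm:fan-prod} are in place.

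Applying Theorem~\ref{thm:fan-prod} produces disjoint $S, T \subset [m]$, functions $\alpha \colon S \to \{g_1, g_2\}$ and $\beta \colon T \to \{1, g_1 g_2\}$, a Radon partition $\conv\{x_i : i \in S\} \cap \conv\{x_i : i \in T\} \ne \emptyset$, and a point in $\bigcap_{i \in S} \alpha(i) \cdot A_i \cap \bigcap_{i \in T} \beta(i) \cdot A_i$. By the collapse noted above, $\alpha(i) \cdot A_i = \{\rho_{\mu_i} \ge c\}$ for every $i \in S$ and $\beta(i) \cdot A_i = A_i = \{\rho_{\mu_i} \le -c\}$ for every $i \in T$, so the hyperplanes associated with that point satisfy the conclusion of Theorem~\ref{thm:fan-chess}.

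The main subtlety I expect is bookkeeping with the $(\Z/2)^2$-action: since both $g_1$ and $g_2$ negate $\rho_{\mu_i}$, each orbit $G \cdot A_i$ collapses onto the two level sets $\{\rho_{\mu_i} \le -c\}$ and $\{\rho_{\mu_i} \ge c\}$, and one must verify that the indexing data $\alpha$ and~$\beta$ furnished by Theorem~\ref{thm:fan-prod} line up correctly with the two sign regimes that appear in the statement of Theorem~\ref{thm:fan-chess}. A minor point worth flagging is that restricting $H_2$ to pass through the origin still yields a conclusion of the stated form, because the theorem only asserts existence of some pair $(H_1, H_2)$ and the covering hypothesis is required to hold for all such pairs, including those with $H_2$ through the origin.
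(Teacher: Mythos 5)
Your argument is correct and follows essentially the same route as the paper: both define the sets $A_i$ as level sets of $\rho_{\mu_i}$ and apply Theorem~\ref{thm:fan-prod}. The only cosmetic difference is the sign convention (the paper takes $A_i=\{\rho_{\mu_i}\ge c\}$, which produces the conclusion with $S$ and $T$ interchanged), and you have spelled out the equivariance bookkeeping that the paper leaves implicit.
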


\begin{proof}
    Let $A_i = \{(H_1, H_2) \in S^d \times S^{d-1}\, |\, \rho_{\mu_i}(H_1,H_2) \ge c\}$ and use Theorem~\ref{thm:fan-prod}.
\end{proof}

For $m=2d-1$ no Radon pair exists and so Theorem~\ref{thm:fan-chess} implies that for masses $\mu_1, \dots, \mu_{2d-1}$ on~$\R^d$, $d$ a power of two, there are two hyperplanes~$H_1$ and~$H_2$ that form chessboard partitions of all~$\mu_i$ and so that~$H_2$ passes through the origin. In particular, we do not recover the result of Hubard and Karasev. This is unsurprising since to add another mass~$\mu_{2d}$ and trade it for the restriction that $H_2$ no longer needs to pass through the origin, Hubard and Karasev have to give a subtle argument that relies on the equivariant map having coordinates~$\rho_{\mu_i}$ that come from actual masses~$\mu_i$.

With the same changes as in the proof of Theorem~\ref{thm:fan-gen-col} to modify the proof of Theorem~\ref{thm:fan-prod}, we obtain:

\begin{thm}
\label{thm:fan-prod-col}
    Let $d=2^t$ be a power of two. Let $m$ be a positive integer, and let $X = \{x_1, \dots, x_m\} \subset \R^{2d-2}$ be a set of $m$ points. Let $A_1^{(j)}, \dots, A_m^{(j)} \subset S^d \times S^{d-1}$ be closed sets for $j \in [2d]$ such that $A_i^{(j)} \cap g_1\cdot A_i^{(k)} =\emptyset$, $A_i^{(j)} \cap g_2 \cdot A_i^{(k)} = \emptyset$ for all $i \in [m]$ and $j \ne k \in [2d]$. Further assume that $\bigcup_i G\cdot A_i^{(j)} = S^d \times S^{d-1}$ for all $j\in [2d]$. Then there are disjoint subsets $S, T \subset [m]$, functions $\alpha\colon S \to \{g_1, g_2\}$ and $\beta \colon T\to \{1, g_1g_2\}$, and an injective function $\pi\colon S\cup T \to [2d]$ such that \[\conv\{x_i\, |\, i \in S\} \cap \conv\{x_i\, |\, i \in T\} \ne \emptyset \ \text{and} \ \bigcap_{i \in S} (\alpha(i)\cdot A_i^{(\pi(i))}) \cap \bigcap_{i \in T} (\beta(i)\cdot A_i^{(\pi(i))}) \ne \emptyset.\]
\end{thm}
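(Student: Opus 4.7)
The plan is to repeat the proof of Theorem~\ref{thm:fan-prod} with exactly the modification that turns Theorem~\ref{thm:fan-gen} into Theorem~\ref{thm:fan-gen-col}: pass to the barycentric subdivision of a fine $(\Z/2)^2$-symmetric triangulation of $S^d\times S^{d-1}$ and use the dimension of a face of the underlying triangulation to decide which of the $2d$ coverings is consulted when labeling the corresponding barycentric vertex. The match $\dim(S^d\times S^{d-1})+1 = 2d$ is what makes this bookkeeping fit, and the $(\Z/2)^2$-action is dimension-preserving on faces, so the whole labeling can be chosen equivariantly.

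In detail, first I would pick $\varepsilon>0$ small enough that, for every $i\in[m]$ and every $j\ne k$ in $[2d]$, the set $A_i^{(j)}$ has distance at least $\varepsilon$ from $g_1\cdot A_i^{(k)}$ and from $g_2\cdot A_i^{(k)}$; this uses the disjointness hypothesis and compactness of $S^d\times S^{d-1}$. Fix a $(\Z/2)^2$-symmetric triangulation $\Sigma$ of $S^d\times S^{d-1}$ each of whose facets has diameter less than $\varepsilon$, and pass to its barycentric subdivision $\Sigma'$. The vertices of any face of $\Sigma'$ have pairwise distinct values of $\dim\sigma$, and the induced $(\Z/2)^2$-action on $V(\Sigma')$ is free (since the action on $\Sigma$ is free) and sends $v_\sigma$ to $v_{g\cdot\sigma}$ with $\dim(g\cdot\sigma)=\dim\sigma$. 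Now define $f\colon V(\Sigma')\to\R^{2d-1}$ by
\[
    f(v_\sigma) = \begin{cases} (x_i,1) & \text{if } v_\sigma\in A_i^{(\dim\sigma)}\cup g_1g_2\cdot A_i^{(\dim\sigma)}, \\ (-x_i,-1) & \text{if } v_\sigma\in g_1\cdot A_i^{(\dim\sigma)}\cup g_2\cdot A_i^{(\dim\sigma)}, \end{cases}
\]
choosing $i$ arbitrarily when multiple options exist, but doing so first on a fundamental domain of the free action on $V(\Sigma')$ and extending by equivariance.

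Linearly extending gives a continuous $(\Z/2)^2$-equivariant map $f\colon\Sigma'\to \R^{2d-1}$, where both generators act by negation on the target (and $g_1g_2$ acts trivially). By Theorem~\ref{thm:ramos}, equivalently reformulated as the statement that any such equivariant map to $\R^{2d-1}$ has a zero, $f$ vanishes at some point $p$ in the interior of a face of $\Sigma'$. The vertices $v_{\sigma_1},\dots,v_{\sigma_k}$ of that face carry pairwise distinct labels $\dim\sigma_\ell\in[2d]$, and by the choice of $\varepsilon$ no two of them can witness the same index $i$ with opposite signs (that would require $A_i^{(j)}$ and $g_1\cdot A_i^{(k)}$ or $g_2\cdot A_i^{(k)}$ with $j\ne k$ to meet, contradicting $\varepsilon$; the cases involving $g_1g_2\cdot A_i^{(\dim\sigma)}$ reduce to these by applying $g_1g_2$ and using that the action is by isometries). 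Lemma~\ref{lem:radon} turns the zero into disjoint subsets $S,T\subset[m]$, an injection $\pi\colon S\cup T\to[2d]$ given by $\pi(i) = \dim\sigma_\ell$ for the vertex witnessing label $i$, and orbit-selection functions $\alpha,\beta$ recording which translate of $A_i^{(\pi(i))}$ contains the respective witness vertex. Finally let $\varepsilon\to 0$ and pass to a convergent subsequence by compactness; finiteness of the combinatorial types $(S,T,\pi,\alpha,\beta)$ guarantees that some configuration occurs infinitely often, yielding the desired intersection in the limit.

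The main technical obstacle is purely bookkeeping: confirming that the equivariant extension of $f$ is well-defined despite the fact that the hypotheses leave $A_i^{(j)}\cap g_1g_2\cdot A_i^{(k)}$ unconstrained, and that the resulting injection $\pi$ and orbit assignments $\alpha$, $\beta$ fall in the ranges prescribed by the theorem statement. Both checks are directly parallel to those in the proofs of Theorems~\ref{thm:fan-prod} and~\ref{thm:fan-gen-col}; the only genuinely new ingredient is the appeal to Ramos's theorem in place of the Borsuk--Ulam theorem, which is why the hypothesis that $d$ is a power of two is inherited from Theorem~\ref{thm:ramos} and cannot be dropped without further work.
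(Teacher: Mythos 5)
Your proposal is correct and matches the paper's proof exactly: the paper dispatches Theorem~\ref{thm:fan-prod-col} with the one-line remark that it follows from the proof of Theorem~\ref{thm:fan-prod} modified in the same way the proof of Theorem~\ref{thm:fan-gen-col} modifies that of Theorem~\ref{thm:fan-gen}, which is precisely the barycentric-subdivision bookkeeping you spell out. Your checks on well-definedness of the equivariant labeling, the role of the $\varepsilon$-separation for $j\ne k$, and the reduction of the $g_1g_2$-translate cases by applying the isometry $g_1g_2$ are all sound, and the limiting argument as $\varepsilon\to 0$ is the standard compactness step.
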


By repeating the proof of Theorem~\ref{thm:fan-chess} but using Theorem~\ref{thm:fan-prod-col} instead of Theorem~\ref{thm:fan-prod} we derive the following colorful generalization:

\begin{thm}
\label{thm:fan-chess-col}
    Let $d=2^t$ be a power of two. Let $m$ be a positive integer, and let $X = \{x_1, \dots, x_m\} \subset \R^{2d-2}$ be a set of $m$ points. Let $\mu_1^{(j)}, \dots, \mu_m^{(j)}$ be masses on~$\R^d$ for $j\in [2d]$, and suppose that there is a $c>0$ such that for every pair of hyperplanes~$H_1$ and~$H_2$ and for every $j\in [2d]$ there is an $\ell\in [m]$ such that $\rho_{\mu_\ell^{(j)}}(H_1, H_2) \ge c$ or there is a $k \in [m]$ such that $\rho_{\mu_k^{(j)}}(H_1, H_2) \le -c$ but not both. Then there is a pair of hyperplanes~$H_1$ and $H_2$, disjoint subsets $S, T \subset [m]$, and an injective function $\pi\colon S \cup T \to [2d]$ such that $\conv\{x_i\, |\, i\in S\} \cap \conv\{x_i\, |\, i \in T\}\ne \emptyset$ and
    \[
    \rho_{\mu_i^{(\pi(i))}}(H_1, H_2) \ge c \ \text{for all} \ i \in S \ \text{and} \ \rho_{\mu_i^{(\pi(i))}}(H_1, H_2) \le -c \ \text{for all} \ i \in T.
    \]
\end{thm}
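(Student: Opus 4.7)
The plan is to repeat the proof of Theorem~\ref{thm:fan-chess} verbatim, invoking Theorem~\ref{thm:fan-prod-col} in place of Theorem~\ref{thm:fan-prod}. Parametrize ordered pairs of hyperplanes $(H_1, H_2)$, with $H_2$ passing through the origin, by points of $S^d \times S^{d-1}$: a point $(z_1, b_1) \in S^d \subset \R^d \times \R$ encodes $H_1 = \{x : \langle x, z_1 \rangle = b_1\}$ with oriented halfspaces $H_1^\pm$, and a point $z_2 \in S^{d-1} \subset \R^d$ encodes $H_2 = \{x : \langle x, z_2 \rangle = 0\}$ with oriented halfspaces $H_2^\pm$. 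Under this parametrization, $g_1$ sends $(z_1, b_1) \mapsto (-z_1, -b_1)$ (swapping $H_1^+ \leftrightarrow H_1^-$) while fixing $z_2$, and $g_2$ negates $z_2$ (swapping $H_2^+ \leftrightarrow H_2^-$). Consequently $\rho_\mu$ is continuous in $(H_1, H_2)$, reverses sign under each of $g_1$ and $g_2$, and is preserved by $g_1 g_2$.

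For each $i \in [m]$ and $j \in [2d]$ define the closed set
\[
A_i^{(j)} := \{(H_1, H_2) \in S^d \times S^{d-1} : \rho_{\mu_i^{(j)}}(H_1, H_2) \ge c\}.
\]
Since $G \cdot A_i^{(j)} = \{(H_1,H_2) : |\rho_{\mu_i^{(j)}}(H_1,H_2)| \ge c\}$, the covering hypothesis $\bigcup_i G \cdot A_i^{(j)} = S^d \times S^{d-1}$ for every $j$ translates to the first half of the assumption: at every $(H_1, H_2)$ and every $j$ some mass $\mu_\ell^{(j)}$ realizes $|\rho_{\mu_\ell^{(j)}}| \ge c$. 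The disjointness requirements $A_i^{(j)} \cap g_1 \cdot A_i^{(k)} = A_i^{(j)} \cap g_2 \cdot A_i^{(k)} = \emptyset$ for $j \ne k$ both reduce to the statement that no $(H_1, H_2)$ has simultaneously $\rho_{\mu_i^{(j)}} \ge c$ and $\rho_{\mu_i^{(k)}} \le -c$, which is precisely what the ``but not both'' clause of the hypothesis is intended to rule out across color classes.

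Now invoke Theorem~\ref{thm:fan-prod-col} to obtain disjoint $S, T \subset [m]$, functions $\alpha : S \to \{g_1, g_2\}$, $\beta : T \to \{1, g_1 g_2\}$, and an injective $\pi : S \cup T \to [2d]$ with $\conv\{x_i : i \in S\} \cap \conv\{x_i : i \in T\} \ne \emptyset$ and a common point $(H_1, H_2) \in \bigcap_{i \in S} \alpha(i) \cdot A_i^{(\pi(i))} \cap \bigcap_{i \in T} \beta(i) \cdot A_i^{(\pi(i))}$. Since each $\alpha(i) \in \{g_1, g_2\}$ reverses the sign of $\rho$ while each $\beta(i) \in \{1, g_1 g_2\}$ preserves it, we read off $\rho_{\mu_i^{(\pi(i))}}(H_1, H_2) \le -c$ for $i \in S$ and $\rho_{\mu_i^{(\pi(i))}}(H_1, H_2) \ge c$ for $i \in T$. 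Interchanging the labels $S \leftrightarrow T$ (together with $\alpha \leftrightarrow \beta$) places the conclusion in the form stated in the theorem. The main obstacle is verifying the cross-color disjointness from the ``but not both'' clause of the hypothesis; once this is in hand, the argument is a purely mechanical translation between the language of mass partitions and that of $(\Z/2)^2$-equivariant closed coverings of $S^d \times S^{d-1}$.
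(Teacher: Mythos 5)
Your proposal is correct and is essentially the paper's own proof: the paper simply says to repeat the proof of Theorem~\ref{thm:fan-chess} (i.e.\ take $A_i^{(j)} = \{(H_1,H_2) \in S^d\times S^{d-1} \,|\, \rho_{\mu_i^{(j)}}(H_1,H_2) \ge c\}$) with Theorem~\ref{thm:fan-prod-col} in place of Theorem~\ref{thm:fan-prod}, and your parametrization, sign bookkeeping under $g_1, g_2, g_1g_2$, and final relabeling of $S$ and $T$ spell this out accurately. Your reduction of the disjointness hypotheses to the statement that no $(H_1,H_2)$ satisfies $\rho_{\mu_i^{(j)}} \ge c$ and $\rho_{\mu_i^{(k)}} \le -c$ simultaneously (same mass index, distinct color classes) is exactly what the application of Theorem~\ref{thm:fan-prod-col} requires, and reading the ``but not both'' clause as guaranteeing this is the interpretation the paper intends (mirroring how the hypothesis of Corollary~\ref{cor:fan-ham-col} matches Theorem~\ref{thm:fan-gen-col}).
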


\section{Final remarks}
\label{sec:final}

We phrase our main results in terms of Radon-type intersection results for two reasons:
\begin{compactenum}[(1)]
    \item In this phrasing, Fan's theorem is an immediate consequence of Gale's evenness criterion and
    \item this allows for a simple transfer of results from Radon-type results, and more generally non-embeddability results for simplicial complexes, which are numerous, to other problem areas that are approached via the Borsuk--Ulam theorem. 
\end{compactenum}

One can ask more generally what are all sign patterns as in Theorem~\ref{thm:fan-gen} that can be prescribed such that at least one of them yields a non-empty intersection. More precisely, call a $\Z/2$-equivariant subcomplex $K \subset (\Delta_{m-1})^{*2}_\Delta$ \emph{Fan} (for~$S^d$) if for all closed sets $A_1, \dots, A_m \subset S^d$ with $A_i \cap (-A_i) = \emptyset$ for all $i \in [m]$ and $S^d = \bigcup_i A_i \cup \bigcup_i (-A_i)$, there are disjoint faces $\sigma$ and $\tau$ of $\Delta_{m-1}$ such that $\sigma \cup(-\tau)$ is a face of~$K$ with $\bigcap_{i\in \sigma} A_i \cap \bigcap_{i \in \tau} (-A_i) \ne \emptyset$. In the following, we will fix~$d$ and call a complex $K$ Fan if it is Fan for~$S^d$. Theorem~\ref{thm:fan-gen} shows that for any set of $m$ points in~$\R^{d-1}$ the subcomplex of $(\Delta_{m-1})^{*2}_\Delta$ consisting of the downward closure of Radon pairs $(A,B)$ is a Fan complex. This complex of Radon pairs is a sphere of dimension~$m-d-1$, and -- as a consequence of Lemma~\ref{lem:radon} -- the complex of pairs that fail to be Radon is a sphere of dimension~$d-1$. The proof of Theorem~\ref{thm:fan-gen-cont} more generally works to show:

\begin{thm}
\label{thm:zeroset}
    Let $K \subset (\Delta_{m-1})^{*2}_\Delta$ be a $\Z/2$-equivariant subcomplex. If there is a $\Z/2$-map $f\colon (\Delta_{m-1})^{*2}_\Delta \to \R^d$ with $K = f^{-1}(0)$ then $K$ is Fan.
\end{thm}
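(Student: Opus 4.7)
The plan is to reuse the partition-of-unity construction from the proof of Theorem~\ref{thm:fan-gen-cont}, but with $f$ playing the role that $\widetilde h$ played there. Concretely, given closed sets $A_1,\dots,A_m \subset S^d$ satisfying the hypotheses of the definition of a Fan complex, I would first form a partition of unity $\alpha_1,\dots,\alpha_m\colon \R P^d\to [0,1]$ subordinate to the cover $q(A_1),\dots,q(A_m)$, where $q\colon S^d\to \R P^d$ is the antipodal quotient. Lifting back and putting in signs via $\widetilde\alpha_i(x)= \alpha_i(q(x))$ if $x\in A_i$ and $\widetilde\alpha_i(x)=-\alpha_i(q(x))$ otherwise (well-defined since $A_i\cap(-A_i)=\emptyset$), I obtain an odd continuous map
\[
    \widetilde\alpha\colon S^d \to (\Delta_{m-1})^{*2}_\Delta, \quad \widetilde\alpha(x)=(\widetilde\alpha_1(x),\dots,\widetilde\alpha_m(x)).
\]

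Next I would compose with the given $\Z/2$-map to form the odd continuous map $f\circ\widetilde\alpha\colon S^d\to \R^d$. Since this map is odd, the Borsuk--Ulam theorem produces $x_0\in S^d$ with $f(\widetilde\alpha(x_0))=0$, so $\widetilde\alpha(x_0)\in f^{-1}(0)=K$.

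Now I would identify the face of the crosspolytope whose relative interior contains $\widetilde\alpha(x_0)$: by the structure of $(\Delta_{m-1})^{*2}_\Delta$, this face has the form $\sigma \cup (-\tau)$ for disjoint faces $\sigma,\tau$ of $\Delta_{m-1}$, where $\sigma = \{i\in [m]\, |\, \widetilde\alpha_i(x_0)>0\}$ and $\tau = \{i\in [m]\, |\, \widetilde\alpha_i(x_0)<0\}$. Because $K$ is a simplicial subcomplex of $(\Delta_{m-1})^{*2}_\Delta$ and contains $\widetilde\alpha(x_0)$, this minimal carrier $\sigma\cup(-\tau)$ must itself be a face of $K$. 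Finally, $\widetilde\alpha_i(x_0)>0$ forces $x_0\in A_i$ and $\widetilde\alpha_i(x_0)<0$ forces $x_0\in -A_i$, giving
\[
    x_0 \in \bigcap_{i\in\sigma} A_i \cap \bigcap_{i\in \tau}(-A_i),
\]
which verifies the definition of $K$ being Fan.

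There is essentially no hard step here; the argument is a direct adaptation of the proof of Theorem~\ref{thm:fan-gen-cont}, with the advantage that the domain of $f$ is already the crosspolytope so no intermediate map $\widetilde h$ needs to be constructed. The only point worth emphasizing is the subcomplex observation: the minimal face of the ambient crosspolytope containing the Borsuk--Ulam zero automatically lies in $K$ because $K$ is closed under taking faces and contains the point. Everything else is formal.
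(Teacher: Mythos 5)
Your proof is correct and is essentially the approach the paper sketches: the paper's proof of Theorem~\ref{thm:zeroset} says only to repeat the construction of $\widetilde\alpha$ from Theorem~\ref{thm:fan-gen-cont} and then "use the map $f$ in place of $\widetilde h$." You have spelled this out carefully, and your key observation---that $\widetilde\alpha(x_0)\in K$ together with $K$ being a subcomplex forces the minimal carrier $\sigma\cup(-\tau)$ of $\widetilde\alpha(x_0)$ to be a face of $K$---is exactly the point the paper's "conclude in the same way" compresses (note that unlike in Theorem~\ref{thm:fan-gen-cont}, there is no last-coordinate argument here to force $\lambda=\tfrac12$, but none is needed).
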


\begin{proof}
    Repeat the proof of Theorem~\ref{thm:fan-gen-cont} producing a map $\widetilde \alpha \colon S^d \to (\Delta_{m-1})^{*2}_\Delta$ from the covering by $\pm A_1, \dots, \pm A_m$. Then use the map $f$ in place of~$\widetilde h$ and conclude in the same way.
\end{proof}

The condition that $K$ is the zero-set of a $\Z/2$-equivariant map to~$\R^d$ is equivalent to the complement of $K$ in $(\Delta_{m-1})^{*2}_\Delta$ mapping $\Z/2$-equivariantly to~$S^{d-1}$. Let $K^c$ be the induced subcomplex of the barycentric subdivision of~$(\Delta_{m-1})^{*2}_\Delta$ whose vertices subdivide faces of $(\Delta_{m-1})^{*2}_\Delta$ that are not faces of~$K$. Let $h\colon K^c \to S^{d-1}$ be a $\Z/2$-equivariant map. Define $f\colon (\Delta_{m-1})^{*2}_\Delta\to \R^d$ as constantly zero on~$K$ as $h$ on~$K^c$, and otherwise interpolate linearly. Then $K = f^{-1}(0)$. Conversely, if $f\colon (\Delta_{m-1})^{*2}_\Delta\to \R^d$ is a $\Z/2$-equivariant map with $K = f^{-1}(0)$, then by restriction $f$ induces a $\Z/2$-equivariant map $K^c \to \R^d \setminus \{0\}$, whose image may be normalized to be in~$S^{d-1}$.

The Borsuk--Ulam theorem provides a simple witness for the non-existence of a $\Z/2$-equivariant map $K^c \to S^{d-1}$: A $\Z/2$-equivariant map $h\colon S^d \to K^c$. Recall that $K^c$ is a subcomplex of the barycentric subdivision of~$(\Delta_{m-1})^{*2}_\Delta$. Identify the vertex set of~$(\Delta_{m-1})^{*2}_\Delta$ with $\{\pm 1, \dots, \pm m\}$. For $v \in \{\pm 1, \dots, \pm m\}$ let $S_v$ be the subcomplex of $K^c$ consisting of all faces $\sigma$ such that $\sigma \cup \{v\}$ is a face of~$K^c$. Let $A_v = h^{-1}(S_v)$. By $\Z/2$-symmetry $-A_v = A_{-v}$. The sets $A_1, \dots, A_m \subset S^d$ witness that $K$ is not Fan: Indeed, let $\sigma$ and $\tau$ be disjoint subsets of $[m]$ with $\bigcap_{v \in \sigma} A_v \cap \bigcap_{w\in \tau} A_{-w} \ne \emptyset$. Then the intersection $\bigcap_{v \in \sigma} S_v \cap \bigcap_{w\in \tau} S_{-w}$ contains a vertex of~$K^c$. This vertex subdivides a face that contains $\sigma$ and~$-\tau$, and in particular $\sigma\cup(-\tau)$ is not a face of~$K$.
We thus have the following:

\begin{thm}
\label{thm:notFan}
    Let $K \subset (\Delta_{m-1})^{*2}_\Delta$ be a $\Z/2$-equivariant subcomplex. If there is a $\Z/2$-map $K^c \to S^{d-1}$ then $K$ is Fan. If there is a $\Z/2$-map $S^d \to K^c$ then $K$ is not Fan.
\end{thm}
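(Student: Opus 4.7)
The plan is to prove the two implications of Theorem~\ref{thm:notFan} separately, essentially formalizing the discussion that precedes the statement. For the first implication, I would reduce to Theorem~\ref{thm:zeroset} by manufacturing from a $\Z/2$-equivariant map $g\colon K^c \to S^{d-1}$ a $\Z/2$-equivariant map $f\colon (\Delta_{m-1})^{*2}_\Delta \to \R^d$ with $K = f^{-1}(0)$. I would work inside the barycentric subdivision of~$(\Delta_{m-1})^{*2}_\Delta$. Its vertices split into two $\Z/2$-invariant classes: those subdividing faces that lie in $K$, which together form a subdivision of $K$, and the remaining ones, which by definition form $K^c$. Set $f$ to be identically zero on the subdivision of~$K$, equal to the composition $K^c \xrightarrow{g} S^{d-1} \hookrightarrow \R^d$ on~$K^c$, and interpolate linearly on the remaining ``mixed'' simplices of the subdivision. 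Each such mixed simplex is the join of a simplex lying in the subdivision of~$K$ and a simplex lying in~$K^c$, with consistent $\Z/2$-action, so the linear interpolation is well-defined and $\Z/2$-equivariant; moreover $f$ vanishes on a mixed simplex only at the subdivision-of-$K$ vertices, giving $f^{-1}(0) = K$ exactly. Theorem~\ref{thm:zeroset} then certifies that $K$ is Fan.

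For the second implication, the witness sets are essentially spelled out in the excerpt. Given a $\Z/2$-equivariant map $h\colon S^d \to K^c$, I would set $A_v = h^{-1}(S_v)$ for each $v \in \{\pm 1,\dots,\pm m\}$. Equivariance of~$h$ together with the evident identity $-S_v = S_{-v}$ yields $-A_v = A_{-v}$. Three properties are needed to show that the family $A_1,\dots,A_m$ witnesses that $K$ is not Fan: disjointness of $A_v$ and $A_{-v}$ (equivalently, $S_v \cap S_{-v} = \emptyset$, which holds because no face of $(\Delta_{m-1})^{*2}_\Delta$ contains both $v$ and $-v$); that $A_1,\dots,A_m$ together with their antipodes cover $S^d$ (equivalently, $K^c = \bigcup_v S_v$, which holds because every vertex of~$K^c$ subdivides some face $\sigma_0$ and therefore lies in $S_v$ for every $v \in \sigma_0$); and finally the key combinatorial obstruction, that whenever disjoint $\sigma,\tau \subset [m]$ satisfy $\bigcap_{v\in\sigma} A_v \cap \bigcap_{w\in\tau} A_{-w} \neq \emptyset$, the set $\sigma \cup (-\tau)$ is \emph{not} a face of~$K$. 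For this last point I would take a vertex $u$ of $K^c$ inside $\bigcap_{v\in\sigma}S_v \cap \bigcap_{w\in\tau}S_{-w}$: the face $\sigma_0$ of $(\Delta_{m-1})^{*2}_\Delta$ subdivided by~$u$ contains $\sigma \cup (-\tau)$, and since $u \in K^c$ this face is not in~$K$, so downward closure forces $\sigma \cup (-\tau) \notin K$.

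The routine bookkeeping around the barycentric subdivision is where the one genuine subtlety lives. I would need to be careful that the mixed-simplex description used to define~$f$ in the first half is correct, and in the second half that the star-subcomplexes $S_v$ interact correctly with the antipodal action and with downward closure of~$K$. One minor edge case is that $S_v$ can be empty if $\{v\}$ itself lies in~$K$; this harmlessly removes $A_v$ from the covering. Once these points are resolved, both implications follow directly from the definitions together with Theorem~\ref{thm:zeroset}, and no new topological input is required.
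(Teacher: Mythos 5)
Your write-up reproduces the paper's own justification (the theorem is stated as a consequence of the two paragraphs that precede it, with no separate proof environment), and the first implication is fine provided ``interpolate linearly'' is understood in the join parameterization you describe: on a mixed simplex decomposed as the join of its subdivision-of-$K$ face and its $K^c$-face, set $f(\lambda p + (1-\lambda)q) = (1-\lambda)\,g(q)$, which vanishes exactly where the join parameter is $1$. (If one instead interpolated barycentrically from vertex values, $f^{-1}(0)$ could be strictly larger than $|K|$, since $0$ can lie in the convex hull of a few points of $S^{d-1}$; the join reading avoids this.) With $f^{-1}(0)=|K|$, Theorem~\ref{thm:zeroset} applies.

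The second implication has a genuine gap at the last step, and it is the same gap present in the paper's discussion. From ``$\sigma\cup(-\tau)\subseteq\sigma_0$ and $\sigma_0\notin K$'' you conclude that ``downward closure forces $\sigma\cup(-\tau)\notin K$.'' This is backwards. Downward closure of $K$ says that subfaces of $K$-faces lie in $K$; equivalently the set of non-$K$-faces is \emph{upward} closed, so knowing $\sigma_0\notin K$ tells you nothing about a proper subface $\sigma\cup(-\tau)$. The step is not merely under-explained: as stated the implication is false. Take $m=4$, $d=1$, and let $K$ be the full $1$-skeleton of $(\Delta_3)^{*2}_\Delta$. Then $K$ is Fan for $S^1$: if $A_i\cap(-A_j)=\emptyset$ for all $i,j$, the closed sets $\bigcup_i A_i$ and $\bigcup_i(-A_i)$ would be disjoint, nonempty, and cover $S^1$, contradicting connectedness, so some $A_i\cap(-A_j)\neq\emptyset$ with $i\neq j$ and $\{i,-j\}\in K$. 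Yet $K^c$ is the induced subcomplex of the barycentric subdivision on the $3$- and $4$-element faces of $(\Delta_3)^{*2}_\Delta$, a connected graph with a free $\Z/2$-action (the sixteen $4$-element faces form a connected hypercube graph through shared $3$-element faces), so a $\Z/2$-map $S^1\to K^c$ exists by running along a path from any vertex to its antipode and reflecting. What your argument actually needs, and what neither you nor the paper states, is an extra hypothesis: that the \emph{mixed} faces of $K$ (those containing both a positive and a negative vertex) are upward closed among mixed faces. Under that hypothesis, $\sigma\cup(-\tau)\in K$ together with $\sigma\cup(-\tau)\subseteq\sigma_0$ and $\sigma_0$ mixed would force $\sigma_0\in K$, contradicting $u\in K^c$, and the conclusion follows. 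That condition holds for Radon complexes and for the complex appearing in the theorem that the paper deduces from this one, so the downstream applications survive, but as written this step of your proof is a non sequitur.
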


A space $X$ with a free $\Z/2$-action is called \emph{tidy} if for the largest~$d$ such that there is a $\Z/2$-equivariant map $S^d \to X$, there is a $\Z/2$-equivariant map $X\to S^d$. By Theorem~\ref{thm:notFan}, Theorem~\ref{thm:zeroset} characterizes Fan complexes among all complexes~$K \subset (\Delta_{m-1})^{*2}_\Delta$ where $K^c$ is a tidy space. Non-tidy spaces exist~\cite{csorba2005non, matsushita2014some}.

Lastly, we show that Theorem~\ref{thm:fan-gen} is optimal in the sense that it no longer holds if a single Radon pair is disallowed.

\begin{thm}
    Let $X = \{x_1, \dots, x_m\} \subset \R^{d-1}$ be a generic point set. Let $(A,B)$ be a minimal Radon pair for~$X$. Let $K \subset (\Delta_{m-1})^{*2}_\Delta$ consist of all Radon pairs for~$X$ with the exception of $(A,B)$ and~$(B,A)$. Then $K$ is not Fan.
\end{thm}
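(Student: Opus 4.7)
The plan is to construct a counterexample cover directly: antipodally symmetric closed sets $A_1,\dots,A_m\subset S^d$ with $A_i\cap(-A_i)=\emptyset$ and $\bigcup_i A_i\cup\bigcup_i(-A_i)=S^d$ such that the only Radon pairs $(\sigma,\tau)$ of~$X$ with $\bigcap_{i\in\sigma} A_i\cap\bigcap_{j\in\tau}(-A_j)\neq\emptyset$ are $(A,B)$ and $(B,A)$. Since these are excluded from~$K$, no face of~$K$ can serve as a Fan witness for this cover, establishing that $K$ is not Fan.

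I would start from the $\Z/2$-equivariant map $\widetilde h\colon(\Delta_{m-1})^{*2}_\Delta\to\R^d$ of the proof of Theorem~\ref{thm:fan-gen}, which sends $+i\mapsto(x_i,1)$ and $-j\mapsto(-x_j,-1)$ and extends linearly on faces. By Lemma~\ref{lem:radon}, $\widetilde h^{-1}(0)$ decomposes, over the Radon pairs $(\sigma,\tau)$ of~$X$, as a disjoint union of affine subsets of dimension $|\sigma|+|\tau|-1-d$ lying in the relative interiors of the faces $\sigma\cup(-\tau)$; minimality of $(A,B)$ and genericity of~$X$ imply that the component in $(A,B)^\circ$ is a single point $\widetilde p$, where $\widetilde h$ is a local $\Z/2$-equivariant homeomorphism onto a neighborhood of $0\in\R^d$, of local degree~$\pm1$. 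Next, picking an odd map $\gamma\colon S^d\to\R^d$ with only one antipodal pair of simple zeros at $\pm e_{d+1}$ (for instance $\gamma(x_1,\dots,x_{d+1})=(x_1,\dots,x_d)$), I would build a $\Z/2$-equivariant continuous map $\widetilde\alpha\colon S^d\to(\Delta_{m-1})^{*2}_\Delta$ that coincides with the local inverse of $\widetilde h$ near $\pm e_{d+1}$, so that $\widetilde\alpha(\pm e_{d+1})=\pm\widetilde p$, and that is extended equivariantly over the rest of $S^d$ in sufficiently general position that $\widetilde\alpha(S^d)\cap\widetilde h^{-1}(0)=\{\widetilde p,-\widetilde p\}$. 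Then $\widetilde h\circ\widetilde\alpha\colon S^d\to\R^d$ is an odd continuous map whose only zeros are $\pm e_{d+1}$, each of local degree $\pm1$. Exactly as in the proof of Theorem~\ref{thm:fan-gen-cont}, the cover $A_i=\{x\in S^d:\widetilde\alpha(x)_i\geq\epsilon\}$ for small $\epsilon>0$ is valid and its only Radon Fan witnesses are $(A,B)$ and $(B,A)$.

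The main difficulty lies in the equivariant generic avoidance step. For any Radon pair $(\sigma,\tau)$ with $k:=|\sigma|+|\tau|<m$, the component of $\widetilde h^{-1}(0)$ in the face $\sigma\cup(-\tau)$ has dimension $k-1-d$ inside a face of dimension $k-1<m-1$, and a standard transversality argument shows that a $\Z/2$-equivariant perturbation of $\widetilde\alpha$ can be made to miss it. The borderline situation is that of full-partition Radon pairs with $k=m$, whose zero components have dimension $m-1-d$ inside top-dimensional faces of the crosspolytope; for $m\geq d+2$ one can constrain $\widetilde\alpha$ to land in the $(m-2)$-skeleton of $(\Delta_{m-1})^{*2}_\Delta$ away from $\pm\widetilde p$, which automatically avoids these full-dimensional face interiors, and then further perturbation handles the finitely many isolated zeros from the remaining minimal Radon pairs. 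The degenerate case $m=d+1$ is trivial: $K$ then contains no Radon pair whatsoever, so the Fan condition fails vacuously.
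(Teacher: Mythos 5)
Your central claim --- that if $\widetilde\alpha\colon S^d\to(\Delta_{m-1})^{*2}_\Delta$ is equivariant and $\widetilde\alpha(S^d)\cap\widetilde h^{-1}(0)=\{\pm\widetilde p\}$, then the thresholded cover $A_i=\{x:\widetilde\alpha(x)_i\ge\epsilon\}$ has $(A,B)$ and $(B,A)$ as its only Radon witnesses --- does not follow, and this is a genuine gap rather than a technicality. A point $x$ witnesses a Radon pair $(\sigma,\tau)$ for your cover as soon as $\widetilde\alpha(x)_i\ge\epsilon$ for all $i\in\sigma$ and $\widetilde\alpha(x)_j\le-\epsilon$ for all $j\in\tau$; this says only that the signed $\epsilon$-support of $\widetilde\alpha(x)$ contains the face $\sigma\cup(-\tau)$, and has nothing to do with $\widetilde h(\widetilde\alpha(x))$ being zero. (In the proof of Theorem~\ref{thm:fan-gen-cont} the implication runs the other way: the partition of unity is subordinate to a \emph{given} cover, so a zero of $\widetilde h\circ\widetilde\alpha$ produces a witness; reversing the construction by thresholding severs that link.) The ``bad'' set for a Radon pair $(\sigma,\tau)$ of~$K$, namely the points of the crosspolytope whose coordinates on $\sigma\cup\tau$ all exceed $\epsilon$ in absolute value with the correct signs, is full-dimensional inside every face containing $\sigma\cup(-\tau)$ --- for instance every point near the zero $\widetilde q$ of another minimal Radon face lies in it. Full-dimensional (low-codimension) sets cannot be avoided by equivariant general position, so neither your transversality step nor the restriction to the $(m-2)$-skeleton prevents such witnesses; the latter only avoids facet interiors, while Radon faces of all dimensions from $d$ up to $m-2$ remain. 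What is actually needed is that the image of the equivariant sphere lie in the region whose $\epsilon$-supports are non-faces of~$K$, and whether such an equivariant map exists is exactly the homotopy-theoretic question, not a perturbation question. The paper resolves it by exhibiting a concrete equivariant $d$-sphere inside the complex of non-faces: by minimality of $(A,B)$ and genericity ($|A\cup B|=d+1$), every pair $(C,D)$ of disjoint subsets of $A\cup B$ is either non-Radon or equal to $\pm(A,B)$, so the boundary of the crosspolytope on $A\cup B$ (a $d$-sphere) sits inside $K^c$, and Theorem~\ref{thm:notFan} converts this into the desired cover via preimages of stars.

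A secondary problem is the construction of $\widetilde\alpha$ itself. The zero set $\widetilde h^{-1}(0)$ has codimension exactly $d$ in $(\Delta_{m-1})^{*2}_\Delta$, so a generic equivariant map from $S^d$ meets it in isolated points; you cannot remove the unwanted intersection points by ``sufficiently general position'' but only by cancelling them in pairs equivariantly, which is again an obstruction-theoretic argument you have not supplied (and which, even if carried out, would not close the main gap above). Your remark on the case $m=d+1$ is fine, but the heart of the theorem is the case $m\ge d+2$, where your argument does not go through.
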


\begin{proof}
    We identify the vertex set of~$(\Delta_{m-1})^{*2}_\Delta$ with $\{\pm x_1, \dots, \pm x_m\}$. Let $C$ and $D$ be two disjoint subsets of $A \cup B$. Then since $(A,B)$ is a minimal Radon pair, no other partition of $A\cup B$ is a Radon pair, and so $(C,D)$ is either not a Radon pair or $(C,D) \in \{(A,B), (B,A)\}$. In particular, $(C,D)$ is a vertex of~$K^c$, and $K^c$ contains the induced subcomplex of all these vertices in the barycentric subdivision of~$(\Delta_{m-1})^{*2}_\Delta$. Since $A\cup B$ involves $d+1$ vertices, this subcomplex is isomorphic to the barycentric subdivision of~$(\Delta_d)^{*2}_\Delta$, which is a $d$-sphere. Theorem~\ref{thm:notFan} finishes the proof.
\end{proof}

\section*{Acknowledgments}
The authors would like to thank Pablo Sober\'{o}n for helpful comments.

% \bibliographystyle{amsplain}
% \bibliography{refs.bib}

\end{document}